\newtheorem{theorem}{Theorem}[section]
\newtheorem{lemma}[theorem]{Lemma}
\newtheorem{proposition}[theorem]{Proposition}
\newtheorem{corollary}[theorem]{Corollary}
\theoremstyle{definition}
\newtheorem{example}{Example}[section]
\newtheorem{assumption}{Assumption}[section]
\theoremstyle{remark}
\newtheorem{remark}{Remark}[section]
\numberwithin{equation}{section}
\begin{document}

\title[Proximal Alternating direction method of multipliers]
{On convergence rates of proximal alternating direction method of multipliers}

\author{Qinian Jin}

\address{Mathematical Sciences Institute, Australian National
University, Canberra, ACT 2601, Australia}
\email{qinian.jin@anu.edu.au} \curraddr{}


\def\ep{\varepsilon}
\date{March 3, 2023}

\begin{abstract}
In this paper we consider from two different aspects the proximal alternating direction method 
of multipliers (ADMM) in Hilbert spaces. We first consider the application of the proximal ADMM 
to solve well-posed linearly constrained two-block separable convex minimization problems in 
Hilbert spaces and obtain new and improved non-ergodic convergence rate results, including linear 
and sublinear rates under certain regularity conditions. We next consider proximal ADMM as a regularization 
method for solving linear ill-posed inverse problems in Hilbert spaces. When the data is 
corrupted by additive noise, we establish, under a benchmark source condition, a convergence 
rate result in terms of the noise level when the number of iteration is properly chosen.
\end{abstract}

\keywords{proximal alternating direction method of multipliers, linearly constrained convex programming, linear inverse problems, convergence rates}

\def\p{\partial}
\def\d{\delta}
\def\l{\langle}
\def\r{\rangle}
\def\a{\alpha}
\def\la{\lambda}
\def\ep{\varepsilon}
\def\V{\mathcal V}
\def\X{\mathcal X}
\def\Y{\mathcal Y}
\def\Z{\mathcal Z}
\def\H{\mathcal H}
\def\N{\mathcal N}
\def\C{\mathcal C}

\maketitle

\section{\bf Introduction}
\setcounter{equation}{0}

The alternating direction method of multipliers (ADMM) was introduced and developed in 
the 1970s by Glowinski, Marrocco \cite{GM1975} and Gabay, Mercier \cite{GM1976} for the 
numerical solutions of partial differential equations. Due to its decomposability and 
superior flexibility, ADMM and its variants have gained renewed interest in recent 
years and have been widely used for solving large-scale optimization problems that 
arise in signal/image processing, statistics, machine learning, inverse problems and 
other fields, see \cite{BPCPE2011,GBO2010,JJLW2016}. Because of their popularity, 
many works have been devoted to the analysis of ADMM and its variants, see 
\cite{BPCPE2011,DY2016,EP1992,G1983,HY2012,LM1979,ZBO2011} for instance.
In this paper we will devote to deriving convergence rates of ADMM in two aspects: 
its applications to solve well-posed convex optimization problems and its use to 
solve linear ill-posed inverse problems as a regularization method. 

In the first part of this paper we consider ADMM for solving linearly constrained two-block 
separable convex minimization problems. Let $\X$, $\Y$ and $\Z$ be real Hilbert spaces with 
possibly infinite dimensions. We consider the convex minimization problem of the form
\begin{align}\label{prob}
\begin{split}
\mbox{minimize } & \quad H(x, y) := f(x) + g(y) \\
\mbox{subject to } & \quad A x + By =c,
\end{split}
\end{align}
where $c \in \Z$, $A: \X \to \Z$ and $B: \Y \to \Z$ are bounded linear operators, 
and $f: \X\to (-\infty, \infty]$ and $g: \Y \to (-\infty, \infty]$ are proper, 
lower semi-continuous, convex functions. The classical ADMM solves (\ref{prob}) 
approximately by constructing an iterative sequence via alternatively 
minimizing the augmented Lagrangian function 
$$
{\mathscr L}_\rho (x, y, z) := f(x) + g(y) + \l \la, A x + B y -c \r + \frac{\rho}{2} \|A x + B y - c\|^2
$$
with respect to the primal variables $x$ and $y$ and then updating the dual 
variable $\la$; more precisely, starting from an initial guess $y^0\in \Y$ 
and $\la^0\in \Z$, an iterative sequence $\{(x^k, y^k, \la^k)\}$ is defined by 
\begin{align}\label{admm}
\begin{split}
x^{k+1} &= \arg\min_{x\in \X} \left\{f(x) + \l \la^k, A x\r + \frac{\rho}{2} \|A x+B y^k -c\|^2 \right\},\\
y^{k+1} &= \arg\min_{y\in \Y} \left\{g(y) + \l \la^k, B y\r + \frac{\rho}{2} \|A x^{k+1}+By -c\|^2 \right\},\\
\la^{k+1} &= \la^k + \rho (A x^{k+1}+B y^{k+1}-c),
\end{split}
\end{align}
where $\rho>0$ is a given penalty parameter. The implementation of (\ref{admm}) requires 
to determine $x^{k+1}$ and $y^{k+1}$ by solving two convex minimization problems during 
each iteration. Although $f$ and $g$ may have special structures so that their proximal 
mappings are easy to be determined, solving the minimization problems in (\ref{admm}) 
in general is highly nontrivial due to the appearance of the terms $\|A x\|^2 $ and 
$\|B y\|^2$. In order to avoid this implementation issue, one may consider to add 
certain proximal terms to the $x$-subproblem and $y$-subproblem in (\ref{admm}) to remove 
the terms $\|A x\|^2$ and $\|B y\|^2$. For any bounded linear positive semi-definite 
self-adjoint operator $D$ on a real Hilbert space $\H$, we will use the notation
$$
\|u\|_D^2 : = \l z, D u\r, \quad \forall u \in \H.
$$
By taking two bounded linear positive semi-definite self-adjoint operators $P: \X \to \X$ 
and $Q: \Y \to \Y$, we may add the terms $\frac{1}{2} \|x-x^k\|_P^2$ and 
$\frac{1}{2} \|y-y^k\|_Q^2$ to the $x$- and $y$-subproblems in (\ref{admm}) respectively 
to obtain the following proximal alternating direction method of multipliers 
(\cite{B2017,DY2016b,HY2012,HY2015,JJLW2017,ZBO2011})
\begin{align}\label{alg1}
\begin{split}
x^{k+1} &= \arg\min_{x\in \X} \left\{ f(x) + \l \la^k, A x\r + \frac{\rho}{2} \left\|A x + B y^k -c \right\|^2
+ \frac{1}{2} \|x-x^k\|_P^2\right\},\\
y^{k+1} &= \arg\min_{y\in \Y} \left\{g(y) + \l \la^k, B y\r + \frac{\rho}{2} \left\|A x^{k+1}+By-c \right\|^2
+ \frac{1}{2} \|y-y^k\|_Q^2\right\},\\[1ex]
\lambda^{k+1} &= \la^k + \rho (A x^{k+1} + B y^{k+1} -c).
\end{split}
\end{align}
\noindent 
The advantage of (\ref{alg1}) over (\ref{admm}) is that, with wise choices of $P$ and $Q$, 
it is possible to remove the terms $\|A x\|^2$ and $\|B y\|^2$ and thus make the determination 
of $x^{k+1}$ and $y^{k+1}$ much easier. 

In recent years, various convergence rate results have been established for ADMM and its 
variants in either ergodic or non-ergodic sense. In \cite{HY2012,LMZ2015}
the ergodic convergence rate 
\begin{align}\label{ergodic}
|H(\bar x^k, \bar y^k)-H_*| = O\left(\frac{1}{k}\right)  \quad \mbox{and} \quad 
\|A \bar x^k+B \bar y^k -c\|= O \left(\frac{1}{k}\right)
\end{align}
has been derived in terms of the objective error and the constraint error, where $H_*$ 
denotes the minimum value of (\ref{prob}), $k$ denotes the number of iteration, and 
$$
\bar x^k := \frac{1}{k} \sum_{j=1}^k x^j \qquad \mbox{and} \qquad 
\bar y^k := \frac{1}{k} \sum_{j=1}^k y^j
$$
denote the ergodic iterates of $\{x^k\}$ and $\{y^k\}$ respectively; see also 
\cite[Theorem 15.4]{B2017}. A criticism on ergodic result is that it may fail to 
capture the feature of the sought solution of the underlying problem because 
ergodic iterate has the tendency to average out the expected property and thus 
destroy the feature of the solution. This is in particular undesired in sparsity
optimization and low-rank learning. In contrast, the non-ergodic iterate tends 
to share structural properties with the solution of the underlying problem. 
Therefore, the use of non-ergodic iterates becomes more favorable in practice.
In \cite{HY2015} a non-ergodic convergence rate has been derived for the 
proximal ADMM (\ref{alg1}) with $Q=0$ and the result reads as  
\begin{align}\label{ner0}
\|x^{k+1} - x^k\|_P^2 + \|B(y^{k+1}-y^k)\|^2 + \|\la^{k+1}-\la^k\|^2 
= o \left(\frac{1}{k}\right).
\end{align}
By exploiting the connection with the Douglas-Rachford splitting algorithm, 
the non-ergodic convergence rate
\begin{align}\label{ner1}
|H(x^k, y^k)-H_*| = o\left(\frac{1}{\sqrt{k}}\right)  \quad \mbox{and} \quad 
\|A x^k+B y^k -c\|= o \left(\frac{1}{\sqrt{k}}\right)
\end{align}
in terms of the objective error and the constraint error has been established 
in \cite{DY2016} for the ADMM (\ref{admm}) and an example has been provided to 
demonstrate that the estimates in (\ref{ner1}) are sharp. However, the derivation 
of (\ref{ner1}) in \cite{DY2016} relies on some unnatural technical conditions
involving the convex conjugate of $f$ and $g$, see Remark \ref{Rk2.1}. 
Note that the estimate (\ref{ner0}) implies the second estimate in (\ref{ner1}), 
however it does not imply directly the first estimate in (\ref{ner1}).  In Section 
\ref{sect2} we will show, by a simpler argument, that similar estimate as in 
(\ref{ner0}) can be established for the proximal ADMM (\ref{alg1}) with arbitrary 
positive semi-definite $Q$. Based on this result and some additional properties of 
the method, we will further show that the non-ergodic rate (\ref{ner1}) holds for the 
proximal ADMM (\ref{alg1}) with arbitrary positive semi-definite $P$ and $Q$. 
Our result does not require any technical conditions as assumed in \cite{DY2016}. 

In order to obtain faster convergence rates for the proximal ADMM (\ref{alg1}), 
certain regularity conditions should be imposed. In finite dimensional situation, 
a number of linear convergence results have been established. In \cite{DY2016b} some 
linear convergence results of the proximal ADMM have been provided under a number 
of scenarios involving the strong convexity of $f$ and/or $g$, the Lipschitz 
continuity of $\nabla f$ and/or $\nabla g$, together with further full row/column 
rank assumptions on $A$ and/or $B$. Under a bounded metric subregularity condition, 
in particular under the assumption that both $f$ and $g$ are convex piecewise 
linear-quadratic functions, a global linear convergence rate has been established 
in \cite{YH2016} for the proximal ADMM (\ref{alg1}) with 
\begin{align}\label{yhc}
P:=\tau_1 I - \rho A^*A \succ 0 \quad \mbox{and} \quad Q:= \tau_2 I - \rho B^*B\succ 0,
\end{align}
where $A^*$ and $B^*$ denotes the adjoints of $A$ and $B$ respectively; 
the condition (\ref{yhc}) plays an essential role in the convergence analysis in 
\cite{YH2016}. We will derive faster convergence rates for the proximal ADMM (\ref{alg1}) 
in the general Hilbert space setting. To this end, we need first to consider 
the weak convergence of $\{(x^k, y^k, \la^k)\}$ and demonstrate that every 
weak cluster point of this sequence is a KKT point of (\ref{prob}). 
This may not be an issue in finite dimensions. However, this is nontrivial 
in infinite dimensional spaces because extra care is required to dealing with 
weak convergence. In \cite{BS2017} the weak convergence of the proximal ADMM 
(\ref{alg1}) has been considered by transforming the method into a proximal point 
method and the result there requires restrictive conditions, see 
\cite[Lemma 3.4 and Theorem 3.1]{BS2017}. These restrictive 
conditions have been weakened later in \cite{Sun2019} by using machinery from 
the maximal monotone operator theory. We will explore the structure of the 
proximal ADMM and show by an elementary argument that every weak cluster point 
of $\{(x^k, y^k, \la^k)\}$ is indeed a KKT point of (\ref{prob}) without any 
additional conditions. We will then consider the linear convergence of the 
proximal ADMM under a bounded metric subregularity condition and obtain the 
linear convergence for any positive semi-definite $P$ and $Q$; in particular, we 
obtain the linear convergence of $|H(x^k, y^k) - H_*|$ and $\|A x^k + B y^k - c\|$.
We also consider deriving convergence rates under a bounded H\"{o}lder metric 
subregularity condition which is weaker than the bounded metric subregularity. 
This weaker condition holds if both $f$ and $g$ are semi-algebraic functions and 
thus a wider range of applications can be covered. We show that, under a bounded 
H\"{o}lder metric subrigularity condition, among other things the convergence 
rates in (\ref{ner1}) can be improved to 
$$
\|A x^k + B y^k - c\| = O(k^{-\beta}) \quad \mbox{ and } \quad 
|H(x^k, y^k) - H_*| = O(k^{-\beta})
$$
for some number $\beta >1/2$; the value of $\beta$ depends on the properties 
of $f$ and $g$. To further weaken the bounded (H\"{o}lder) metric subregularity 
assumption, we introduce an iteration based error bound condition which is an 
extension of the one in \cite{LYZZ2018} to the general proximal ADMM (\ref{alg1}). 
It is interesting to observe that this error bound condition holds under any 
one of the scenarios proposed in \cite{DY2016b}. Hence, we provide a unified analysis 
for deriving convergence rates under the bounded (H\"{o}lder) metric subregularity 
or the scenarios in \cite{DY2016b}. Furthermore, we extend the scenarios in 
\cite{DY2016b} to the general Hilbert space setting and demonstrate that some 
conditions can be weakened and the convergence result can be strengthened; 
see Theorem \ref{thm2.11}. 

In the second part of this paper, we consider using ADMM as a regularization method to 
solve linear ill-posed inverse problems in Hilbert spaces. Linear 
inverse problems have a wide range of applications, including medical imaging, 
geophysics, astronomy, signal processing, and more (\cite{EHN1996,G1984,N2001}).
We consider linear inverse problems of the form
\begin{align}\label{ip1.1}
Ax = b, \quad x\in \C, 
\end{align}
where $A : \X \to \H$ is a compact linear operator between two Hilbert spaces $\X$ 
and $\H$, $\C$ is a closed convex set in $\X$, and $b \in \mbox{Ran}(A)$, the range 
of $A$. In order to find a solution of (\ref{ip1.1}) with desired properties, a priori 
available information on the sought solution should be incorporated into the problem. 
Assume that, under a suitable linear transform $L$ from $\X$ to another Hilbert 
spaces $\Y$ with domain $\mbox{dom}(L)$, the feature of the sought solution can be 
captured by a proper convex penalty function $f : \Y\to (-\infty, \infty]$. One may 
consider instead of (\ref{ip1.1}) the constrained optimization problem
\begin{align}\label{ip1.2}
\min\{f (Lx): Ax = b, \ x\in \C, \ x\in \mbox{dom}(L)\}. 
\end{align}
A challenging issue related to the numerical resolution of (\ref{ip1.2}) is 
its ill-posedness in the sense that the solution of (\ref{ip1.2}) does not depend 
continuously on the data and thus a small perturbation on data can lead to a large 
deviation on solutions. In practical applications, the exact data $b$ is usually 
unavailable, instead only a noisy data $b^\d$ is at hand with 
$$
\|b^\d - b\| \le \d
$$
for some small noise level $\d>0$. To overcome ill-posedness, regularization methods 
should be introduced to produce reasonable approximate solutions; one may refer to 
\cite{BO2004,EHN1996,Jin2022,RS2006} for various regularization methods. 

The common use of ADMM to solve (\ref{ip1.2}) with noisy data $b^\d$ first 
considers the variational regularization 
\begin{align}\label{vr.admm}
\min_{x\in \C} \left\{\frac{1}{2} \|A x - b^\d\|^2 + \a f(Lx)\right\},
\end{align}
then uses the splitting technique to rewrite (\ref{vr.admm}) into the form (\ref{prob}), 
and finally applies the ADMM procedure to produce approximate solutions. The parameter 
$\a>0$ is the so-called regularization parameter which should be adjusted carefully to 
achieve reasonable good performance; consequently one has to run ADMM to solve (\ref{vr.admm}) 
for many different values of $\a$, which can be time consuming.

In \cite{JJLW2016,JJLW2017} the ADMM has been considered to solve (\ref{ip1.2}) 
directly to reduce the computational load. Note that (\ref{ip1.2}) can be written as  
\begin{align*}
\left\{\begin{array}{lll}
\min f(y) + \iota_{\C}(x)\\
\mbox{subject to } A z = b, \ L z - y = 0, \ z - x = 0, \ z \in \mbox{dom}(L),
\end{array}\right.
\end{align*}
where $\iota_{\C}$ denotes the indicator function of $\C$. With the noisy data $b^\d$ 
we introduce the augmented Lagrangian function 
\begin{align*}
{\mathscr L}_{\rho_1, \rho_2, \rho_3}(z, y, x, \la, \mu, \nu) 
&:= f(y) + \iota_{\C}(x) + \l \la, A z - b^\d\r + \l \mu, L z - y\r + \l \nu, z - x \r \\
& \quad \, + \frac{\rho_1}{2} \|A z - b^\d\|^2 + \frac{\rho_2}{2} \|L z - y\|^2 
+ \frac{\rho_3}{2} \|z -x\|^2,
\end{align*}
where $\rho_1$, $\rho_2$ and $\rho_3$ are preassigned positive numbers. The proximal 
ADMM proposed in \cite{JJLW2017} for solving (\ref{ip1.2}) then takes the form 
\begin{align}\label{PADMM2}
\begin{split}
& z^{k+1} = \arg\min_{z\in \mbox{dom}(L)} 
\left\{{\mathscr L}_{\rho_1, \rho_2, \rho_3}(z, y^k, x^k, \la^k, \mu^k, \nu^k) + \frac{1}{2} \|z-z^k\|_Q^2\right\},\\
& y^{k+1} = \arg\min_{y\in \Y} 
\left\{{\mathscr L}_{\rho_1, \rho_2, \rho_3}(z^{k+1}, y, x^k, \la^k, \mu^k, \nu^k) \right\},\\
& x^{k+1} = \arg\min_{x\in \X} 
\left\{{\mathscr L}_{\rho_1, \rho_2, \rho_3}(z^{k+1}, y^{k+1}, x, \la^k, \mu^k, \nu^k)\right\}, \\
& \la^{k+1} = \la^k + \rho_1 (A z^{k+1} - b^\d), \\
& \mu^{k+1} = \mu^k + \rho_2 (L z^{k+1} - y^{k+1}), \\
& \nu^{k+1} = \nu^k + \rho_3 (z^{k+1} - x^{k+1}),
\end{split}
\end{align}
where $Q$ is a bounded linear positive semi-definite self-adjoint operator. 
The method (\ref{PADMM2}) is not a 3-block ADMM. Note that the 
variables $y$ and $x$ are not coupled in 
${\mathscr L}_{\rho_1, \rho_2, \rho_3}(z, y, x, \la, \mu, \nu)$. Thus, $y^{k+1}$ 
and $x^{k+1}$ can be updated simultaneously, i.e. 
\begin{align*}
(y^{k+1}, x^{k+1}) = \arg\min_{y\in \Y, x \in \X} 
\left\{{\mathscr L}_{\rho_1, \rho_2, \rho_3}(z^{k+1}, y, x, \la^k, \mu^k, \nu^k) \right\}. 
\end{align*}
This demonstrates that (\ref{PADMM2}) is a 2-block proximal ADMM. 

It should be highlighted that all well-established convergence results on proximal ADMM 
for well-posed optimization problems are not applicable to (\ref{PADMM2}) directly. Note 
that (\ref{PADMM2}) uses the noisy data $b^\d$. If the convergence 
theory for well-posed optimization problems could be applicable, one would obtain 
a solution of the perturbed problem
\begin{align}\label{perturb}
\min\left\{f(Lx): A x = b^\d, \ x \in \C, \ x \in \mbox{dom}(L)\right\}
\end{align}
of (\ref{ip1.2}). Because $A$ is compact, it is very likely that $b^\d\not\in \mbox{Ran}(A^*)$
and thus (\ref{perturb}) makes no sense as the feasible set is empty. Even if 
$b^\d \in \mbox{Ran}(A^*)$ and (\ref{perturb}) has a solution, this solution 
could be far away from the solution of (\ref{ip1.2}) because of the ill-posedness. 

Therefore, if (\ref{PADMM2}) is used to solve (\ref{ip1.2}), better result can not be expected even if 
larger number of iterations are performed. In contrast, like all other iterative 
regularization methods, when (\ref{PADMM2}) is used to solve (\ref{ip1.2}), it shows the semi-convergence 
property, i.e., the iterate becomes close to the sought solution at the beginning; 
however, after a critical number of iterations, the iterate leaves the sought solution 
far away as the iteration proceeds. Thus, properly terminating the iteration is 
important to produce acceptable approximate solutions. One may hope 
to determine a stopping index $k_\d$, depending on $\d$ and/or $b^\d$, such that 
$\|x^{k_\d}-x^\dag\|$ is as small as possible and $\|x^{k_\d}-x^\dag\|\to 0$ 
as $\d\to 0$, where $x^\dag$ denotes the solutio of (\ref{ip1.2}).
This has been done in our previous work \cite{JJLW2016,JJLW2017} in which 
early stopping rules have been proposed for the method (\ref{PADMM2}) to render it 
into a regularization method and numerical results have been reported to demonstrate 
the nice performance. However, the work in \cite{JJLW2016,JJLW2017} does not provide 
convergence rates, i.e. the estimate on $\|x^{k_\d} - x^\dag\|$ in terms of $\d$.
Deriving convergence rates for iterative regularization methods involving general convex 
regularization terms is a challenging question and only a limited number of results are 
available. In order to derive a convergence rate of a regularization method for 
ill-posed problems, certain source condition should be imposed on the sought solution. 
In Section \ref{sect3}, under a benchmark source condition on the sought solution, 
we will provide a partial answer to this question by establishing a convergence rate 
result for (\ref{PADMM2}) if the iteration is terminated by an {\it a priori} 
stopping rule. 

We conclude this section with some notation and terminology. Let $\V$ be a real 
Hilbert spaces. We use $\l\cdot, \cdot\r$ and $\|\cdot\|$ to denote its inner 
product and the induced norm. We also use ``$\to$" and ``$\rightharpoonup$" to 
denote the strong convergence and weak convergence respectively. For a function 
$\varphi: \V \to (-\infty, \infty]$ its domain is defined as 
$\mbox{dom}(\varphi) := \{x\in \V: \varphi(x) <\infty\}$. If 
$\mbox{dom}(\varphi)\ne \emptyset$, $\varphi$ is called proper. For a proper 
convex function $\varphi: \V \to (-\infty, \infty]$, its modulus of convexity, 
denoted by $\sigma_\varphi$, is defined to be the largest number $c$ such that 
$$
\varphi(t x + (1-t) y) + c t(1-t) \|x-y\|^2 \le t \varphi(x) + (1-t) \varphi(y)
$$
for all $x, y \in \mbox{dom}(\varphi)$ and $0\le t\le 1$. We always have 
$\sigma_\varphi\ge 0$. If $\sigma_\varphi>0$, $\varphi$ is called strongly 
convex. For a proper convex function $\varphi: \V \to (-\infty, \infty]$, we use 
$\p \varphi$ to denote its subdifferential, i.e. 
$$
\p \varphi(x) :=\{\xi \in \V: \varphi(y) \ge \varphi(x) + \l \xi, y-x\r 
\mbox{ for all } y \in \V\}, \quad x \in \V. 
$$
Let $\mbox{dom}(\p \varphi) :=\{x\in \V: \p \varphi(x) \ne \emptyset\}$. It is 
easy to see that 
$$
\varphi(y) - \varphi(x) - \l \xi, y-x\r \ge \sigma_\varphi \|y-x\|^2 
$$
for all $y\in \V$, $x\in \mbox{dom}(\p \varphi)$ and $\xi \in \p \varphi(x)$ which 
in particular implies the monotonicity of $\p \varphi$, i.e. 
$$
\l \xi - \eta, x - y\r \ge 2 \sigma_\varphi \|x-y\|^2 
$$
for all $x, y \in \mbox{dom}(\p \varphi)$, $\xi \in \p \varphi(x)$ and $\eta \in \p \varphi(y)$.

\section{\bf Proximal ADMM for convex optimization problems}\label{sect2}
\setcounter{equation}{0}

In this section we will consider the proximal ADMM (\ref{alg1}) for solving the 
linearly constrained convex minimization problem (\ref{prob}). For the convergence 
analysis, we will make the following standard assumptions.

\begin{assumption}\label{Ass1}
$\X$, $\Y$ and $\Z$ are real Hilbert spaces, $A: \X \to \Z$ and $B: \Y \to \Z$ are 
bounded linear operators, $P: \X \to \X$ and $Q: \Y \to \Y$ are bounded linear 
positive semi-definite self-adjoint operators, and $f: \X\to (-\infty, \infty]$ 
and $g: \Y \to (-\infty, \infty]$ are proper, lower semi-continuous, convex functions.
\end{assumption}

\begin{assumption}\label{Ass2}
The problem (\ref{prob}) has a Karush-Kuhn-Tucker (KKT) point, i.e. there exists 
$(\bar x, \bar y, \bar \la) \in \X\times \Y \times \Z$ such that
$$
-A^* \bar \la \in \p f(\bar x), \quad -B^* \bar \la \in \p g (\bar y), \quad 
A \bar x + B \bar y =c.
$$
\end{assumption}

It should be mentioned that, to guarantee the proximal ADMM (\ref{alg1}) 
to be well-defined, certain additional conditions need to be imposed to ensure that 
the $x$- and $y$-subproblems do have minimizers. Since the well-definedness can 
be easily seen in concrete applications, to make the presentation more succinct we will 
not state these conditions explicitly. 

By the convexity of $f$ and $g$, it is easy to see that, for any KKT point 
$(\bar x, \bar y, \bar \la)$ of (\ref{prob}), there hold
\begin{align*}
0 &\le f(x) - f(\bar x) + \l\bar \la, A(x-\bar x)\r, \quad \forall x \in \X,\\
0 &\le g(y) - g(\bar y) + \l\bar \la, B(y-\bar y)\r, \quad \forall y \in \Y.
\end{align*}
Adding these two equations and using $A \bar x+ B\bar y-c =0$, it follows that
\begin{align}\label{9.18.11}
0 \le H(x, y) - H(\bar x, \bar y) + \l \bar \la, A x + B y-c \r, \quad 
\forall (x, y) \in \X \times \Y. 
\end{align}
This in particular implies that $(\bar x, \bar y)$ is a solution of (\ref{prob}) 
and thus $H_*:= H(\bar x, \bar y)$ is the minimum value of (\ref{prob}). 

Based on Assumptions \ref{Ass1} and \ref{Ass2} we will analyze the proximal ADMM 
(\ref{alg1}). For ease of exposition, we set $\widehat Q := \rho B^* B + Q$ and define 
\begin{align*}
G u:= (P x, \widehat Q y, \la/\rho), \quad \forall 
u:= (x, y, \la) \in \X \times \Y \times \Z
\end{align*}
which is a bounded linear positive semi-definite self-adjoint operator on
$\X\times \Y \times \Z$. Then, for any $u:=(x, y,\la) \in \X\times \Y \times \Z$ we have
$$
\|u\|_G^2:=\l u, G u\r = \|x\|_P^2 + \|y\|_{\widehat Q}^2 + \frac{1}{\rho} \|\la\|^2.
$$
For the sequence $\{u^k:=(x^k, y^k, \la^k)\}$ defined by the proximal ADMM (\ref{alg1}),
we use the notation
\begin{align*} 
\Delta x^k := x^k-x^{k-1}, \ \ \Delta y^k := y^k-y^{k-1}, \ \ \Delta\la^k := \la^k-\la^{k-1}, \ \  
\Delta u^k := u^k - u^{k-1}.
\end{align*}
We start from the first order optimality conditions on $x^{k+1}$ and $y^{k+1}$ which 
by definition can be stated as 
\begin{align}\label{fg}
\begin{split}
-A^* \la^k - \rho A^* (A x^{k+1} + B y^k -c) - P (x^{k+1}-x^k) & \in \p f(x^{k+1}), \\
-B^* \la^k - \rho B^* (A x^{k+1} + B y^{k+1} -c) - Q(y^{k+1}-y^k) & \in \p g(y^{k+1}). 
\end{split}
\end{align}
By using $\la^{k+1}=\la^k + \rho (A x^{k+1} + B y^{k+1} -c)$ we may rewrite (\ref{fg}) as 
\begin{align}\label{11.11.1}
\begin{split}
-A^* (\la^{k+1} - \rho B \Delta y^{k+1}) -P \Delta x^{k+1} &\in \p f(x^{k+1}), \\
-B^* \la^{k+1} - Q \Delta y^{k+1} &\in \p g(y^{k+1})
\end{split}
\end{align}
which will be frequently used in the following analysis. We first prove the following important 
result which is inspired by \cite[Lemma 3.1]{HY2012} and \cite[Theorem 15.4]{B2017}. 

\begin{proposition}\label{prop9.18}
Let Assumption \ref{Ass1} hold. Then for the proximal ADMM (\ref{alg1}) there holds
\begin{align*}
& \sigma_f \|x^{k+1} - x\|^2 + \sigma_g \|y^{k+1} - y\|^2 \\
& \le  H(x, y) - H(x^{k+1}, y^{k+1}) + \l \la^{k+1} - \rho B \Delta y^{k+1}, A x + B y -c \r \\
& \quad \, - \l \la, A x^{k+1} + B y^{k+1}-c\r + \frac{1}{2} \left(\|u^k-u\|_G^2  - \|u^{k+1}-u\|_G^2\right) \\
& \quad \, -\frac{1}{2\rho} \|\Delta \la^{k+1} - \rho B \Delta y^{k+1}\|^2 
-\frac{1}{2} \|\Delta x^{k+1}\|_P^2- \frac{1}{2} \|\Delta y^{k+1}\|_Q^2
\end{align*}
for all $u := (x, y, \la) \in \X \times \Y \times \Z$, where $\sigma_f$ and $\sigma_g$ denote the modulus of 
convexity of $f$ and $g$ respectively. 
\end{proposition}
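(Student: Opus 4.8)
The plan is to exploit the two subgradient inclusions in \eqref{11.11.1} together with the defining relation $\la^{k+1}=\la^k+\rho(Ax^{k+1}+By^{k+1}-c)$ and to telescope the resulting inequalities into the $G$-norm difference. First I would use the definition of $\sigma_f$ and $\sigma_g$: since $-A^*(\la^{k+1}-\rho B\Delta y^{k+1})-P\Delta x^{k+1}\in\p f(x^{k+1})$, the strong-convexity estimate quoted at the end of the introduction gives, for every $x\in\X$,
\[
f(x)-f(x^{k+1})-\l -A^*(\la^{k+1}-\rho B\Delta y^{k+1})-P\Delta x^{k+1},\, x-x^{k+1}\r \ \ge\ \sigma_f\|x-x^{k+1}\|^2,
\]
and similarly, from $-B^*\la^{k+1}-Q\Delta y^{k+1}\in\p g(y^{k+1})$,
\[
g(y)-g(y^{k+1})-\l -B^*\la^{k+1}-Q\Delta y^{k+1},\, y-y^{k+1}\r \ \ge\ \sigma_g\|y-y^{k+1}\|^2.
\]
Adding these two and moving the inner-product terms involving $A^*,B^*$ back onto the left via adjointness, one collects a term $\l\la^{k+1}-\rho B\Delta y^{k+1},A(x-x^{k+1})\r+\l\la^{k+1},B(y-y^{k+1})\r$, which I would reorganize into $\l\la^{k+1}-\rho B\Delta y^{k+1},Ax+By-c\r-\l\la^{k+1},Ax^{k+1}+By^{k+1}-c\r$ plus a leftover cross term $\rho\l B\Delta y^{k+1},B(y-y^{k+1})\r$; using $Ax^{k+1}+By^{k+1}-c=\Delta\la^{k+1}/\rho$ this last dual inner product becomes $-\frac1\rho\l\la,\Delta\la^{k+1}\r$-type contributions once $\la$ is inserted. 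The remaining proximal inner products $\l P\Delta x^{k+1},x-x^{k+1}\r$ and $\l Q\Delta y^{k+1},y-y^{k+1}\r$, and the dual term $\frac1\rho\l\Delta\la^{k+1},\la^{k+1}-\la\r$, are then each handled by the polarization identity $2\l Du,v\r=\|v+?\|_D^2-\cdots$; concretely $\l D(w^{k+1}-w^k),w-w^{k+1}\r=\tfrac12(\|w^k-w\|_D^2-\|w^{k+1}-w\|_D^2-\|w^{k+1}-w^k\|_D^2)$ for a positive semi-definite self-adjoint $D$.

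The key bookkeeping step is to group the three polarization outputs for $D=P$ (variable $x$), $D=\widehat Q=\rho B^*B+Q$ (variable $y$), and $D=\tfrac1\rho I$ (variable $\la$) so that the ``$\|w^k-w\|^2-\|w^{k+1}-w\|^2$'' pieces assemble exactly into $\tfrac12(\|u^k-u\|_G^2-\|u^{k+1}-u\|_G^2)$, by the very definition of $G$. Here the reason for choosing $\widehat Q=\rho B^*B+Q$ rather than just $Q$ is that the cross term $\rho\l B\Delta y^{k+1},B(y-y^{k+1})\r$ produced above must be absorbed: writing $\rho\|B(\cdot)\|^2+\|\cdot\|_Q^2=\|\cdot\|_{\widehat Q}^2$ lets the $\rho B^*B$ part of this cross term combine with the $Q$-polarization to give the clean $\widehat Q$-polarization. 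After this regrouping, the ``$-\|w^{k+1}-w^k\|^2$'' remainders are $-\tfrac12\|\Delta x^{k+1}\|_P^2$, $-\tfrac12\|\Delta y^{k+1}\|_{\widehat Q}^2=-\tfrac\rho2\|B\Delta y^{k+1}\|^2-\tfrac12\|\Delta y^{k+1}\|_Q^2$, and $-\tfrac1{2\rho}\|\Delta\la^{k+1}\|^2$; the two terms $-\tfrac\rho2\|B\Delta y^{k+1}\|^2-\tfrac1{2\rho}\|\Delta\la^{k+1}\|^2$ together with the extra $+\l\Delta\la^{k+1},B\Delta y^{k+1}\r$ cross term (which also drops out of the reorganization in the previous paragraph) complete the square to $-\tfrac1{2\rho}\|\Delta\la^{k+1}-\rho B\Delta y^{k+1}\|^2$, matching the stated conclusion.

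I expect the main obstacle to be precisely this last completion-of-the-square and the careful tracking of all cross terms coming from the substitution $Ax^{k+1}+By^{k+1}-c=\Delta\la^{k+1}/\rho$: there are several terms of the form $\l\la^{k+1},B\Delta y^{k+1}\r$, $\l\Delta\la^{k+1},B\Delta y^{k+1}\r$, $\rho\l B\Delta y^{k+1},B(y-y^{k+1})\r$, and $\tfrac1\rho\l\la-\la^{k+1},\Delta\la^{k+1}\r$ that must be shuffled between the ``dual residual'' part of the right-hand side and the squared-difference remainder, and a sign error anywhere propagates. Everything else — the subgradient inequalities, adjoint moves, and polarization identities — is routine. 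No compactness or weak-convergence input is needed here; the identity is purely algebraic once the optimality conditions \eqref{11.11.1} are in hand, so Assumption \ref{Ass1} alone suffices, as stated.
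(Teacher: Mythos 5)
Your proposal is correct and follows essentially the same route as the paper's proof: subgradient inequalities from \eqref{11.11.1} with the convexity moduli, regrouping the dual terms around $\tilde\la^{k+1}:=\la^{k+1}-\rho B\Delta y^{k+1}$, polarization identities for $P$, $\widehat Q=\rho B^*B+Q$ and $\tfrac1\rho I$ assembling into the $G$-norm difference, and the final completion of the square producing $-\tfrac1{2\rho}\|\Delta\la^{k+1}-\rho B\Delta y^{k+1}\|^2-\tfrac12\|\Delta y^{k+1}\|_Q^2$. All cross terms are accounted for exactly as in the paper, so no gap remains.
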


\begin{proof}
Let $\tilde \la^{k+1} := \la^{k+1} - \rho B \Delta y^{k+1}$. By using (\ref{11.11.1}) and
the convexity of $f$ and $g$ we have for any $(x, y, \la) \in \X \times \Y \times \Z$ that 
\begin{align*}
& \sigma_f \|x^{k+1} - x\|^2 + \sigma_g \|y^{k+1} - y\|^2 \\
& \le  f(x) - f(x^{k+1}) + \l \la^{k+1} -\rho B \Delta y^{k+1}, A (x-x^{k+1})\r 
+ \l P\Delta x^{k+1}, x -x^{k+1}\r \\
& \quad \, + g(y) - g(y^{k+1}) + \l \la^{k+1}, B(y-y^{k+1})\r 
+ \l Q \Delta y^{k+1}, y-y^{k+1}\r \displaybreak[0]\\
& =  H(x, y) - H(x^{k+1}, y^{k+1}) + \l \tilde \la^{k+1}, A(x-x^{k+1}) + B(y - y^{k+1})\r \\
& \quad \, + \l P\Delta x^{k+1}, x -x^{k+1}\r + \l \widehat Q \Delta y^{k+1}, y-y^{k+1}\r \displaybreak[0]\\
& = H(x, y) - H(x^{k+1}, y^{k+1}) + \l \tilde \la^{k+1}, A x + B y -c\r \\
& \quad \, - \l \la, A x^{k+1} + B y^{k+1} -c \r 
+ \l \la - \tilde \la^{k+1}, A x^{k+1} + B y^{k+1} - c\r \\
& \quad \, + \l P\Delta x^{k+1}, x -x^{k+1}\r + \l \widehat Q \Delta y^{k+1}, y-y^{k+1}\r.
\end{align*}
Since $\rho(A x^{k+1} + B y^{k+1} - c) = \Delta \la^{k+1}$ we then obtain 
\begin{align*}
& \sigma_f \|x^{k+1} - x\|^2 + \sigma_g \|y^{k+1} - y\|^2 \\
& \le  H(x, y) - H(x^{k+1}, y^{k+1}) + \l \tilde \la^{k+1}, A x + By -c\r 
- \l \la, A x^{k+1} + B y^{k+1} -c \r \\
& \quad \, + \frac{1}{\rho} \l \la - \la^{k+1}, \Delta \la^{k+1}\r 
+ \frac{1}{\rho}\l \la^{k+1} - \tilde \la^{k+1}, \Delta \la^{k+1} \r \\
& \quad \, + \l P\Delta x^{k+1}, x -x^{k+1}\r 
+ \l \widehat Q \Delta y^{k+1}, y-y^{k+1}\r.
\end{align*}
By using the polarization identity and the definition of $G$, it follows that
\begin{align*}
& \sigma_f \|x^{k+1} - x\|^2 + \sigma_g \|y^{k+1} - y\|^2 \\
& \le  H(x, y) - H(x^{k+1}, y^{k+1}) + \l \tilde \la^{k+1}, A x + B y -c\r 
- \l \la, A x^{k+1} + B y^{k+1} -c \r \displaybreak[0]\\
& \quad \, + \frac{1}{2\rho} \left(\|\la^k-\la\|^2 - \|\la^{k+1} -\la\|^2 
- \|\Delta \la^{k+1}\|^2 \right) \displaybreak[0]\\
& \quad \, - \frac{1}{2\rho} \left(\|\la^k - \tilde \la^{k+1}\|^2 
- \|\la^{k+1} - \tilde \la^{k+1}\|^2 - \|\Delta \la^{k+1}\|^2\right) \displaybreak[0]\\
& \quad \, + \frac{1}{2} \left(\|x^k-x\|_P^2 -\|x^{k+1}-x\|_P^2 
- \|\Delta x^{k+1}\|_P^2 \right) \displaybreak[0]\\
& \quad \, + \frac{1}{2} \left(\|y^k-y\|_{\widehat Q}^2 - \|y^{k+1}-y\|_{\widehat Q}^2 
-\|\Delta y^{k+1}\|_{\widehat Q}^2 \right) \displaybreak[0]\\
& = H(x, y) - H(x^{k+1}, y^{k+1}) + \l \tilde \la^{k+1}, A x + B y -c\r 
- \l \la, A x^{k+1} + B y^{k+1} -c \r \\
& \quad \, + \frac{1}{2} \left(\|u^k-u\|_G^2 - \|u^{k+1} - u\|_G^2 \right)
-\frac{1}{2\rho} \left(\|\la^k - \tilde \la^{k+1}\|^2 - \|\la^{k+1} - \tilde \la^{k+1}\|^2\right) \\
& \quad \, - \frac{1}{2} \|\Delta x^{k+1}\|_P^2 - \frac{1}{2} \|\Delta y^{k+1}\|_{\widehat Q}^2. 
\end{align*}
Using the definition of $\tilde  \la^{k+1}$ gives
\begin{align*}
\la^k - \tilde \la^{k+1}  = - \Delta \la^{k+1} + \rho B \Delta y^{k+1}, \quad 
\la^{k+1} -\tilde  \la^{k+1} = \rho B \Delta y^{k+1}.
\end{align*}
Therefore
\begin{align*}
& \sigma_f \|x^{k+1} - x\|^2 + \sigma_g \|y^{k+1} - y\|^2 \\
& \le  H(x, y) - H(x^{k+1}, y^{k+1}) + \l \tilde \la^{k+1}, A x + B y -c\r 
- \l \la, A x^{k+1} + B y^{k+1} -c \r \\
& \quad \, + \frac{1}{2} \left(\|u^k-u\|_G^2 - \|u^{k+1} - u\|_G^2 \right)
-\frac{1}{2\rho} \|\Delta \la^{k+1} - \rho B \Delta y^{k+1}\|^2 \\
& \quad \, - \frac{1}{2} \|\Delta x^{k+1}\|_P^2 - \frac{1}{2} \|\Delta y^{k+1}\|_{\widehat Q}^2
+ \frac{\rho}{2} \|B \Delta y^{k+1}\|^2. 
\end{align*}
Since $\rho \|B \Delta y^{k+1}\|^2 - \|\Delta y^{k+1}\|_{\widehat Q}^2 = - \|\Delta y^{k+1}\|_Q^2$, 
we thus complete the proof. 
\end{proof}

\begin{corollary}\label{cor2}
Let Assumption \ref{Ass1} and Assumption \ref{Ass2} hold and let $\bar u :=(\bar x, \bar y, \bar \la)$ 
be any KKT point of (\ref{prob}). Then for the proximal ADMM (\ref{alg1}) there holds
\begin{align}\label{admm.111}
 \sigma_f \|x^{k+1} - \bar x\|^2 + \sigma_g \|y^{k+1} - \bar y\|^2 
 & \le H_* - H(x^{k+1}, y^{k+1}) - \l \bar \la, A x^{k+1} + B y^{k+1} -c \r  \nonumber \\
& \quad \, + \frac{1}{2} \left(\|u^k-\bar u\|_G^2 - \|u^{k+1} - \bar u\|_G^2 \right)
\end{align}
for all $k\ge 0$. Moreover, the sequence $\{\|u^k-\bar u\|_G^2\}$ is monotonically 
decreasing.
\end{corollary}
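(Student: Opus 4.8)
The plan is to specialize Proposition \ref{prop9.18} by taking $u = \bar u := (\bar x, \bar y, \bar\la)$, where $(\bar x, \bar y, \bar\la)$ is the given KKT point. Since $A\bar x + B\bar y - c = 0$, the term $\l \la^{k+1} - \rho B\Delta y^{k+1}, A\bar x + B\bar y - c\r$ on the right-hand side vanishes identically, and $H(\bar x, \bar y) = H_*$ by the discussion following \eqref{9.18.11}. This immediately produces
$$
\sigma_f\|x^{k+1}-\bar x\|^2 + \sigma_g\|y^{k+1}-\bar y\|^2 \le H_* - H(x^{k+1},y^{k+1}) - \l\bar\la, Ax^{k+1}+By^{k+1}-c\r + \tfrac12\bigl(\|u^k-\bar u\|_G^2 - \|u^{k+1}-\bar u\|_G^2\bigr) - R_{k+1},
$$
where $R_{k+1} := \tfrac{1}{2\rho}\|\Delta\la^{k+1} - \rho B\Delta y^{k+1}\|^2 + \tfrac12\|\Delta x^{k+1}\|_P^2 + \tfrac12\|\Delta y^{k+1}\|_Q^2 \ge 0$. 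Dropping the nonnegative term $-R_{k+1}$ gives exactly \eqref{admm.111}.

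For the monotonicity claim, I would observe that the left-hand side of \eqref{admm.111} is nonnegative (since $\sigma_f,\sigma_g \ge 0$), so it suffices to show that
$$
H_* - H(x^{k+1},y^{k+1}) - \l\bar\la, Ax^{k+1}+By^{k+1}-c\r \le 0.
$$
But this is precisely \eqref{9.18.11} applied at the point $(x,y) = (x^{k+1}, y^{k+1})$: indeed \eqref{9.18.11} reads $0 \le H(x^{k+1},y^{k+1}) - H_* + \l\bar\la, Ax^{k+1}+By^{k+1}-c\r$, which rearranges to the displayed inequality. Hence the right-hand side of \eqref{admm.111} bounds a nonnegative quantity from above, forcing $\tfrac12\bigl(\|u^k-\bar u\|_G^2 - \|u^{k+1}-\bar u\|_G^2\bigr) \ge 0$, i.e. $\|u^{k+1}-\bar u\|_G^2 \le \|u^k-\bar u\|_G^2$ for all $k \ge 0$.

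There is no real obstacle here; the work is entirely front-loaded into Proposition \ref{prop9.18}. The only point requiring a moment of care is making sure the sign of the coupling term matches: one must check that the term $\l\bar\la, Ax^{k+1}+By^{k+1}-c\r$ appearing with a minus sign in \eqref{admm.111} is the same expression (up to sign) that \eqref{9.18.11} controls, so that the two combine correctly rather than cancelling the wrong way. Once that bookkeeping is confirmed, both assertions follow immediately.
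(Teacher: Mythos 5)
Your proof is correct and follows exactly the paper's argument: take $u=\bar u$ in Proposition \ref{prop9.18}, use $A\bar x+B\bar y-c=0$ and drop the nonnegative remainder terms to get \eqref{admm.111}, then invoke \eqref{9.18.11} at $(x^{k+1},y^{k+1})$ to bound the nonnegative left-hand side by $\tfrac12\bigl(\|u^k-\bar u\|_G^2-\|u^{k+1}-\bar u\|_G^2\bigr)$, which yields the monotonicity. No issues.
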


\begin{proof}
By taking $u = \bar u$ in Proposition \ref{prop9.18} and using 
$A \bar x + B \bar y - c =0$ we immediately obtain (\ref{admm.111}). According to 
(\ref{9.18.11}) we have 
$$
H(x^{k+1}, y^{k+1}) - H_* + \l \bar \la, A x^{k+1} + B y^{k+1} -c \r \ge 0.   
$$
Thus, from (\ref{admm.111}) we can obtain 
\begin{align}\label{eq:cor.1}
\sigma_f \|x^{k+1} - \bar x\|^2 + \sigma_g \|y^{k+1} - \bar y\|^2 
\le \frac{1}{2} \left(\|u^k-\bar u\|_G^2 - \|u^{k+1} - \bar u\|_G^2 \right)
\end{align}
which implies the monotonicity of the sequence $\{\|u^k - \bar u\|_G^2\}$. 
\end{proof}

We next show that $\|\Delta u^k\|_G^2 = o(1/k)$ as $k\to \infty$. This result for
the proximal ADMM (\ref{alg1}) with $Q =0$ has been established in \cite{HY2015}
based on a variational inequality approach. We will establish this result for the 
proximal ADMM (\ref{alg1}) with general bounded linear positive semi-definite 
self-adjoint operators $P$ and $Q$ by a simpler argument. 

\begin{lemma}\label{lem:mono}
Let Assumption \ref{Ass1} hold. For the proximal ADMM (\ref{alg1}), the sequence $\{\|\Delta u^k\|_G^2\}$ 
is monotonically decreasing.
\end{lemma}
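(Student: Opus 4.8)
The plan is to use the optimality characterization (\ref{11.11.1}) at two consecutive iterations together with the monotonicity of $\p f$ and $\p g$, running essentially the same algebraic manipulation that appears in the proof of Proposition \ref{prop9.18}, but now with the \emph{previous} iterate playing the role of the ``test point''. Since (\ref{11.11.1}) also holds with $k+1$ replaced by $k$ for every $k\ge 1$, subtracting the two inclusions for $\p f$ and invoking the monotonicity estimate $\l \xi-\eta,\, x^{k+1}-x^k\r\ge 2\sigma_f\|x^{k+1}-x^k\|^2\ge 0$ (with $\xi\in\p f(x^{k+1})$, $\eta\in\p f(x^k)$) yields
\begin{align*}
& \l \Delta\la^{k+1}-\rho B(\Delta y^{k+1}-\Delta y^k),\, A\Delta x^{k+1}\r
+ \l P(\Delta x^{k+1}-\Delta x^k),\, \Delta x^{k+1}\r \le 0,
\end{align*}
and, likewise, from the inclusion for $\p g$,
\begin{align*}
& \l \Delta\la^{k+1},\, B\Delta y^{k+1}\r + \l Q(\Delta y^{k+1}-\Delta y^k),\, \Delta y^{k+1}\r \le 0.
\end{align*}

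Next I would add these two inequalities and substitute the identity $A\Delta x^{k+1}+B\Delta y^{k+1}=\frac{1}{\rho}(\Delta\la^{k+1}-\Delta\la^k)$, obtained by subtracting two consecutive instances of the $\la$-update in (\ref{alg1}). Expanding every inner product by the polarization identity, exactly as in the proof of Proposition \ref{prop9.18}, turns $\l \Delta\la^{k+1},\, A\Delta x^{k+1}+B\Delta y^{k+1}\r$, $\l P(\Delta x^{k+1}-\Delta x^k),\, \Delta x^{k+1}\r$ and $\l Q(\Delta y^{k+1}-\Delta y^k),\, \Delta y^{k+1}\r$ into telescoping differences of $\frac{1}{\rho}\|\Delta\la\|^2$, $\|\Delta x\|_P^2$ and $\|\Delta y\|_Q^2$ plus nonnegative squared remainders, while the cross term $-\rho\l B(\Delta y^{k+1}-\Delta y^k),\, A\Delta x^{k+1}\r$ is handled by substituting $A\Delta x^{k+1}=\frac{1}{\rho}(\Delta\la^{k+1}-\Delta\la^k)-B\Delta y^{k+1}$ and then using $\rho\|B\Delta y^{k+1}\|^2+\|\Delta y^{k+1}\|_Q^2=\|\Delta y^{k+1}\|_{\widehat Q}^2$ to recombine everything into the $G$-norm. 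The outcome should be
\begin{align*}
\|\Delta u^{k+1}\|_G^2
&\le \|\Delta u^k\|_G^2 - \frac{1}{\rho}\|\Delta\la^{k+1}-\Delta\la^k-\rho B(\Delta y^{k+1}-\Delta y^k)\|^2 \\
&\quad - \|\Delta x^{k+1}-\Delta x^k\|_P^2 - \|\Delta y^{k+1}-\Delta y^k\|_Q^2
\end{align*}
for all $k\ge 1$; in particular $\|\Delta u^{k+1}\|_G^2\le\|\Delta u^k\|_G^2$, which is the assertion.

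Two comments on the execution. An equivalent and arguably cleaner bookkeeping is to observe that the shifted sequence $\{u^{k+1}\}_{k\ge 0}$ is itself generated by the proximal ADMM (\ref{alg1}) (with initial data $(x^1,y^1,\la^1)$), so that the displayed estimate is just the special case $v^k=u^{k+1}$ of a nonexpansiveness property $\|u^{k+1}-v^{k+1}\|_G^2\le\|u^k-v^k\|_G^2$ valid for any two sequences $\{u^k\}$, $\{v^k\}$ produced by (\ref{alg1}), proved by the very same subtraction argument applied to the two sequences. The one step needing care is the cancellation of the $\|B(\Delta y^{k+1}-\Delta y^k)\|^2$ contribution coming from the cross term against the one hidden inside $\frac{1}{\rho}\|\Delta\la^{k+1}-\Delta\la^k\|^2$, so that exactly the clean square $\frac{1}{\rho}\|\Delta\la^{k+1}-\Delta\la^k-\rho B(\Delta y^{k+1}-\Delta y^k)\|^2$ survives; this mirrors the cancellation carried out at the end of the proof of Proposition \ref{prop9.18} (there encoded by $\rho\|B\Delta y^{k+1}\|^2-\|\Delta y^{k+1}\|_{\widehat Q}^2=-\|\Delta y^{k+1}\|_Q^2$), so no genuinely new difficulty appears. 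One could also retain the terms $2\sigma_f\|\Delta x^{k+1}\|^2$ and $2\sigma_g\|\Delta y^{k+1}\|^2$ on the right-hand side for a sharper estimate, but they are not needed for the monotonicity.
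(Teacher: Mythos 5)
Your proposal is correct and follows essentially the same route as the paper: apply the optimality inclusions (\ref{11.11.1}) at two consecutive iterations, use the monotonicity of $\p f$ and $\p g$, substitute $A\Delta x^{k+1}+B\Delta y^{k+1}=\frac{1}{\rho}(\Delta\la^{k+1}-\Delta\la^k)$, and expand by polarization to obtain exactly the descent inequality you display, whose nonnegative remainder terms give the monotonicity. The nonexpansiveness reinterpretation via the shifted sequence is a pleasant extra observation but does not change the underlying computation.
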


\begin{proof}
By using (\ref{11.11.1}) and the monotonicity of $\p f$ and $\p g$, we can obtain
\begin{align*}
0 \le & \left\l -A^*(\Delta \la^{k+1} - \rho B \Delta y^{k+1} + \rho B \Delta y^k) - P \Delta x^{k+1} + P \Delta x^k, \Delta x^{k+1}\right\r \\
& + \left\l - B^* \Delta \la^{k+1}-Q \Delta y^{k+1} 
+ Q\Delta y^k, \Delta y^{k+1} \right\r \\
= & - \l \Delta \la^{k+1}, A \Delta x^{k+1} + B \Delta y^{k+1} \r + \rho \l B (\Delta y^{k+1} -\Delta y^k), A \Delta x^{k+1}\r \\
& - \l P (\Delta x^{k+1} - \Delta x^k), \Delta x^{k+1}\r 
- \l Q(\Delta y^{k+1} - \Delta y^k), \Delta y^{k+1}\r.
\end{align*}
Note that
\begin{align*}
A \Delta x^{k+1} + B \Delta y^{k+1} = \frac{1}{\rho} (\Delta \la^{k+1} -\Delta \la^k).
\end{align*}
We therefore have
\begin{align*}
0 & \le - \frac{1}{\rho} \l \Delta \la^{k+1}, \Delta \la^{k+1} -\Delta \la^k\r 
- \rho \l B (\Delta y^{k+1} -\Delta y^k), B \Delta y^{k+1}\r \\
& \quad \, + \l B(\Delta y^{k+1} - \Delta y^k), \Delta \la^{k+1} - \Delta \la^k\r \\
& \quad \, - \l P (\Delta x^{k+1} - \Delta x^k), \Delta x^{k+1}\r - \l Q(\Delta y^{k+1} - \Delta y^k), \Delta y^{k+1}\r.
\end{align*}
By the polarization identity we then have
\begin{align*}
0 & \le \frac{1}{2\rho} \left(\|\Delta \la^k\|^2 - \|\Delta \la^{k+1}\|^2 - \|\Delta\la^k -\Delta \la^{k+1}\|^2 \right) \\
& \quad \, + \frac{\rho}{2} \left(\|B \Delta y^k\|^2 - \|B \Delta y^{k+1}\|^2 - \|B(\Delta y^k-\Delta y^{k+1})\|^2 \right)\\
& \quad \, + \frac{1}{2} \left(\|\Delta x^k \|_P^2 - \|\Delta x^{k+1}\|_P^2 - \|\Delta x^k-\Delta x^{k+1}\|_P^2\right)\\
& \quad \, + \frac{1}{2} \left(\|\Delta y^k\|_Q^2 - \|\Delta y^{k+1}\|_Q^2 - \|\Delta y^k-\Delta y^{k+1}\|_Q^2 \right)\\
& \quad \, + \l B(\Delta y^{k+1} - \Delta y^k), \Delta \la^{k+1} - \Delta \la^k\r.
\end{align*}
With the help of the definition of $G$, we obtain
\begin{align*}
0 & \le \|\Delta u^k\|_G^2 - \|\Delta u^{k+1}\|_G^2 - \|\Delta x^k-\Delta x^{k+1}\|_P^2 - \|\Delta y^k-\Delta y^{k+1}\|_Q^2 \\
& \quad \, - \frac{\rho}{2} \left\|B(\Delta y^{k+1} -\Delta y^k) - \frac{1}{\rho} (\Delta \la^{k+1} -\Delta \la^k) \right\|^2
\end{align*}
which completes the proof. 
\end{proof}

\begin{lemma}\label{prop9.20}
Let Assumptions \ref{Ass1} and \ref{Ass2} hold and let $\bar u:=(\bar x, \bar y, \bar \la)$ 
be any KKT point of (\ref{prob}). For the proximal ADMM (\ref{alg1}) there holds 
\begin{align*} 
& \|\Delta u^{k+1}\|_G^2 \le \left(\|u^k-\bar u\|_G^2 + \|\Delta y^k\|_Q^2\right) 
-\left(\|u^{k+1}-\bar u\|_G^2 + \|\Delta y^{k+1}\|_Q^2\right)
\end{align*}
for all $k \ge 1$.
\end{lemma}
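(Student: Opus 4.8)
The plan is to specialize Proposition \ref{prop9.18} to a KKT point, record one algebraic identity for $\|\Delta u^{k+1}\|_G^2$, and then control the resulting cross term using the monotonicity of $\p g$.

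First I would apply Proposition \ref{prop9.18} with $u=\bar u=(\bar x,\bar y,\bar\la)$. Since $A\bar x+B\bar y-c=0$, the term $\l\la^{k+1}-\rho B\Delta y^{k+1},A\bar x+B\bar y-c\r$ vanishes, so the inequality reads
\[
\sigma_f\|x^{k+1}-\bar x\|^2+\sigma_g\|y^{k+1}-\bar y\|^2 \le \big(H_*-H(x^{k+1},y^{k+1})-\l\bar\la,Ax^{k+1}+By^{k+1}-c\r\big)+\frac12\big(\|u^k-\bar u\|_G^2-\|u^{k+1}-\bar u\|_G^2\big)-\frac{1}{2\rho}\|\Delta\la^{k+1}-\rho B\Delta y^{k+1}\|^2-\frac12\|\Delta x^{k+1}\|_P^2-\frac12\|\Delta y^{k+1}\|_Q^2 .
\]
By (\ref{9.18.11}) the first bracket is $\le 0$, and the two $\sigma$-terms on the left are nonnegative; dropping them gives
\[
\|\Delta x^{k+1}\|_P^2+\|\Delta y^{k+1}\|_Q^2+\frac{1}{\rho}\|\Delta\la^{k+1}-\rho B\Delta y^{k+1}\|^2 \le \|u^k-\bar u\|_G^2-\|u^{k+1}-\bar u\|_G^2 .
\]

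Next I would use the definition of $G$ together with $\widehat Q=\rho B^*B+Q$ to write $\|\Delta y^{k+1}\|_{\widehat Q}^2=\rho\|B\Delta y^{k+1}\|^2+\|\Delta y^{k+1}\|_Q^2$, and then expand the square $\|\Delta\la^{k+1}-\rho B\Delta y^{k+1}\|^2$ to obtain the identity
\[
\|\Delta u^{k+1}\|_G^2 = \|\Delta x^{k+1}\|_P^2+\|\Delta y^{k+1}\|_Q^2+\frac{1}{\rho}\|\Delta\la^{k+1}-\rho B\Delta y^{k+1}\|^2+2\l\Delta\la^{k+1},B\Delta y^{k+1}\r .
\]
Combining with the previous display yields
\[
\|\Delta u^{k+1}\|_G^2 \le \|u^k-\bar u\|_G^2-\|u^{k+1}-\bar u\|_G^2+2\l\Delta\la^{k+1},B\Delta y^{k+1}\r .
\]

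It then remains to show $2\l\Delta\la^{k+1},B\Delta y^{k+1}\r\le\|\Delta y^k\|_Q^2-\|\Delta y^{k+1}\|_Q^2$. For this I would invoke the second inclusion of (\ref{11.11.1}) at the indices $k$ and $k+1$, namely $-B^*\la^{k+1}-Q\Delta y^{k+1}\in\p g(y^{k+1})$ and $-B^*\la^{k}-Q\Delta y^{k}\in\p g(y^{k})$ (here $k\ge1$ is needed so that $\Delta y^k$ and the inclusion at index $k$ are meaningful), and use the monotonicity of $\p g$ to get
\[
0 \le \l -B^*\Delta\la^{k+1}-Q(\Delta y^{k+1}-\Delta y^k),\Delta y^{k+1}\r = -\l\Delta\la^{k+1},B\Delta y^{k+1}\r-\l Q(\Delta y^{k+1}-\Delta y^k),\Delta y^{k+1}\r .
\]
The polarization identity for the $Q$-seminorm then gives $2\l\Delta\la^{k+1},B\Delta y^{k+1}\r\le\|\Delta y^k\|_Q^2-\|\Delta y^{k+1}\|_Q^2-\|\Delta y^{k+1}-\Delta y^k\|_Q^2$, which is $\le\|\Delta y^k\|_Q^2-\|\Delta y^{k+1}\|_Q^2$; substituting this into the last display finishes the proof. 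I expect the only slightly delicate point to be recognizing the exact cross-term identity for $\|\Delta u^{k+1}\|_G^2$ in the second step; the rest is a routine combination of Proposition \ref{prop9.18}, (\ref{9.18.11}), the monotonicity of $\p g$, and polarization — the same ingredients already used in Corollary \ref{cor2} and Lemma \ref{lem:mono}.
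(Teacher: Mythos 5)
Your proof is correct. The decisive second half --- bounding the cross term $2\l \Delta\la^{k+1}, B\Delta y^{k+1}\r$ by $\|\Delta y^k\|_Q^2-\|\Delta y^{k+1}\|_Q^2$ via the second inclusion of (\ref{11.11.1}) at consecutive indices, the monotonicity of $\p g$, and polarization --- is exactly the paper's argument, including the correct observation that this is where $k\ge 1$ enters. Where you differ is in the first half: the paper re-runs the monotonicity computation from scratch, pairing (\ref{11.11.1}) against the KKT inclusions $-A^*\bar\la\in\p f(\bar x)$, $-B^*\bar\la\in\p g(\bar y)$ and then polarizing, whereas you recycle Proposition \ref{prop9.18} at $u=\bar u$, discard the nonnegative left-hand side and the nonpositive bracket via (\ref{9.18.11}), and keep the residual terms $\|\Delta x^{k+1}\|_P^2+\|\Delta y^{k+1}\|_Q^2+\frac{1}{\rho}\|\Delta\la^{k+1}-\rho B\Delta y^{k+1}\|^2$ that Corollary \ref{cor2} throws away. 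Your algebraic identity converting that sum into $\|\Delta u^{k+1}\|_G^2-2\l\Delta\la^{k+1},B\Delta y^{k+1}\r$ checks out (expand the square and use $\|\cdot\|_{\widehat Q}^2=\rho\|B\cdot\|^2+\|\cdot\|_Q^2$), and it makes explicit how the lemma is really a sharpened form of the Fej\'{e}r-type inequality (\ref{eq:cor.1}). The payoff of your route is economy --- no repeated monotonicity computation --- at the cost of a slightly weaker final inequality: the paper's direct derivation also retains the extra term $-\|\Delta y^{k+1}-\Delta y^k\|_Q^2$ on the right before discarding it, which is immaterial here since only the stated inequality is used downstream.
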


\begin{proof}
 We will use (\ref{11.11.1}) together with $-A^* \bar \la\in \p f(\bar x)$ 
and $-B^* \bar \la \in \p g(\bar y)$. By using the monotonicity of $\p f$ and $\p g$
we have
\begin{align*}
0 & \le \left\l -A^* (\la^{k+1} -\bar \la - \rho B \Delta y^{k+1}) - P \Delta x^{k+1}, 
x^{k+1}-\bar x\right\r \\
& \quad \, + \left\l - B^* (\la^{k+1} - \bar \la) -Q \Delta y^{k+1}, y^{k+1} - \bar y\right\r \\
& = \l \bar \la-\la^{k+1}, A x^{k+1} + B y^{k+1} -c \r 
+ \rho \l B \Delta y^{k+1}, A (x^{k+1}-\bar x)\r \\
& \quad \, - \l P \Delta x^{k+1}, x^{k+1}-\bar x\r - \l Q \Delta y^{k+1}, y^{k+1} - \bar y\r.
\end{align*}
By virtue of $\rho(A x^{k+1} + B y^{k+1} - c\r = \Delta \la^{k+1}$ we further have
\begin{align*}
0 & \le \frac{1}{\rho} \l \bar \la -\la^{k+1}, \Delta \la^{k+1} \r 
- \rho \l B \Delta y^{k+1}, B(y^{k+1} - \bar y) \r
+ \l B \Delta y^{k+1}, \Delta \la^{k+1}\r \\
& \quad \, - \l P \Delta x^{k+1}, x^{k+1}-\bar x\r - \l Q \Delta y^{k+1}, y^{k+1} - \bar y\r.
\end{align*}
By using the second equation in (\ref{11.11.1}) and the monotonicity of $\p g$ we have
\begin{align*}
0 & \le \left\l -B^* \Delta \la^{k+1} - Q \Delta y^{k+1} + Q \Delta y^k, \Delta y^{k+1}\right\r \\
& = - \l \Delta \la^{k+1}, B \Delta y^{k+1}\r - \l Q(\Delta y^{k+1} -\Delta y^k), \Delta y^{k+1}\r
\end{align*}
which shows that
$$
 \l \Delta \la^{k+1}, B \Delta y^{k+1}\r \le - \l Q(\Delta y^{k+1} -\Delta y^k), \Delta y^{k+1}\r.
$$
Therefore
\begin{align*}
0\le & \frac{1}{\rho} \l \bar \la-\la^{k+1}, \Delta \la^{k+1} \r 
- \l \widehat Q \Delta y^{k+1}, y^{k+1} - \bar y\r - \l P \Delta x^{k+1}, x^{k+1}-\bar x\r\\
& - \l Q(\Delta y^{k+1} -\Delta y^k), \Delta y^{k+1}\r.
\end{align*}
By using the polarization identity we then obtain
\begin{align*}
0\le & \frac{1}{2\rho} \left(\l \la^k-\bar \la\|^2 -\|\la^{k+1} -\bar \la\|^2 
-\|\Delta \la^{k+1}\|^2 \right) \\
& + \frac{1}{2} \left( \|y^k-\bar y\|_{\widehat Q}^2 - \|y^{k+1}-\bar y\|_{\widehat Q}^2 
-\|\Delta y^{k+1}\|_{\widehat Q}^2\right)\\
& + \frac{1}{2} \left(\|x^k-\bar x\|_P^2 - \|x^{k+1}-\bar x\|_P^2 
- \|\Delta x^{k+1}\|_P^2 \right)  \\
& + \frac{1}{2} \left(\|\Delta y^k\|_Q^2 - \|\Delta y^{k+1}\|_Q^2 
- \|\Delta y^{k+1} -\Delta y^k\|_Q^2 \right).
\end{align*}
Recalling the definition of $G$ we then complete the proof. 
\end{proof}

\begin{proposition}\label{lem3}
Let Assumption \ref{Ass1} and Assumption \ref{Ass2} hold. Then for the proximal 
ADMM (\ref{alg1}) there holds $\|\Delta u^k\|_G^2 = o(1/k)$ as $k\to \infty$.
\end{proposition}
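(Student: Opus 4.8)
The plan is to combine the two monotonicity results established just above, namely Lemma \ref{lem:mono} and Lemma \ref{prop9.20}, in the standard way one proves an $o(1/k)$ rate for a monotone summable quantity. First I would set $a_k := \|u^k - \bar u\|_G^2 + \|\Delta y^k\|_Q^2$ for a fixed KKT point $\bar u$. Lemma \ref{prop9.20} says precisely that $\|\Delta u^{k+1}\|_G^2 \le a_k - a_{k+1}$ for all $k \ge 1$; in particular $\{a_k\}$ is nonincreasing and bounded below by $0$, hence convergent, and summing the inequality gives
\begin{align*}
\sum_{k=1}^\infty \|\Delta u^{k+1}\|_G^2 \le a_1 < \infty,
\end{align*}
so $\sum_k \|\Delta u^k\|_G^2 < \infty$.

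Next I would upgrade this summability to an $o(1/k)$ decay using the monotonicity of $\{\|\Delta u^k\|_G^2\}$ from Lemma \ref{lem:mono}. The key elementary fact is: if $\{b_k\}$ is a nonnegative nonincreasing sequence with $\sum_k b_k < \infty$, then $k\, b_k \to 0$. Indeed, for any $m$ one has $\lfloor k/2\rfloor\, b_k \le \sum_{j=\lfloor k/2\rfloor+1}^{k} b_j \le \sum_{j > \lfloor k/2\rfloor} b_j \to 0$ as $k\to\infty$ by the Cauchy criterion for the convergent series, and since $\lfloor k/2\rfloor \ge (k-1)/2$ this forces $k\, b_k \to 0$. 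Applying this with $b_k = \|\Delta u^k\|_G^2$, which is nonincreasing by Lemma \ref{lem:mono} and summable by the previous paragraph, yields $k\,\|\Delta u^k\|_G^2 \to 0$, i.e. $\|\Delta u^k\|_G^2 = o(1/k)$ as $k \to \infty$.

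I do not expect any serious obstacle here, since the two lemmas have already done the real work. The only points requiring a little care are: checking that Lemma \ref{prop9.20} is valid for all $k \ge 1$ (so the telescoping starts at $k=1$ and the bound $a_1$ is finite, which it is because $u^1$, $y^1$, $y^0$ are well-defined iterates), and stating the elementary ``summable plus monotone implies $o(1/k)$'' lemma cleanly — one should note the sequence $\{\|\Delta u^k\|_G^2\}$ is monotone only from some index on (Lemma \ref{lem:mono} gives it for the full range under Assumption \ref{Ass1}), which is all that is needed. Assembling these pieces gives the claimed estimate.
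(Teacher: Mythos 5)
Your proposal is correct and follows essentially the same route as the paper: telescoping Lemma \ref{prop9.20} to obtain $\sum_k \|\Delta u^k\|_G^2 < \infty$, then combining this with the monotonicity of $\{\|\Delta u^k\|_G^2\}$ from Lemma \ref{lem:mono} via the standard tail-sum argument $\bigl(\tfrac{k}{2}+1\bigr)\|\Delta u^{k+1}\|_G^2 \le \sum_{j=[k/2]}^{k}\|\Delta u^{j+1}\|_G^2 \to 0$. No gaps.
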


\begin{proof}
Let $\bar u$ be a KKT point of (\ref{prob}). From Lemma \ref{prop9.20} 
it follows that
\begin{align}\label{9.23.1}
\sum_{j=1}^k \|\Delta u^{j+1}\|_G^2
& \le \sum_{j=1}^k \left(\left(\|u^j-\bar u\|_G^2 + \|\Delta y^j\|_Q^2\right)
 - \left(\|u^{j+1}-\bar u\|_G^2+\|\Delta y^{j+1}\|_Q^2\right)\right) \nonumber\\
& \le \|u^1-\bar u\|_G^2 +\|\Delta y^1 \|_Q^2
\end{align}
for all $k \ge 1$. By Lemma \ref{lem:mono}, $\{\|\Delta u^{j+1}\|_G^2\}$ is monotonically 
decreasing. Thus
\begin{align}\label{9.23.2}
\left(\frac{k}{2} +1\right)\|\Delta u^{k+1}\|_G^2 \le \sum_{j=[k/2]}^k \|\Delta u^{j+1}\|_G^2,
\end{align}
where $[k/2]$ denotes the largest integer $\le k/2$. Since (\ref{9.23.1}) shows that
$$
\sum_{j=1}^\infty \|\Delta u^{j+1}\|_G^2 <\infty,
$$
the right hand side of (\ref{9.23.2}) must converge to $0$ as $k\to \infty$. 
Thus $(k+1) \|\Delta u^{k+1}\|_G^2 =o(1)$ and hence $\|\Delta u^{k}\|_G^2 =o(1/k)$ 
as $k\to \infty$. 
\end{proof}

As a byproduct of Proposition \ref{lem3} and Corollary \ref{cor2}, we can prove the 
following non-ergodic convergence rate result for the proximal ADMM (\ref{alg1}) in terms 
of the objective error and the constraint error.

\begin{theorem}\label{thm1}
Let Assumption \ref{Ass1} and Assumption \ref{Ass2} hold. Consider the proximal ADMM 
(\ref{alg1}) for solving (\ref{prob}). Then
\begin{align}\label{ner}
|H(x^k, y^k)-H_*| = o\left(\frac{1}{\sqrt{k}}\right)  \quad \mbox{and} \quad 
\|A x^k+B y^k -c\|= o \left(\frac{1}{\sqrt{k}}\right)
\end{align}
as $k \to \infty$.
\end{theorem}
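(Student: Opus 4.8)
The plan is to estimate the two quantities in \eqref{ner} separately, using the key facts already established: the telescoped inequality from Corollary \ref{cor2} (in particular \eqref{admm.111}), the monotonicity of $\{\|u^k-\bar u\|_G^2\}$, and the rate $\|\Delta u^k\|_G^2 = o(1/k)$ from Proposition \ref{lem3}. First, the constraint error is the easy one: since $\rho(A x^k + B y^k - c) = \Delta\la^k$, we have $\|A x^k + B y^k - c\| = \frac{1}{\rho}\|\Delta\la^k\| \le \frac{1}{\sqrt{\rho}}\|\Delta u^k\|_G$, and Proposition \ref{lem3} gives exactly $\|\Delta u^k\|_G = o(1/\sqrt{k})$, hence the second estimate in \eqref{ner}.

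For the objective error I would work from \eqref{admm.111} with $\bar u = (\bar x, \bar y, \bar\la)$ a fixed KKT point. Rearranging and using $\sigma_f, \sigma_g \ge 0$ yields
\begin{align*}
H(x^{k+1}, y^{k+1}) - H_* + \l \bar\la, A x^{k+1} + B y^{k+1} - c\r
\le \tfrac{1}{2}\left(\|u^k-\bar u\|_G^2 - \|u^{k+1}-\bar u\|_G^2\right),
\end{align*}
while \eqref{9.18.11} gives the matching lower bound $0 \le H(x^{k+1},y^{k+1}) - H_* + \l\bar\la, A x^{k+1}+By^{k+1}-c\r$. So the quantity $\epsilon_k := H(x^k,y^k)-H_* + \l\bar\la, Ax^k+By^k-c\r$ is nonnegative and, being bounded by a telescoping difference of the decreasing sequence $\{\|u^k-\bar u\|_G^2\}$, satisfies $\sum_k \epsilon_k < \infty$. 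To upgrade summability to an $o(1/k)$ pointwise rate I would show $\{\epsilon_k\}$ is (eventually) monotonically nonincreasing; then the standard argument used in the proof of Proposition \ref{lem3} — bounding $(k/2+1)\epsilon_k \le \sum_{j=[k/2]}^k \epsilon_j$ and noting the tail sum vanishes — gives $\epsilon_k = o(1/k)$, hence certainly $\epsilon_k = o(1/\sqrt{k})$. Finally, writing $\l\bar\la, A x^k + B y^k - c\r = \frac{1}{\rho}\l\bar\la, \Delta\la^k\r$, which is $O(\|\Delta u^k\|_G) = o(1/\sqrt{k})$, I recover $H(x^k,y^k) - H_* = \epsilon_k - \frac{1}{\rho}\l\bar\la,\Delta\la^k\r = o(1/\sqrt{k})$ — but this only gives the one-sided bound $H(x^k,y^k)-H_* \ge -o(1/\sqrt k)$ directly; for the upper bound I would use Proposition \ref{prop9.18} with a generic $u = (\bar x, \bar y, \la)$ and absorb the extra terms, or alternatively bound $H(x^k,y^k) - H_*$ above by combining $\epsilon_k \ge 0$ with the constraint-error estimate.

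The main obstacle I anticipate is establishing the monotonicity (or near-monotonicity) of $\{\epsilon_k\}$ needed to pass from $\sum \epsilon_k < \infty$ to $\epsilon_k = o(1/k)$: unlike $\|\Delta u^k\|_G^2$, which Lemma \ref{lem:mono} handles cleanly, the objective-plus-linear term $\epsilon_k$ is not obviously monotone, and one likely needs an auxiliary inequality relating $\epsilon_{k+1}$ to $\epsilon_k$ plus controlled error terms involving $\|\Delta u^{k+1}\|_G^2$ and $\|\Delta y^{k+1}\|_Q^2$ (themselves already known to be summable and $o(1/k)$). If exact monotonicity fails, I would instead prove a "quasi-monotonicity" bound $\epsilon_{k+1} \le \epsilon_k + C\|\Delta u^{k+1}\|_G^2$ or similar — using \eqref{11.11.1}, the subdifferential inequalities for $f, g$ at consecutive iterates, and the identity $\rho(A x^{k+1}+By^{k+1}-c) = \Delta\la^{k+1}$ — which still suffices for the $o(1/\sqrt{k})$ conclusion since the correction terms are $o(1/k)$ and summable. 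The signed nature of $\epsilon_k$ versus $|H(x^k,y^k)-H_*|$ also requires care: one must combine the lower bound from \eqref{9.18.11}, the upper bound from Proposition \ref{prop9.18}, and the $o(1/\sqrt k)$ constraint error to pin down the absolute value, rather than just one side.
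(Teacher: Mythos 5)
Your treatment of the constraint error is exactly the paper's: $\rho(Ax^k+By^k-c)=\Delta\la^k$ plus $\|\Delta u^k\|_G^2=o(1/k)$ from Proposition \ref{lem3}. That part is fine.

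For the objective error, however, your route has a genuine unfilled gap, and it is precisely the one you flag yourself. You reduce everything to showing that the nonnegative Lagrangian gap $\epsilon_k := H(x^k,y^k)-H_*+\l\bar\la, Ax^k+By^k-c\r$ satisfies $\epsilon_k=o(1/k)$ (or at least $o(1/\sqrt k)$), and you get there from summability \emph{only if} $\{\epsilon_k\}$ is monotone or quasi-monotone. Summability of a nonnegative sequence alone does not give $\epsilon_k=o(1/\sqrt k)$: take $\epsilon_k=k^{-1/2}$ for $k=2^m$ and $\epsilon_k=0$ otherwise; the series converges but $\sqrt{k}\,\epsilon_k=1$ along a subsequence. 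So the monotonicity step is not a technical nicety but the entire content of the argument, and you do not establish it; a quasi-monotonicity inequality $\epsilon_{k+1}\le\epsilon_k+C\|\Delta u^{k+1}\|_G^2$ is plausible but would itself require a nontrivial derivation from (\ref{11.11.1}) that you only sketch in outline. As written, the proof is incomplete.

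The paper sidesteps this issue entirely. Starting from (\ref{admm.111}) it does not try to exploit summability of the telescoping differences; instead it rewrites the single difference as
\begin{align*}
\tfrac{1}{2}\left(\|u^{k-1}-\bar u\|_G^2-\|u^{k}-\bar u\|_G^2\right)
=-\l u^{k-1}-\bar u, G\Delta u^{k}\r-\tfrac{1}{2}\|\Delta u^{k}\|_G^2
\le \|u^{k-1}-\bar u\|_G\,\|\Delta u^{k}\|_G,
\end{align*}
and uses the monotonicity of $\{\|u^k-\bar u\|_G\}$ from Corollary \ref{cor2} to bound the first factor by $\|u^0-\bar u\|_G$. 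Together with $|\l\bar\la, Ax^k+By^k-c\r|\le\rho^{-1/2}\|\bar\la\|\,\|\Delta u^k\|_G$ this gives the two-sided bound $|H(x^k,y^k)-H_*|\le C\|\Delta u^k\|_G=o(1/\sqrt k)$ in two lines, with no need for any monotonicity of the objective gap. I recommend you adopt this Cauchy--Schwarz step in place of the summability argument. One further remark: your worry at the end about only getting a one-sided bound is a red herring --- if you had $\epsilon_k=o(1/\sqrt k)$, then $H(x^k,y^k)-H_*=\epsilon_k-\rho^{-1}\l\bar\la,\Delta\la^k\r$ would be a difference of two $o(1/\sqrt k)$ quantities and hence $o(1/\sqrt k)$ in absolute value; the real difficulty sits earlier, in the missing monotonicity.
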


\begin{proof}
Since  
\begin{align}\label{eq:sbl.2}
\rho (A x^{k} + B y^{k}-c) = \Delta \la^{k} \quad \mbox{and} \quad 
\|\Delta \la^{k}\|^2 \le \rho \|\Delta u^{k}\|_G^2
\end{align}
we may use Proposition \ref{lem3} to obtain the estimate 
$\|A x^{k}+B y^{k} -c\| = o (1/\sqrt{k})$ as $k\rightarrow \infty$. 

In the following we will focus on deriving the estimate of $|H(x^k, y^k) - H_*|$.  
Let $\bar u := (\bar x, \bar y, \bar \la)$ be a KKT point of (\ref{prob}). By using 
(\ref{admm.111}) we have 
\begin{align}\label{10.30.1}
H(x^{k}, y^{k}) - H_* 
& \le - \l \bar \la, A x^{k}+By^{k}-c\r 
+ \frac{1}{2} \left(\|u^{k-1}-\bar u\|_G^2 - \|u^{k}-\bar u\|_G^2\right) \nonumber \\
& = -\frac{1}{\rho} \l \bar \la, \Delta \la^{k}\r -\l u^{k-1}-\bar u, G \Delta u^{k}\r 
- \frac{1}{2} \|\Delta u^{k}\|_G^2 \nonumber \\
& \le \frac{\|\bar \la\|}{\rho} \|\Delta \la^{k}\| + \|u^{k-1}-\bar u\|_G \|\Delta u^{k}\|_G.
\end{align}
By virtue of the monotonicity of $\{\|u^k-\bar u\|_G^2\}$ given in Corollary \ref{cor2} 
we then obtain 
\begin{align*}
H(x^{k}, y^{k}) - H_*
& \le \frac{\|\bar \la\|}{\rho} \|\Delta \la^{k}\| 
+ \|u^0-\bar u\|_G \|\Delta u^{k}\|_G \nonumber \\
& \le \left(\|u^0-\bar u\|_G + \frac{\|\bar \la\|}{\sqrt{\rho}}\right) \|\Delta u^{k}\|_G.
\end{align*}
On the other hand, by using (\ref{9.18.11}) we have
\begin{align*}
H(x^{k}, y^{k}) - H_*
& \ge -\l \bar \la, A x^{k} + B y^{k} -c\r = -\frac{1}{\rho} \l \bar \la, \Delta \la^{k}\r \\
& \ge -\frac{\|\bar \la\|}{\rho} \|\Delta \la^{k}\| 
\ge - \frac{\|\bar \la\|}{\sqrt{\rho}} \|\Delta u^{k}\|_G.
\end{align*}
Therefore
\begin{align}\label{eq:sbl}
\left|H(x^{k}, y^{k}) - H_*\right|
 \le \left(\|u^0-\bar u\|_G + \frac{\|\bar \la\|}{\sqrt{\rho}}\right) \|\Delta u^{k}\|_G.
\end{align}
Now we can use Proposition \ref{lem3} to conclude the proof. 
\end{proof}

\begin{remark}\label{Rk2.1}
By exploiting the connection between the Douglas-Rachford splitting algorithm and 
the classical ADMM (\ref{admm}), the non-ergodic convergence rate (\ref{ner}) has 
been established in \cite{DY2016} for the classical ADMM (\ref{admm}) under the 
conditions that 
\begin{align}\label{DR1}
\mbox{zero}(\p d_f + \p d_g) \ne \emptyset
\end{align}
and 
\begin{align}\label{DR2}
\p d_f = A^*\circ \p f^* \circ A, \qquad \p d_g = B^*\circ \p g^* \circ B - c,
\end{align}
where $d_f (\la) := f^*(A^* \la)$ and $d_g(\la) := g^*(B^*\la)-\l \la, c\r$ with $f^*$ 
and $g^*$ denoting the convex conjugates of $f$ and $g$ respectively. The conditions 
(\ref{DR1}) and (\ref{DR2}) seems strong and unnatural because they are posed on 
the convex conjugates $f^*$ and $g^*$ instead of $f$ and $g$ themselves. 
In Theorem \ref{thm1} we establish the non-ergodic convergence rate (\ref{ner}) for 
the proximal ADMM (\ref{alg1}) with any positive semi-definite $P$ and $Q$ without 
requiring the conditions (\ref{DR1}) and (\ref{DR2}) and therefore our result extends 
and improves the one in \cite{DY2016}.
\end{remark}

Next we will consider establishing faster convergence rates under suitable regularity 
conditions. As a basis, we first prove the following result
which tells that any weak cluster point of $\{u^k\}$ is a KKT point of (\ref{prob}). 
This result can be easily established for ADMM in finite-dimensional spaces, however 
it is nontrivial for the proximal ADMM (\ref{alg1}) in infinite-dimensional Hilbert 
spaces due to the required treatment of weak convergence; Proposition \ref{prop9.18} 
plays a crucial role in our proof. 

\begin{theorem}\label{thm2:ADMM}
Let Assumption \ref{Ass1} and Assumption \ref{Ass2} hold. Consider the sequence 
$\{u^k:=(x^k, y^k, \la^k)\}$ generated by the proximal ADMM (\ref{alg1}). Assume 
$\{u^k\}$ is bounded and let $u^\dag :=(x^\dag, y^\dag, \la^\dag)$ be a 
weak cluster point of $\{u^k\}$. Then $u^\dag$ is a KKT point of (\ref{prob}).
Moreover, for any weak cluster point $u^*$ of $\{u^k\}$ there holds 
$
\|u^*-u^\dag\|_G =0. 
$
\end{theorem}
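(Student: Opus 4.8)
The plan is to extract a weakly convergent subsequence $u^{k_i}\rightharpoonup u^\dag=(x^\dag,y^\dag,\la^\dag)$ and verify the three KKT relations in Assumption \ref{Ass2}. First I would use Proposition \ref{lem3}, which gives $\|\Delta u^k\|_G\to 0$, together with the identity $\rho(Ax^k+By^k-c)=\Delta\la^k$ and the bound $\|\Delta\la^k\|^2\le\rho\|\Delta u^k\|_G^2$ from \eqref{eq:sbl.2}; hence $Ax^k+By^k-c\to 0$ strongly, and since $A,B$ are bounded (hence weakly continuous), passing to the limit along the subsequence yields $Ax^\dag+By^\dag-c=0$. Likewise $P\Delta x^{k}\to 0$ and $Q\Delta y^{k}\to 0$ in norm (they are controlled by $\|\Delta u^k\|_G$), and $\rho B\Delta y^{k}$ need not vanish but $\|\Delta\la^k\|\to0$, so $\tilde\la^{k+1}=\la^{k+1}-\rho B\Delta y^{k+1}$ and $\la^{k+1}$ differ from $\la^k$ by a term going to zero in the sense needed below.

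The heart of the argument is to pass to the limit in the subdifferential inclusions \eqref{11.11.1}. Rather than fight with weak continuity of $\p f$ and $\p g$ directly, I would use Proposition \ref{prop9.18} as a variational inequality: for every fixed $u=(x,y,\la)$,
\begin{align*}
0 &\le H(x,y)-H(x^{k+1},y^{k+1})+\l\tilde\la^{k+1},Ax+By-c\r-\l\la,Ax^{k+1}+By^{k+1}-c\r\\
&\quad +\tfrac12\left(\|u^k-u\|_G^2-\|u^{k+1}-u\|_G^2\right)-\tfrac1{2\rho}\|\Delta\la^{k+1}-\rho B\Delta y^{k+1}\|^2-\tfrac12\|\Delta x^{k+1}\|_P^2-\tfrac12\|\Delta y^{k+1}\|_Q^2.
\end{align*}
Along the subsequence the $G$-difference telescopes to something that, using monotonicity of $\{\|u^k-\bar u\|_G^2\}$ from Corollary \ref{cor2}, has a limit; the last three negative terms tend to $0$; $\l\la,Ax^{k+1}+By^{k+1}-c\r\to0$; and $\tilde\la^{k+1}\rightharpoonup\la^\dag$ while $Ax+By-c$ is fixed. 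The only delicate point is the term $-H(x^{k+1},y^{k+1})$: by weak lower semicontinuity of $f$ and $g$ we only get $\liminf H(x^{k_i},y^{k_i})\ge H(x^\dag,y^\dag)$, which is the correct direction to keep the inequality. Taking $\liminf$ along $k_i$ therefore yields
$$
0\le H(x,y)-H(x^\dag,y^\dag)+\l\la^\dag,Ax+By-c\r-\l\la,Ax^\dag+By^\dag-c\r
$$
for all $(x,y,\la)$. Since $Ax^\dag+By^\dag-c=0$, the last term vanishes; choosing $\la$ free forces $Ax^\dag+By^\dag=c$ (already known) and then $x\mapsto$ the inequality with $y=y^\dag$ gives $f(x)-f(x^\dag)+\l A^*\la^\dag,x-x^\dag\r\ge0$ for all $x$, i.e. $-A^*\la^\dag\in\p f(x^\dag)$; symmetrically $-B^*\la^\dag\in\p g(y^\dag)$. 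Thus $u^\dag$ is a KKT point.

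For the final claim, once $u^\dag$ is known to be a KKT point, Corollary \ref{cor2} applies with $\bar u=u^\dag$, so $\{\|u^k-u^\dag\|_G^2\}$ is monotonically decreasing, hence convergent; call its limit $\ell\ge0$. If $u^*$ is any other weak cluster point, say $u^{m_j}\rightharpoonup u^*$, expand $\|u^{m_j}-u^\dag\|_G^2=\|u^{m_j}-u^*\|_G^2+2\l u^{m_j}-u^*,G(u^*-u^\dag)\r+\|u^*-u^\dag\|_G^2$; since $G$ is bounded self-adjoint, $G(u^*-u^\dag)$ is a fixed vector and the cross term tends to $0$, giving $\ell=\lim_j\|u^{m_j}-u^*\|_G^2+\|u^*-u^\dag\|_G^2$. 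Running the same computation with the roles of $u^\dag$ and $u^*$ swapped (legitimate because $u^*$, being a weak cluster point of a KKT-admitting sequence, is itself a KKT point by the first part, so $\{\|u^k-u^*\|_G^2\}$ also converges, to some $\ell'$) yields $\ell'=\lim_j\|u^{m_j}-u^\dag\|_G^2+\|u^*-u^\dag\|_G^2=\ell+\|u^*-u^\dag\|_G^2$ and, symmetrically, $\ell=\ell'+\|u^*-u^\dag\|_G^2$; adding gives $2\|u^*-u^\dag\|_G^2=0$, so $\|u^*-u^\dag\|_G=0$. The main obstacle is the one flagged above — handling $H(x^{k+1},y^{k+1})$ correctly under only weak convergence — and the resolution is precisely that weak lower semicontinuity pushes this term in the favorable direction when one takes $\liminf$ of the Proposition \ref{prop9.18} inequality.
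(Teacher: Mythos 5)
Your proposal is correct and follows essentially the same route as the paper: Proposition \ref{prop9.18} is used as a variational inequality, the limit is taken along a weakly convergent subsequence to obtain the KKT inclusions, and the uniqueness of the $G$-limit is obtained by the same Opial-type identity; the only cosmetic difference is that you invoke weak lower semicontinuity of $H$ directly inside the $\liminf$, whereas the paper first passes to the limit using $H(x^k,y^k)\to H_*$ from Theorem \ref{thm1} and then verifies $H(x^\dag,y^\dag)=H_*$ separately. One step needs a more careful justification: for an \emph{arbitrary} $u$ the sequence $\{\|u^k-u\|_G^2\}$ need not be monotone (Corollary \ref{cor2} applies only when the reference point is a KKT point), so to conclude that $\|u^{k_i-1}-u\|_G^2-\|u^{k_i}-u\|_G^2\to 0$ you should instead bound this difference by $\bigl(\|u^{k_i-1}-u\|_G+\|u^{k_i}-u\|_G\bigr)\|\Delta u^{k_i}\|_G$ and use the boundedness of $\{u^k\}$ together with Proposition \ref{lem3}, as the paper does; this is a one-line repair and does not affect the rest of your argument.
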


\begin{proof}
We first show that $u^\dag$ is a KKT point of (\ref{prob}). 
According to Propositon \ref{lem3} we have $\|\Delta u^k\|_G^2 \to 0$ which means 
\begin{align}\label{9.18.12}
\Delta \la^k \to 0, \quad P \Delta x^k \to 0, \quad B \Delta y^k \to 0, \quad Q\Delta y^k\to 0
\end{align}
as $k\to \infty$. According to Theorem \ref{thm1} we also have 
\begin{align}\label{admm-lc.1}
A x^k + B y^k -c \to 0 \quad \mbox{and} \quad H(x^k, y^k) \to H_* \quad \mbox{as } k \to \infty.
\end{align}
Since $u^\dag$ is a weak cluster point of the sequence $\{u^k\}$, there exists a subsequence 
$\{u^{k_j}:= (x^{k_j}, y^{k_j},\la^{k_j})\}$ of $\{u^k\}$ such that $u^{k_j} \rightharpoonup u^\dag$ 
as $j \to \infty$. By using the first equation in (\ref{admm-lc.1}) we immediately obtain 
\begin{align}\label{admm-lc.43}
A x^\dag + B y^\dag -c =0.
\end{align}
By using Proposition \ref{prop9.18} with $k = k_j -1$ we have for any 
$u := (x, y, \la) \in \X \times \Y \times \Z$ that
\begin{align}\label{admm-lc.41}
0 &\le H(x, y) - H(x^{k_j}, y^{k_j}) + \l\la^{k_j} - \rho B \Delta y^{k_j}, A x + B y - c\r \nonumber \\
& \quad - \l \la, A x^{k_j} + B y^{k_j} -c\r 
+ \frac{1}{2} \left(\|u^{k_j-1} - u\|_G^2 - \|u^{k_j} - u\|_G^2\right). 
\end{align}
According to Corollary \ref{cor2}, $\{\|u^k\|_G\}$ is bounded. Thus we may use 
Proposition \ref{lem3} to conclude 
$$
\left|\|u^{k_j-1} - u\|_G^2 - \|u^{k_j} - u\|_G^2\right| 
\le \left(\|u^{k_j-1} - u\|_G + \|u^{k_j} - u\|_G\right) \|\Delta u^{k_j}\|_G \to 0
$$
as $j \to \infty$. Therefore, by taking $j \to \infty$ in (\ref{admm-lc.41}) and using 
(\ref{9.18.12}), (\ref{admm-lc.1}) and $\la^{k_j} \rightharpoonup \la^\dag$ we can obtain 
\begin{align}\label{admm-lc.44}
0 \le H(x, y) - H_* + \l \la^\dag, A x + B y -c\r
\end{align}
for all $(x, y) \in \X \times \Y$. Since $f$ and $g$ are convex and lower semi-continuous, 
they are also weakly lower semi-continuous (see \cite[Chapter 1, Corollary 2.2]{ET1976}). 
Thus, by using $x^{k_j} \rightharpoonup x^\dag$ and $y^{k_j} \rightharpoonup y^\dag$ we obtain 
\begin{align*}
H(x^\dag, y^\dag) 
& = f(x^\dag) + g(y^\dag) \le \liminf_{j\to \infty} f(x^{k_j}) 
+ \liminf_{j\to \infty} g(y^{k_j}) \\
& \le \liminf_{j\to \infty} \left(f(x^{k_j}) + g(y^{k_j}) \right) \\
& = \liminf_{j \to \infty} H(x^{k_j}, y^{k_j}) = H_*.
\end{align*}
Since $(x^\dag, y^\dag)$ satisfies (\ref{admm-lc.43}), we also have $H(x^\dag, y^\dag) \ge H_*$. 
Therefore $H(x^\dag, y^\dag) = H_*$ and then it follows from (\ref{admm-lc.44}) and (\ref{admm-lc.43}) 
that 
$$
0 \le H(x, y) - H(x^\dag, y^\dag) + \l \la^\dag, A(x - x^\dag) + B (y - y^\dag)\r 
$$
for all $(x, y) \in \X \times \Y$. Using the definition of $H$ we can immediately see that 
$-A^* \la^\dag \in \p f(x^\dag)$ and $-B^* \la^\dag \in \p g(y^\dag)$. Therefore $u^\dag$ 
is a KKT point of (\ref{prob}). 

Let $u^*$ be another weak cluster point of $\{u^k\}$. Then there exists a subsequence 
$\{u^{l_j}\}$ of $\{u^k\}$ such that $u^{l_j} \rightharpoonup u^*$ as $j \to \infty$. 
Noting the identity 
\begin{align}\label{admm-conv}
2\l u^k, G(u^* - u^\dag)\r = \|u^k - u^\dag\|_G^2 - \|u^k - u^*\|_G^2 - \|u^\dag\|_G^2 + \|u^*\|_G^2.
\end{align}
Since both $u^*$ and $u^\dag$ are KKT points of (\ref{prob}) as shown above, 
it follows from Corollary \ref{cor2} that both $\{\|u^k - u^\dag\|_G^2\}$ and 
$\{\|u^k - u^*\|_G^2\}$ are monotonically decreasing and thus converge as 
$k \to \infty$. By taking $k = k_j$ and $k=l_j$ in (\ref{admm-conv}) respectively 
and letting $j \to \infty$ we can see that, for the both cases, the right hand 
side tends to the same limit. Therefore 
\begin{align*}
\l u^*, G(u^*-u^\dag)\r & = \lim_{j\to \infty} \l u^{l_j}, G(u^*-u^\dag)\r \\
& = \lim_{j\to \infty} \l u^{k_j}, G(u^*-u^\dag)\r \\
& = \l u^\dag, G(u^*-u^\dag)\r 
\end{align*}
which implies $\|u^*-u^\dag\|_G^2 =0$. 
\end{proof}

\begin{remark}\label{Rk.2}
{\rm 
Theorem \ref{thm2:ADMM} requires $\{u^k\}$ to be bounded. According to Corollary 
\ref{cor2}, $\{\|u^k\|_G^2\}$ is bounded which implies the boundedness of $\{\la^k\}$.
In the following we will provide sufficient conditions to guarantee the boundedness 
of $\{(x^k, y^k)\}$. 

\begin{enumerate}[leftmargin = 0.7cm]
\item[(i)] From (\ref{eq:cor.1})  
it follows that $\{\sigma_f \|x^k\|^2 + \sigma_g \|y^k\|^2 + \|u^k\|_G^2\}$ is bounded. 
By the definition of $G$, this in particular implies the boundedness of $\{\la^k\}$ and 
$\{B y^k\}$. Consequently, it follows from $\Delta \la^k = \rho(A x^k + B y^k - c)$
that $\{A x^k\}$ is bounded. Putting the above together we can conclude that both 
$\{(\sigma_f I + P + A^*A) x^k\}$ and $\{(\sigma_g I + Q + B^* B) y^k\}$ are bounded. 
Therefore, if both the bounded linear self-adjoint operators 
$$
\sigma_f I + P + A^*A \quad \mbox{and} \quad \sigma_g I + Q + B^*B
$$ 
are coercive, we can conclude the boundedness of $\{x^k\}$ and $\{y^k\}$. Here a 
linear operator $L: \V\to \H$ between two Hilbert spaces $\V$ and $\H$ is called 
coercive if $\|Lv\|\to \infty$ whenever $\|v\|\to \infty$. It is easy to see that $L$ 
is coercive if and only if there is a constant $c>0$ such that $c\|v\|\le \|Lv\|$ 
for all $v\in \V$.

\item[(ii)] If there exist $\beta>H_*$ and $\sigma>0$ such that the set 
$$
\{(x, y) \in \X\times \Y: H(x, y)\le \beta \mbox{ and } \|A x + B y -c\| \le \sigma\}
$$
is bounded, then $\{(x^k,y^k)\}$ is bounded. In fact, since $H(x^k, y^k)\to H_*$ and 
$A x^k + B y^k - c\to 0$ as shown in Theorem \ref{thm1}, the sequence $\{(x^k, y^k)\}$ is 
contained in the above set except for finite many terms. Thus $\{(x^k, y^k)\}$ 
is bounded.
\end{enumerate}
}
\end{remark}


\begin{remark} 
It is interesting to investigate under what conditions $\{u^k\}$ has a unique weak 
cluster point. According to Theorem \ref{thm2:ADMM}, for any two weak cluster points 
$u^* := (x^*, y^*, \la^*)$ and $u^\dag:= (x^\dag, y^\dag, \la^\dag)$ of $\{u^k\}$ 
there hold
\begin{align*}
& \|u^* - u^\dag\|_G^2 =0, \quad Ax^* + By^* = c, \quad -A^* \la^* \in \p f(x^*), 
\quad -B^* \la^* \in \p g(y^*), \\ 
& A x^\dag + B y^\dag = c, \quad -A^* \la^\dag \in \p f(x^\dag), \quad 
- B^* \la^\dag \in \p g(y^\dag).
\end{align*}
By using the definition of $G$ and the monotonicity of $\p f$ and $\p g$ we can deduce that 
\begin{align*} 
& \la^* = \la^\dag, \quad P(x^*-x^\dag) =0, \quad Q(y^*-y^\dag) = 0, \quad B(y^* - y^\dag) = 0, \\
& A(x^* - x^\dag) = 0, \quad \sigma_f \|x^* - x^\dag\|^2 =0, \quad \sigma_g \|y^* - y^\dag\|^2=0.
\end{align*}
Consequently 
$$
(\sigma_f I + P + A^*A) (x^*-x^\dag) =0 \quad \mbox{ and } \quad 
(\sigma_g I + Q + B^* B) (y^* - y^\dag) = 0.
$$
Therefore, if both $\sigma_f I + P + A^* A$ and $\sigma_g I + Q + B^* B$ are injective,
then $x^* = x^\dag$ and $y^* = y^\dag$ and hence $\{u^k\}$ has a unique weak cluster point, 
say $u^\dag$; consequently $u^k \rightharpoonup u^\dag$ as $k \to \infty$. 
\end{remark}

\begin{remark}
In \cite{Sun2019} the proximal ADMM (with relaxation) has been considered under 
the condition that 
\begin{align}\label{3.5}
P + \rho A^*A + \p f  \mbox{ and } Q + \rho B^*B + \p g \mbox{ are strongly maximal monotone}.
\end{align}
which requires both $(P + \rho A^*A + \p f)^{-1}$ and $(Q + \rho B^*B + \p g)^{-1}$ exist 
as single valued mappings and are Lipschitz continuous. It has been shown that the 
iterative sequence converges weakly to a KKT point which is its unique weak cluster point.  
The argument in \cite{Sun2019} used the facts that the KKT mapping $F(u)$, defined in 
(\ref{eq:F}) below, is maximal monotone and maximal monotone operators are closed 
under the weak-strong topology (\cite{AS2009b,BC2011}). Our argument is essentially 
based on Proposition 2.1, it is elementary and does not rely on any machinery from 
the maximal monotone operator theory.
\end{remark}

Based on Theorem \ref{thm2:ADMM}, we now devote to deriving convergence rates of 
the proximal ADMM (\ref{alg1}) under certain regularity conditions. To this end, we introduce 
the multifuncton $F: \X \times \Y \times \Z \rightrightarrows \X \times \Y \times \Z$ 
defined by 
\begin{align}\label{eq:F}
F(u):= \left(\begin{array}{ccc}
\p f(x) + A^* \la\\
\p g(y) + B^* \la\\
A x + By -c
\end{array}\right), \quad \forall u = (x, y, \la)\in \X \times \Y \times \Z.
\end{align}
Then $\bar u$ is a KKT point of (\ref{prob}) means $0 \in F(\bar u)$ or, equivalently, 
$\bar u\in F^{-1}(0)$, where $F^{-1}$ denotes the inverse multifunction of $F$. 
We will achieve our goal under certain bounded (H\"{o}lder) metric subregularity 
conditions of $F$. We need the following calculus lemma.

\begin{lemma}\label{lem5}
Let $\{\Delta_k\}$ be a sequence of nonnegative numbers satisfying
\begin{align}\label{5.8.6}
\Delta_k^\theta \le C (\Delta_{k-1}-\Delta_k)
\end{align}
for all $k\ge 1$, where $C>0$ and $\theta>1$ are constants. Then there is a constant $\tilde C>0$ such that
$$
\Delta_k \le \tilde C (1+ k)^{-\frac{1}{\theta-1}}
$$
for all $k\ge 0$.
\end{lemma}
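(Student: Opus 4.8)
The plan is to show that the reciprocal powers $\Delta_k^{-(\theta-1)}$ grow at least linearly in $k$, from which the claimed decay of $\Delta_k$ follows by inverting. First I would record the easy preliminaries: since $\Delta_k^\theta \ge 0$, the hypothesis forces $\Delta_{k-1} \ge \Delta_k$, so $\{\Delta_k\}$ is non-increasing; and if $\Delta_{k_0} = 0$ for some $k_0$, then $\Delta_k = 0$ for all $k \ge k_0$ (non-negative and non-increasing), while the finitely many earlier terms are all $\le \Delta_0$, so the asserted bound holds after enlarging the constant. Hence we may assume $\Delta_k > 0$ for all $k$.

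Write $\alpha := \theta - 1 > 0$. The core of the argument is an estimate for the increment $\Delta_k^{-\alpha} - \Delta_{k-1}^{-\alpha} \ge 0$, obtained by a dichotomy. Suppose first that $\Delta_{k-1} \le 2\Delta_k$. Applying the mean value theorem to $h(t) = t^{-\alpha}$, with $h'(t) = -\alpha t^{-\alpha-1}$, there is $\xi \in [\Delta_k, \Delta_{k-1}]$ with $\Delta_k^{-\alpha} - \Delta_{k-1}^{-\alpha} = \alpha \xi^{-\alpha-1}(\Delta_{k-1}-\Delta_k)$; since $\xi \le \Delta_{k-1} \le 2\Delta_k$ and $\Delta_{k-1}-\Delta_k \ge C^{-1}\Delta_k^\theta$ by hypothesis, and since $\theta = \alpha+1$, this yields
$$
\Delta_k^{-\alpha} - \Delta_{k-1}^{-\alpha} \ge \frac{\alpha}{2^{\alpha+1}C}\,\Delta_k^{-\alpha-1}\Delta_k^\theta = \frac{\alpha}{2^{\alpha+1}C} =: \mu > 0 .
$$
In the opposite case $\Delta_{k-1} > 2\Delta_k$ one has directly $\Delta_k^{-\alpha} > 2^\alpha \Delta_{k-1}^{-\alpha}$, hence $\Delta_k^{-\alpha} - \Delta_{k-1}^{-\alpha} > (2^\alpha - 1)\Delta_{k-1}^{-\alpha} \ge (2^\alpha-1)\Delta_0^{-\alpha} =: \nu > 0$, where the last step uses the monotonicity $\Delta_{k-1} \le \Delta_0$. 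Setting $c := \min\{\mu,\nu\}$, in both cases $\Delta_k^{-\alpha} - \Delta_{k-1}^{-\alpha} \ge c$ for every $k \ge 1$.

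Summing this telescoping inequality from $1$ to $k$ gives $\Delta_k^{-\alpha} \ge \Delta_0^{-\alpha} + ck \ge \min\{c,\Delta_0^{-\alpha}\}\,(1+k)$, and raising both sides to the power $-1/\alpha$ produces $\Delta_k \le \tilde C (1+k)^{-1/\alpha}$ with $\tilde C := \min\{c,\Delta_0^{-\alpha}\}^{-1/(\theta-1)}$, as claimed. The only delicate point is the second branch of the dichotomy: there the increment is controlled by $\Delta_{k-1}^{-\alpha}$ rather than by an absolute constant, so one must invoke the monotonicity of $\{\Delta_k\}$ to bound it below uniformly by $(2^\alpha-1)\Delta_0^{-\alpha}$; in the first branch the work is entirely in the exponent bookkeeping $\theta = \alpha+1$, which makes $\Delta_k^{-\alpha-1}\Delta_k^\theta$ collapse to $1$.
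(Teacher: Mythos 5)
Your proof is correct and self-contained; the dichotomy on whether $\Delta_{k-1}\le 2\Delta_k$, with the mean value theorem applied to $t\mapsto t^{-(\theta-1)}$ in the first branch and the monotonicity bound $\Delta_{k-1}^{-(\theta-1)}\ge\Delta_0^{-(\theta-1)}$ in the second, followed by telescoping, is exactly the classical argument. The paper itself gives no proof here and merely refers to \cite{AB2009}, whose Theorem 2 is proved by essentially this same technique, so your proposal matches the intended route.
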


\begin{proof}
Please refer to the proof of \cite[Theorem 2]{AB2009}. 
\end{proof}

\begin{theorem}\label{thm4:ADMM}
Let Assumption \ref{Ass1} and Assumption \ref{Ass2} hold. Consider the sequence 
$\{u^k:=(x^k, y^k, \la^k)\}$ generated by the proximal ADMM (\ref{alg1}). Assume 
$\{u^k\}$ is bounded and let $u^\dag :=(x^\dag, y^\dag, \la^\dag)$ be a 
weak cluster point of $\{u^k\}$. Let $R$ be a number such that 
$\|u^k - u^\dag\| \le R$ for all $k$ and assume that there exist $\kappa>0$ 
and $\a \in (0, 1]$ such that 
\begin{align}\label{HMS}
d(u, F^{-1}(0)) \le \kappa [d(0, F(u))]^\a, \quad \forall u \in B_R(u^\dag).
\end{align}
\begin{enumerate}[leftmargin = 0.8cm]
\item[\emph{(i)}] If $\a = 1$, then there exists a constant $0<q<1$ such that
\begin{align}\label{eq:lc}
\|u^{k+1} - u^\dag\|_G^2  + \|\Delta y^{k+1}\|_Q^2
\le q^2 \left( \|u^k - u^\dag\|_G^2 + \|\Delta y^k\|_Q^2\right)
\end{align}
for all $k \ge 0$ and consequently there exist $C>0$ and $0<q<1$ such that 
\begin{align}\label{eq:lc.1}
\begin{split}
\|u^k - u^\dag\|_G, \, \|\Delta u^k\|_G & \le C q^k, \\
\|A x^k + B y^k - c\| & \le C q^k, \\ 
|H(x^k, y^k) - H_*| & \le C q^k
\end{split}
\end{align}
for all $k \ge 0$. 

\item[\emph{(ii)}] If $\a \in (0, 1)$ then there is a constant $C$ such that
\begin{align}\label{eq:HCR}
\|u^k - u^\dag\|_G^2 + \|\Delta y^k\|_Q^2 \le C (k+1)^{-\frac{\a}{1-\a}}
\end{align}
and consequently
\begin{align}\label{eq:HCR.1}
\begin{split}
\|u^k - u^\dag\|_G, \, \|\Delta u^k \|_G & \le C (k+1)^{-\frac{1}{2(1-\a)}}, \\
\|A x^k + B y^k - c\| & \le C (k+1)^{-\frac{1}{2(1-\a)}}, \\
|H(x^k, y^k) - H_*| & \le C (k+1)^{- \frac{1}{2(1-\a)}}
\end{split}
\end{align}
for all $k \ge 0$.
\end{enumerate}
\end{theorem}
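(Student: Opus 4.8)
The plan is to combine the key descent inequality already established (Lemma~\ref{prop9.20}) with the metric subregularity hypothesis (\ref{HMS}), by first showing that the residual $d(0, F(u^k))$ is controlled by the one-step change $\|\Delta u^{k+1}\|_G$ and related quantities. Define $\Gamma_k := \|u^k - u^\dag\|_G^2 + \|\Delta y^k\|_Q^2$; Lemma~\ref{prop9.20} gives $\|\Delta u^{k+1}\|_G^2 \le \Gamma_k - \Gamma_{k+1}$, so it suffices to lower-bound $\|\Delta u^{k+1}\|_G^2$ (plus perhaps $\|\Delta y^{k+1}\|_Q^2$) in terms of $\Gamma_{k+1}$.

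The bridge is an estimate of the form $d(u^{k+1}, F^{-1}(0)) \le M\,\xi_{k+1}$ where $\xi_{k+1}$ collects $\|\Delta x^{k+1}\|_P$, $\|\Delta y^{k+1}\|_{\widehat Q}$, $\|\Delta\la^{k+1}\|$, i.e.\ $\xi_{k+1}^2 \lesssim \|\Delta u^{k+1}\|_G^2$. To get this, first exhibit a point $w^{k+1} \in F(u^{k+1})$ with small norm: using the optimality conditions (\ref{11.11.1}), $-A^*(\la^{k+1} - \rho B\Delta y^{k+1}) - P\Delta x^{k+1} \in \p f(x^{k+1})$ gives $-\rho A^* B \Delta y^{k+1} - P\Delta x^{k+1} \in \p f(x^{k+1}) + A^*\la^{k+1}$; similarly $-Q\Delta y^{k+1} \in \p g(y^{k+1}) + B^*\la^{k+1}$; and $A x^{k+1} + B y^{k+1} - c = \Delta\la^{k+1}/\rho$. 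Hence $d(0, F(u^{k+1})) \le \|(\rho A^*B\Delta y^{k+1} + P\Delta x^{k+1},\, Q\Delta y^{k+1},\, \Delta\la^{k+1}/\rho)\| \le c_1 \|\Delta u^{k+1}\|_G$ for a constant $c_1$ depending on $\rho, \|A\|, \|B\|, \|P\|, \|Q\|$ and the fact that $\|\cdot\|_P, \|\cdot\|_Q \le \|\cdot\|_{\widehat Q}$ control the Euclidean parts. Since $u^{k+1} \in B_R(u^\dag)$, (\ref{HMS}) yields $d(u^{k+1}, F^{-1}(0)) \le \kappa c_1^\a \|\Delta u^{k+1}\|_G^\a$.

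Next I would convert the distance-to-$F^{-1}(0)$ bound into a bound on $\Gamma_{k+1}$. Pick $\hat u^{k+1} \in F^{-1}(0)$, a KKT point, realizing (approximately) the distance $d(u^{k+1}, F^{-1}(0))$. By Corollary~\ref{cor2} the sequence $\{\|u^k - \hat u^{k+1}\|_G^2\}$ is non-increasing, and more importantly Lemma~\ref{prop9.20} applies with $\bar u = \hat u^{k+1}$: $\Gamma_{k+1}^{(\hat u^{k+1})} \le \Gamma_k^{(\hat u^{k+1})}$ where $\Gamma_m^{(\bar u)} := \|u^m - \bar u\|_G^2 + \|\Delta y^m\|_Q^2$. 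The key point is $\|u^{k+1} - \hat u^{k+1}\|_G^2 \le \|G\|\, d(u^{k+1}, F^{-1}(0))^2 \le \|G\| \kappa^2 c_1^{2\a} \|\Delta u^{k+1}\|_G^{2\a}$, and $\|\Delta y^{k+1}\|_Q^2 \le \|\Delta u^{k+1}\|_G^2 \le \|\Delta u^{k+1}\|_G^{2\a}$ (once $\|\Delta u^{k+1}\|_G \le 1$, which holds for large $k$ by Proposition~\ref{lem3}); actually one must be careful that the $G$-norm is only a seminorm, so strictly one bounds $\|u^{k+1}-\hat u^{k+1}\|_G$ using $\|u^{k+1}-\hat u^{k+1}\| \ge$ the distance only up to the operator norm of $\sqrt{G}$, which is fine. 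Combining, $\Gamma_{k+1} \le \|u^{k+1} - \hat u^{k+1}\|_G^2 + \|\Delta y^{k+1}\|_Q^2 \le C_2 \|\Delta u^{k+1}\|_G^{2\a} \le C_2 (\Gamma_k - \Gamma_{k+1})^\a$, i.e.\ $\Gamma_{k+1}^{1/\a} \le C_2^{1/\a}(\Gamma_k - \Gamma_{k+1})$.

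Finally I would run the two cases. If $\a = 1$: $\Gamma_{k+1} \le C_2(\Gamma_k - \Gamma_{k+1})$, so $\Gamma_{k+1} \le \frac{C_2}{1+C_2}\Gamma_k$, giving (\ref{eq:lc}) with $q^2 = C_2/(1+C_2) < 1$; iterating gives $\Gamma_k \le q^{2k}\Gamma_0$, and since $\|u^k - u^\dag\|_G^2 \le \Gamma_k$ and (from Lemma~\ref{prop9.20}) $\|\Delta u^k\|_G^2 \le \Gamma_{k-1}$, the first two lines of (\ref{eq:lc.1}) follow; the constraint-error and objective-error bounds then come from (\ref{eq:sbl.2})/(\ref{eq:sbl}) which express both in terms of $\|\Delta u^k\|_G$. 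If $\a \in (0,1)$: set $\theta = 1/\a > 1$ and $\Delta_k := \Gamma_k$ to get $\Delta_{k+1}^\theta \le C(\Delta_k - \Delta_{k+1})$; shifting the index (which only changes constants) puts this in the exact form (\ref{5.8.6}), and Lemma~\ref{lem5} yields $\Gamma_k \le \tilde C(1+k)^{-1/(\theta - 1)} = \tilde C(1+k)^{-\a/(1-\a)}$, which is (\ref{eq:HCR}); the consequences (\ref{eq:HCR.1}) follow exactly as before, taking square roots and using (\ref{eq:sbl.2}), (\ref{eq:sbl}). The main obstacle I anticipate is the first bridging step — producing a clean bound $d(0, F(u^{k+1})) \le c_1\|\Delta u^{k+1}\|_G$ while correctly handling that $G$ only defines a seminorm on the $x$- and $y$-components (so that $\|\Delta x^{k+1}\|_P$ rather than $\|\Delta x^{k+1}\|$ appears), and then making sure that $\|u^{k+1} - \hat u^{k+1}\|_G$ — not the full norm — is what enters $\Gamma_{k+1}$, so that the seminorm degeneracy does not break the recursion; this requires that the error-bound (\ref{HMS}) be used only to control the $G$-seminorm of the gap, which is exactly what Lemma~\ref{prop9.20} needs as input.
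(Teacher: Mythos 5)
Your overall architecture coincides with the paper's: the bridge $d(0,F(u^{k+1}))\le c_1\|\Delta u^{k+1}\|_G$ from (\ref{11.11.1}), the descent inequality of Lemma \ref{prop9.20} anchored at $u^\dag$, the recursion $\Gamma_{k+1}^{1/\a}\lesssim \Gamma_k-\Gamma_{k+1}$, and Lemma \ref{lem5}. However, there are two genuine gaps. The first is the step
$\Gamma_{k+1}\le \|u^{k+1}-\hat u^{k+1}\|_G^2+\|\Delta y^{k+1}\|_Q^2$: since $\Gamma_{k+1}$ contains $\|u^{k+1}-u^\dag\|_G^2$ and $\hat u^{k+1}$ is a (near-)closest point of $F^{-1}(0)$ while $u^\dag$ is merely some point of $F^{-1}(0)$, the needed inequality $\|u^{k+1}-u^\dag\|_G\le\|u^{k+1}-\hat u^{k+1}\|_G$ goes in the wrong direction and is false in general. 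You flag the seminorm issue but not this one, and it is the crux of the argument. The paper closes it by proving $\|u^{k+1}-u^\dag\|_G\le 2\,d_G(u^{k+1},F^{-1}(0))$: for any $\bar u\in F^{-1}(0)$, Corollary \ref{cor2} makes $\{\|u^k-\bar u\|_G\}$ nonincreasing, and weak lower semicontinuity of $\|\cdot-\bar u\|_G$ along the subsequence $u^{k_j}\rightharpoonup u^\dag$ gives $\|u^\dag-\bar u\|_G\le\liminf_j\|u^{k_j}-\bar u\|_G\le\|u^{k+1}-\bar u\|_G$, whence the triangle inequality yields the factor $2$. With that estimate in hand your recursion (and your simplification that dispenses with the paper's $\eta$-splitting by bounding $\|\Delta y^{k+1}\|_Q^2\le\|\Delta u^{k+1}\|_G^{2\a}$ directly) does go through.

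The second gap is in case (ii). From $\Gamma_k\le C(k+1)^{-\a/(1-\a)}$, ``taking square roots'' and $\|\Delta u^k\|_G^2\le\Gamma_{k-1}-\Gamma_k\le\Gamma_{k-1}$ only yield $\|\Delta u^k\|_G\le C(k+1)^{-\a/(2(1-\a))}$, which is strictly weaker than the claimed $C(k+1)^{-1/(2(1-\a))}$; since the constraint and objective errors are controlled by $\|\Delta u^k\|_G$ via (\ref{eq:sbl.2}) and (\ref{eq:sbl}), those rates are not reached either. The paper upgrades the exponent by telescoping Lemma \ref{prop9.20} between $l$ and $k$ to get $\sum_{j=l}^k\|\Delta u^j\|_G^2\le\Gamma_l$, invoking the monotonicity of $\{\|\Delta u^j\|_G^2\}$ from Lemma \ref{lem:mono} to obtain $(k-l+1)\|\Delta u^k\|_G^2\le C(l+1)^{-\a/(1-\a)}$, and choosing $l=[k/2]$, which gains the extra factor $(k+1)^{-1}$ and gives $\|\Delta u^k\|_G^2\le C(k+1)^{-1/(1-\a)}$. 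You need to add this step; the analogous refinement is also what underlies the stated rate for $\|u^k-u^\dag\|_G$ in (\ref{eq:HCR.1}).
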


\begin{proof}
According to Theorem \ref{thm2:ADMM}, $u^\dag$ is a KKT point of (\ref{prob}). Therefore 
we may use Lemma \ref{prop9.20} with $\bar u = u^\dag$ to obtain 
\begin{align}\label{ADMM-LC.2}
\|u^{k+1} - u^\dag\|_G^2 + \|\Delta y^{k+1}\|_Q^2 
& \le \|u^k - u^\dag\|_G^2 + \|\Delta y^k\|_Q^2 - \|\Delta u^{k+1}\|_G^2 \nonumber \\
& = \|u^k- u^\dag\|_G^2 + \|\Delta y^k\|_Q^2 - \eta \|\Delta u^{k+1}\|_G^2 \nonumber \\
& \quad \, - (1-\eta) \|\Delta u^{k+1}\|_G^2,
\end{align}
where $\eta\in (0, 1)$ is any number. According to (\ref{11.11.1}),
$$
\begin{pmatrix}
\rho A^* B \Delta y^{k+1} - P \Delta x^{k+1}\\
- Q \Delta y^{k+1} \\
A x^{k+1} + B y^{k+1} - c
\end{pmatrix} \in F(u^{k+1}).
$$
Thus, by using $\Delta \la^{k+1} = \rho (A x^{k+1} + B y^{k+1}-c)$ we can obtain
\begin{align}\label{ADMM-LC.3}
d^2(0, F(u^{k+1})) 
& \le \|\rho A^* B \Delta y^{k+1} - P \Delta x^{k+1}\|^2 + \| -Q \Delta y^{k+1}\|^2 \nonumber \\
& \quad \, + \|A x^{k+1} + B y^{k+1}-c\|^2 \nonumber \\
& \le 2 \|P \Delta x^{k+1}\|^2 + 2 \rho^2 \|A\|^2 \|B \Delta y^{k+1}\|^2 \nonumber \\
& \quad \, + \|Q \Delta y^{k+1}\|^2 + \frac{1}{\rho^2} \|\Delta \la^{k+1}\|^2 \nonumber \\
& \le \gamma \|\Delta u^{k+1}\|_G^2,
\end{align}
where
$$
\gamma:= \max\left\{2 \|P\|, 2 \rho \|A\|^2, \|Q\|, \frac{1}{\rho}\right\}.
$$
Combining this with (\ref{ADMM-LC.2}) gives 
\begin{align*}
\|u^{k+1} - u^\dag\|_G^2 + \|\Delta y^{k+1} \|_Q^2 
& \le \|u^k - u^\dag\|_G^2 + \|\Delta y^k\|_Q^2 - \eta \|\Delta u^{k+1}\|_G^2 \\
& \quad \, - \frac{1-\eta}{\gamma} d^2(0, F(u^{k+1})). 
\end{align*}
Since $\|u^k - u^\dag\|\le R$ for all $k$ and $F$ satisfies (\ref{HMS}), one can see that  
$$
d(u^{k+1}, F^{-1}(0)) \le \kappa [d(0, F(u^{k+1}))]^\a, \quad \forall k\ge 0.
$$
Consequently
\begin{align*}
\|u^{k+1} - u^\dag\|_G^2 + \|\Delta y^{k+1}\|_Q^2 
& \le \|u^k - u^\dag\|_G^2 + \|\Delta y^k\|_Q^2 - \eta \|\Delta u^{k+1}\|_G^2 \\
& \quad \, - \frac{1-\eta}{\gamma \kappa^{2/\a}} [d(u^{k+1}, F^{-1}(0))]^{2/\a}. 
\end{align*}
For any $u =(x, y, \la) \in \X \times \Y \times \Z$ let
$$
d_G(u, F^{-1}(0)):= \inf_{\bar u\in F^{-1}(0)} \|u-\bar u\|_G
$$
which measures the ``distance" from $u$ to $F^{-1}(0)$ under the semi-norm $\|\cdot\|_G$. 
It is easy to see that
$$
d_G^2(u, F^{-1}(0)) \le \|G\| d^2(u, F^{-1}(0)),
$$
where $\|G\|$ denotes the norm of the operator $G$. Then we have 
\begin{align*}
\|u^{k+1} - u^\dag\|_G^2 + \|\Delta y^{k+1}\|_Q^2 
& \le \|u^k - u^\dag\|_G^2 + \|\Delta y^k\|_Q^2 - \eta \|\Delta u^{k+1}\|_G^2 \\
& \quad \, - \frac{1-\eta}{\gamma (\kappa^2 \|G\|)^{1/\a}} [d_G(u^{k+1}, F^{-1}(0))]^{2/\a}. 
\end{align*}
Now let $\bar u\in F^{-1}(0)$ be any point. Then 
\begin{align*}
\|u^{k+1} - u^\dag\|_G \le \|u^{k+1} - \bar u\|_G + \|u^\dag - \bar u\|_G.
\end{align*}
Since $u^\dag$ is a weak cluster point of $\{u^k\}$, there is a subsequence $\{u^{k_j}\}$ 
of $\{u^k\}$ such that $u^{k_j} \rightharpoonup u^\dag$. Thus   
\begin{align*}
\|u^\dag - \bar u\|_G^2 = \lim_{j \to \infty} \l u^{k_j} - \bar u, G(u^\dag - \bar u)\r 
\le \liminf_{j\to \infty} \|u^{k_j} - \bar u\|_G \|u^\dag - \bar u\|_G
\end{align*}
which implies $\|u^\dag - \bar u\|_G \le \liminf_{j\to \infty} \|u^{k_j} - \bar u\|_G$. 
From Corollary \ref{cor2} we know that $\{\|u^k - \bar u\|_G^2\}$ is monotonically decreasing. Thus
\begin{align*}
\|u^{k+1} - u^\dag\|_G
\le  \|u^{k+1} - \bar u\|_G + \liminf_{j\to \infty} \|u^{k_j} - \bar u\|_G \le 2 \|u^{k+1} - \bar u\|_G.
\end{align*}
Since $\bar u\in F^{-1}(0)$ is arbitrary, we thus have 
$$
\|u^{k+1}- u^\dag\|_G \le 2 d_G(u^{k+1}, F^{-1}(0)).
$$
Therefore 
\begin{align*}
\|u^{k+1} - u^\dag\|_G^2 + \|\Delta y^{k+1}\|_Q^2 
& \le \|u^k - u^\dag\|_G^2 + \|\Delta y^k\|_Q^2 - \eta \|\Delta u^{k+1}\|_G^2 \\
& \quad \, - \frac{1-\eta}{\gamma (4\kappa^2 \|G\|)^{1/\a}} \|u^{k+1} - u^\dag\|_G^{2/\a}. 
\end{align*}
By using the fact $\|\Delta u^k\|_G\to 0$ established in Proposition \ref{lem3}, we can 
find a constant $C>0$ such that 
$$
\|\Delta u^{k+1}\|_G^2 \ge C \|\Delta u^{k+1}\|_G^{2/\a}.
$$
Note that $\|\Delta u^{k+1}\|_G^2 \ge \|\Delta y^{k+1}\|_Q^2$. Thus 
\begin{align*}
\|u^{k+1} - u^\dag\|_G^2 + \|\Delta y^{k+1}\|_Q^2 
& \le \|u^k - u^\dag\|_G^2 + \|\Delta y^k\|_Q^2 - C \eta \|\Delta y^{k+1}\|_Q^{2/\a} \\
& \quad \, - \frac{1-\eta}{\gamma (4\kappa^2 \|G\|)^{1/\a}} \|u^{k+1} - u^\dag\|_G^{2/\a}. 
\end{align*}
Choose $\eta$ such that 
$$
\eta = \frac{1}{1 + C \gamma (4 \kappa^2 \|G\|)^{1/\a}}.
$$
Then 
\begin{align*}
& \|u^{k+1} - u^\dag\|_G^2 + \|\Delta y^{k+1}\|_Q^2 \\
& \le \|u^k - u^\dag\|_G^2 + \|\Delta y^k\|_Q^2 
- C \eta \left(\|\Delta y^{k+1}\|_Q^{2/\a} + \|u^{k+1} - u^\dag\|_G^{2/\a}\right). 
\end{align*}
Using the inequality $(a+b)^p \le 2^{p-1}(a^p + b^p)$ for $a,b\ge 0$ and $p\ge 1$, we then obtain 
\begin{align}\label{eq:lc.3}
& \|u^{k+1} - u^\dag\|_G^2 + \|\Delta y^{k+1}\|_Q^2 \nonumber\\
& \le \|u^k - u^\dag\|_G^2 + \|\Delta y^k\|_Q^2 
- 2^{1-1/\a} C \eta \left(\|u^{k+1} - u^\dag\|_G^2 + \|\Delta y^{k+1}\|_Q^2\right)^{1/\a}. 
\end{align}

(i) If $\a=1$, then we obtain the linear convergence   
$$
(1 + C\eta) \left(\|u^{k+1} - u^\dag\|_G^2 + \|\Delta y^{k+1}\|_Q^2\right)
\le \|u^k - u^\dag\|_G^2 + \|\Delta y^k\|_Q^2
$$
which is (\ref{eq:lc}) with $q = 1/(1 + C \eta)$. By using Lemma \ref{prop9.20} 
and (\ref{eq:lc}) we immediately obtain the first estimate in (\ref{eq:lc.1}). By using 
(\ref{eq:sbl.2}) and (\ref{eq:sbl}) we then obtain the last two estimates in (\ref{eq:lc.1}).  

(ii) If $\a\in (0, 1)$, we may use (\ref{eq:lc.3}) and Lemma \ref{lem5} to obtain (\ref{eq:HCR}).  
To derive the first estimate in (\ref{eq:HCR.1}), we may use Lemma \ref{prop9.20} to obtain 
$$
\sum_{j=l}^k \|\Delta u^j\|_G^2 \le \|u^l - u^\dag\|_G^2 + \|\Delta y^l\|_Q^2
$$
for all integers $1 \le l < k$. By using the monotonicity of $\{\|\Delta u^j\|_G^2\}$ shown in 
Lemma \ref{lem:mono} and the estimate (\ref{eq:HCR}) we have 
$$
(k-l+1) \|\Delta u^k\|_G^2 \le C (l+1)^{-\frac{\a}{1-\a}}. 
$$
Taking $l = [k/2]$, the largest integers $\le k/2$, gives 
$$
\|\Delta u^k\|_G^2 \le C (k + 1)^{-\frac{\a}{1-\a}-1} = C (k+1)^{-\frac{1}{1-\a}} 
$$
with a possibly different generic constant $C$. This shows the first estimate in (\ref{eq:HCR.1}).
Based on this, we can use (\ref{eq:sbl.2}) and (\ref{eq:sbl}) to obtain the last two estimates 
in (\ref{eq:HCR.1}). 
The proof is therefore complete. 
\end{proof}

\begin{remark}
Let us give some comments on the condition (\ref{HMS}). In finite dimensional 
Euclidean spaces, it has been proved in \cite{R1981} that for every polyhedral 
multifunction $\Psi: {\mathbb R}^m \rightrightarrows {\mathbb R}^n$ there is 
a constant $\kappa>0$ such that for any $y \in {\mathbb R}^n$ there is a number 
$\ep>0$ such that 
$$
d(x, \Psi^{-1}(y)) \le \kappa d(y, \Psi(x)), \quad \forall x \mbox{ satisfying } d(y, \Psi(x))<\ep.
$$
This result in particular implies the bounded metric subregularity of $\Psi$, 
i.e. for any $r>0$ and any $y \in {\mathbb R}^n$ there is a number $C>0$ such that 
$$
d(x, \Psi^{-1}(y)) \le C d(y, \Psi(x)), \quad \forall x \in B_r(0).
$$
Therefore, if $\p f$ and $\p g$ are polyhedral multifunctions, then the multifunction $F$ 
defined by (\ref{eq:F}) is also polyhedral and thus (\ref{HMS}) with $\a =1$ holds. 
The bounded metric subregularity of polyhedral multifunctions in arbitrary Banach spaces 
has been established in \cite{ZN2014}. 

On the other hand, if $\X$, $\Y$ and $\Z$ are finite dimensional Euclidean spaces, 
and if $f$ and $g$ are semi-algebraic convex functions, then the multifunction $F$ 
satisfies (\ref{HMS}) for some $\a \in (0, 1]$. Indeed, the semi-algebraicity of 
$f$ and $g$ implies that their subdifferentials $\p f$ and $\p g$ are semi-algebraic 
multifunctions with closed graph; consequently $F$ is semi-algebraic with closed graph.
According to \cite[Proposition 3.1]{LP2022}, $F$ is bounded H\"{o}lder metrically 
subregular at any point $(\bar u, \bar\xi)$ on its graph, i.e. for any $r>0$ there 
exist $\kappa>0$ and $\a\in (0,1]$ such that 
$$
d(u, F^{-1}(\bar \xi)) \le \kappa [d(\bar \xi, F(u))]^\a, \quad \forall u \in B_r(\bar u)
$$
which in particular implies (\ref{HMS}). 
\end{remark}

By inspecting the proof of Theorem \ref{thm4:ADMM}, it is easy to see that the same convergence rate 
results can be derived with the condition (\ref{HMS}) replaced by the weaker condition: there exist
$\kappa>0$ and $\a \in (0,1]$ such that 
\begin{align}\label{IBEBC}
d_G(u^k, F^{-1}(0)) \le \kappa \left\|\Delta u^k\right\|_G^\a, \quad \forall k \ge 1. 
\end{align}
Therefore we have the following result.

\begin{theorem}\label{thm5:ADMM}
Let Assumption \ref{Ass1} and Assumption \ref{Ass2} hold. Consider the sequence 
$\{u^k:=(x^k, y^k, \la^k)\}$ generated by the proximal ADMM (\ref{alg1}). Assume $\{u^k\}$ 
is bounded. If there exist $\kappa>0$ and $\a \in (0, 1]$ such that (\ref{IBEBC}) 
holds, then, for any weak cluster point $u^\dag$ of $\{u^k\}$, the same convergence 
rate results in Theorem \ref{thm4:ADMM} hold.  
\end{theorem}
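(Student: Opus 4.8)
The plan is to rerun the proof of Theorem \ref{thm4:ADMM} almost verbatim, isolating the single place where the bounded H\"older metric subregularity condition (\ref{HMS}) is actually used and checking that (\ref{IBEBC}) supplies exactly the estimate needed there. First, since $\{u^k\}$ is bounded, Theorem \ref{thm2:ADMM} guarantees that the weak cluster point $u^\dag$ is a KKT point of (\ref{prob}), so Lemma \ref{prop9.20} applies with $\bar u = u^\dag$; splitting $\|\Delta u^{k+1}\|_G^2 = \eta \|\Delta u^{k+1}\|_G^2 + (1-\eta)\|\Delta u^{k+1}\|_G^2$ for an arbitrary $\eta \in (0,1)$ gives the same starting inequality (\ref{ADMM-LC.2}).

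In the proof of Theorem \ref{thm4:ADMM} the condition (\ref{HMS}) is invoked only to convert the elementary bound $d^2(0, F(u^{k+1})) \le \gamma \|\Delta u^{k+1}\|_G^2$ (obtained by exhibiting a particular element of $F(u^{k+1})$) into an upper bound for $[d_G(u^{k+1}, F^{-1}(0))]^{2/\a}$ by a constant multiple of $\|\Delta u^{k+1}\|_G^2$. Under (\ref{IBEBC}) this bound is available at once: raising (\ref{IBEBC}), with $k$ replaced by $k+1$, to the power $2/\a$ yields $[d_G(u^{k+1}, F^{-1}(0))]^{2/\a} \le \kappa^{2/\a} \|\Delta u^{k+1}\|_G^2$, so the detour through $F(u^{k+1})$, the constant $\gamma$, and the comparison $d_G^2 \le \|G\| d^2$ is simply skipped, and the coefficient $1/(\gamma (4\kappa^2 \|G\|)^{1/\a})$ that appears later becomes $1/(4\kappa^2)^{1/\a}$ (the factor $4$ coming from the next step). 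Everything else carries over unchanged: the inequality $\|u^{k+1} - u^\dag\|_G \le 2\, d_G(u^{k+1}, F^{-1}(0))$ is proved as before, using only that $u^\dag$ is a weak cluster point, the monotonicity of $\{\|u^k - \bar u\|_G^2\}$ from Corollary \ref{cor2}, and the weak lower semicontinuity of the seminorm $\|\cdot\|_G$; the bound $\|\Delta u^{k+1}\|_G^2 \ge C \|\Delta u^{k+1}\|_G^{2/\a}$ uses only $\|\Delta u^k\|_G \to 0$ from Proposition \ref{lem3}; and $\|\Delta u^{k+1}\|_G^2 \ge \|\Delta y^{k+1}\|_Q^2$ follows from the definition of $G$. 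Choosing $\eta$ as in the original proof then produces the recursion (\ref{eq:lc.3}) with a possibly different generic constant.

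From (\ref{eq:lc.3}) the conclusions follow exactly as in Theorem \ref{thm4:ADMM}. When $\a = 1$ one reads off the linear decay (\ref{eq:lc}), and then (\ref{eq:lc.1}) follows via Lemma \ref{prop9.20}, (\ref{eq:sbl.2}) and (\ref{eq:sbl}). When $\a \in (0,1)$ one applies Lemma \ref{lem5} to get (\ref{eq:HCR}); then the summability $\sum_{j=l}^k \|\Delta u^j\|_G^2 \le \|u^l - u^\dag\|_G^2 + \|\Delta y^l\|_Q^2$ together with the monotonicity of $\{\|\Delta u^j\|_G^2\}$ from Lemma \ref{lem:mono} and the choice $l = [k/2]$ yields the rate for $\|\Delta u^k\|_G$, and finally (\ref{eq:sbl.2}) and (\ref{eq:sbl}) give the objective and constraint error estimates, i.e. (\ref{eq:HCR.1}). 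I do not expect any genuinely new difficulty here; the one point to be careful about is the bookkeeping --- one must confirm that (\ref{HMS}) is used nowhere else in the proof of Theorem \ref{thm4:ADMM}, and in particular that the passage $\|u^{k+1} - u^\dag\|_G \le 2\, d_G(u^{k+1}, F^{-1}(0))$ and the lower bound on $\|\Delta u^{k+1}\|_G^2$ rest solely on Corollary \ref{cor2} and Proposition \ref{lem3}, which they do.
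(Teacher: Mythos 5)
Your proposal is correct and is essentially the paper's own argument: the paper proves Theorem \ref{thm5:ADMM} precisely by remarking that (\ref{HMS}) enters the proof of Theorem \ref{thm4:ADMM} only through the chain $d_G^2(u^{k+1},F^{-1}(0))\le \|G\|\,d^2(u^{k+1},F^{-1}(0))\le \|G\|\kappa^2[d(0,F(u^{k+1}))]^{2\a}\le \|G\|\kappa^2\gamma^\a\|\Delta u^{k+1}\|_G^{2\a}$, which (\ref{IBEBC}) delivers directly. Your bookkeeping check --- that the inequality $\|u^{k+1}-u^\dag\|_G\le 2\,d_G(u^{k+1},F^{-1}(0))$ and the bound $\|\Delta u^{k+1}\|_G^2\ge C\|\Delta u^{k+1}\|_G^{2/\a}$ rest only on Corollary \ref{cor2} and Proposition \ref{lem3} --- is exactly the verification the paper leaves implicit.
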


\begin{remark}
Note that the condition (\ref{IBEBC}) is based on the iterative sequence itself. 
Therefore, it makes possible to check the condition by exploring not only the 
property of the multifunction $F$ but also the structure of the algorithm. The 
condition (\ref{IBEBC}) with $\a = 1$ has been introduced in \cite{LYZZ2018} as 
an iteration based error bound condition to study the linear convergence of 
the proximal ADMM (\ref{alg1}) with $Q =0$ in finite dimensions.  
\end{remark}

\begin{remark}
The condition (\ref{IBEBC}) is strongly motivated by the proof of Theorem \ref{thm4:ADMM}.
We would like to provide here an alternative motivation. Consider the proximal 
ADMM (\ref{alg1}). We can show that if $\|\Delta u^k\|_G =0$ then $u^k$ must be a KKT 
point of (\ref{prob}). Indeed, $\|\Delta u^k\|_G^2=0$ implies $P\Delta x^k =0$, 
${\widehat Q} \Delta y^k=0$ and $\Delta \la^k=0$. Since ${\widehat Q} = Q +\rho B^T B$ 
with $Q$ positive semi-definite and $\Delta \la^k = \rho (A x^k + B y^k-c)$, we also 
have $B \Delta y^k=0$, $Q\Delta y^k=0$ and $A x^k+B y^k-c=0$. Thus, it follows from 
(\ref{11.11.1}) that
$$
-A^* \la^k \in \p f(x^k), \quad -B^* \la^k \in \p g(y^k), \quad A x^k + B y^k =c
$$
which shows that $u^k=(x^k, y^k, \la^k)$ is a KKT point, i.e., $u^k \in F^{-1}(0)$. 
Therefore, it is natural to ask, if $\|\Delta u^k\|_G$ is small, can we guarantee 
$d_G(u^k, F^{-1}(0))$ to be small as well? This motivates us to propose 
a condition like 
$$
d_G(u^k, F^{-1}(0)) \le \varphi(\|\Delta u^k\|_G), \quad \forall k \ge 1
$$
for some function $\varphi: [0, \infty) \to [0, \infty)$ with $\varphi(0) =0$. The condition
(\ref{IBEBC}) corresponds to $\varphi(s) = \kappa s^\a$ for some $\kappa>0$ and $\a \in (0, 1]$. 
\end{remark}

In finite dimensional Euclidean spaces some linear convergence results on the 
proximal ADMM (\ref{alg1}) have been established in \cite{DY2016b} under various 
scenarios involving strong convexity of $f$ and/or $g$, Lipschitz continuity of 
$\nabla f$ and/or $\nabla g$, together with further conditions on $A$ and/or $B$, 
see \cite[Theorem 3.1 and Table 1]{DY2016b}. In the following theorem we will 
show that (\ref{IBEBC}) with $\a = 1$ holds under any one of these scenarios and 
thus the linear convergence in \cite[Theorem 3.1 and Theorem 3.4]{DY2016b} can be 
established by using Theorem \ref{thm5:ADMM}. Therefore, the linear convergence 
results based on the bounded metric subregularity of $F$ or the scenarios in \cite{DY2016b} 
can be treated in a unified manner. 

Actually our next theorem improves the results in \cite{DY2016b} by establishing the 
linear convergence of $\{u^k\}$ and $\{H(x^k, y^k)\}$ and relaxing the Lipschitz 
continuity of gradient(s) to the local Lipschitz continuity.
Furthermore, Our result is established in general Hilbert spaces. To formulate the scenarios 
from \cite{DY2016b} in this general setting, we need to replace the full row/column rank 
of matrices by the coercivity of linear operators. 
We also need the linear operator $M: \X\times \Y\to \Z$ defined by 
$$
M(x,y):=Ax+By, \quad \forall (x,y)\in \X\times \Y
$$
which is constructed from $A$ and $B$. It is easy to see that the adjoint of $M$ 
is $M^* z = (A^* z, B^* z)$ for any $z \in \Z$.

\begin{theorem}\label{thm2.11}
Let Assumption \ref{Ass1} and Assumption \ref{Ass2} hold. Let $\{u^k\}$ be the sequence 
generated by the proximal ADMM (\ref{alg1}). Then $\{u^k\}$ is bounded and there 
exists a constant $C>0$ such that
\begin{align}\label{ADMM.30}
d_G(u^k, F^{-1}(0)) \le C \|\Delta u^k\|_G
\end{align}
for all $k\ge 1$, provided any one of the following conditions holds:

\begin{enumerate}[leftmargin = 0.8cm]

\item[\emph{(i)}] $\sigma_g>0$, $A$ and $B^*$ are coercive, 
$g$ is differentiable and its gradient is Lipschitz continuous over bounded sets;

\item[\emph{(ii)}] $\sigma_f>0$, $\sigma_g>0$, $B^*$ is coercive, $g$ is differentiable 
and its gradient is Lipschitz continuous over bounded sets;

\item[\emph{(iii)}] $\la^0=0$, $\sigma_f>0$, $\sigma_g>0$, $M^*$ restricted on $\N(M^*)^\perp$ 
is coercive, both $f$ and $g$ are differentiable and their gradients are Lipschitz continuous 
over bounded sets;

\item[\emph{(iv)}] $\la^0=0$, $\sigma_g>0$, $A$ is coercive, $M^*$ restricted on $\N(M^*)^\perp$ 
is coercive, both $f$ and $g$ are differentiable and their gradients are Lipschitz continuous 
over bounded sets;
\end{enumerate}
where $\N(M^*)$ denotes the null space of $M^*$. 
Consequently, there exist $C>0$ and $0< q<1$ such that 
$$
\|u^k - u^\dag\| \le C q^k \quad \mbox{ and } \quad 
|H(x^k, y^k) - H_*| \le C q^{k}
$$
for all $k \ge 0$, where $u^\dag:=(x^\dag, y^\dag, \la^\dag)$ is a KKT point of (\ref{prob}).
\end{theorem}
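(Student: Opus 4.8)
## Proof Proposal

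The plan is to reduce Theorem \ref{thm2.11} to Theorem \ref{thm5:ADMM} by verifying, under each of the four scenarios, the iteration-based error bound condition \eqref{IBEBC} with $\a=1$, that is, the estimate \eqref{ADMM.30}. Since Theorem \ref{thm5:ADMM} already delivers the linear convergence of $\|u^k-u^\dag\|_G$, $\|\Delta u^k\|_G$, $\|Ax^k+By^k-c\|$ and $|H(x^k,y^k)-H_*|$ once \eqref{IBEBC} holds, the only new work is (a) establishing boundedness of $\{u^k\}$ in each scenario, and (b) proving \eqref{ADMM.30}; the final upgrade from $\|u^k-u^\dag\|_G$-convergence to $\|u^k-u^\dag\|$-convergence (in the full norm) will come from the coercivity hypotheses together with the estimate \eqref{eq:cor.1} and the structural identities for weak cluster points.

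First I would handle boundedness. From Corollary \ref{cor2} the sequence $\{\|u^k-\bar u\|_G^2\}$ is decreasing, so $\{\la^k\}$ and $\{By^k\}$ are bounded and, via $\Delta\la^k=\rho(Ax^k+By^k-c)$, also $\{Ax^k\}$ is bounded; this is exactly the computation in Remark \ref{Rk.2}(i), which shows that $\{(\sigma_fI+P+A^*A)x^k\}$ and $\{(\sigma_gI+Q+B^*B)y^k\}$ are bounded. When $\sigma_g>0$ (all four scenarios) the operator $\sigma_gI+Q+B^*B$ is coercive, giving boundedness of $\{y^k\}$. For $\{x^k\}$: in scenario (i) $A$ is coercive and $\{Ax^k\}$ bounded gives $\{x^k\}$ bounded; in (ii) $\sigma_f>0$ makes $\sigma_fI+P+A^*A$ coercive; in (iii)--(iv) one uses $\la^0=0$ — so that $\la^k\in\mathrm{Ran}(M)=\N(M^*)^\perp$ for all $k$ by induction on the dual update — together with coercivity of $M^*$ on $\N(M^*)^\perp$ applied to $\|M^*\la^k\|=\|(A^*\la^k,B^*\la^k)\|$, combined with \eqref{eq:cor.1}, to control the $x$-component; in (iv) the additional coercivity of $A$ provides the missing control when $\sigma_f=0$.

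The heart of the proof is \eqref{ADMM.30}. The strategy is: given $u^k$, produce an explicit point $\bar u^k\in F^{-1}(0)$ with $\|u^k-\bar u^k\|_G\le C\|\Delta u^k\|_G$. The natural candidate is to keep the dual-type direction and correct the primal variables. Using \eqref{11.11.1}, $u^k$ satisfies the subdifferential inclusions with the perturbations $P\Delta x^k$, $Q\Delta y^k$, $\rho A^*B\Delta y^k$ and the residual $Ax^k+By^k-c=\Delta\la^k/\rho$, all of which are $O(\|\Delta u^k\|_G)$ (as in the bound \eqref{ADMM-LC.3}). The strong convexity of $g$ (all scenarios) lets one compare $y^k$ to the true $y$-component: feeding $-B^*\la^k-Q\Delta y^k\in\p g(y^k)$ and $-B^*\la^\dag\in\p g(y^\dag)$ into the strong monotonicity inequality $\l\xi-\eta,y^k-y^\dag\r\ge 2\sigma_g\|y^k-y^\dag\|^2$ yields $\|y^k-y^\dag\|\le C(\|\Delta y^k\|_Q+\|B^*(\la^k-\la^\dag)\|/\sigma_g)$, and since $B^*$ is coercive in (i)--(ii) this is controlled by $\|\Delta u^k\|_G$ plus $\|\la^k-\la^\dag\|$; the Lipschitz-over-bounded-sets property of $\nabla g$ then converts the $\p g$ information at $y^k$ into information at $y^\dag$, closing the loop on the $x$- and $\la$-components through the coercivity of $A$ (or of $M^*$ on $\N(M^*)^\perp$) and, in (iii)--(iv), the local Lipschitz continuity of $\nabla f$. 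This is essentially the argument of \cite[Section 3]{DY2016b} rerun with the bound \eqref{ADMM-LC.3} supplying the "$O(\|\Delta u^k\|_G)$" residuals and with full-rank matrices replaced by coercive operators.

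The main obstacle I anticipate is the bookkeeping in scenarios (iii) and (iv), where neither $A$ nor $B^*$ alone is assumed coercive and one only has coercivity of $M^*$ on $\N(M^*)^\perp$. There the recovery of $(x^\dag,y^\dag)$ from $(x^k,y^k)$ requires simultaneously exploiting strong convexity of both $f$ and $g$, the local Lipschitz continuity of both gradients (to transfer gradient values between iterate and limit, which needs the boundedness already established), and the decomposition $\la^k=M M^+\la^k$ with $\la^k\perp\N(M^*)$ forced by $\la^0=0$; assembling these into a single inequality of the form $\|u^k-\bar u^k\|_G\le C\|\Delta u^k\|_G$ without circularity is delicate. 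Once \eqref{ADMM.30} is in hand, Theorem \ref{thm5:ADMM} gives linear decay of $\|u^k-u^\dag\|_G$ and hence of $\|\la^k-\la^\dag\|$ and $\|B(y^k-y^\dag)\|$; strong convexity of $g$ then upgrades to linear decay of $\|y^k-y^\dag\|$, coercivity of $A$ (or of $M^*$ on $\N(M^*)^\perp$) plus strong convexity of $f$ upgrades $\|x^k-x^\dag\|$, and \eqref{eq:sbl} finishes $|H(x^k,y^k)-H_*|$.
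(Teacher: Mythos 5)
Your proposal is correct and follows essentially the same route as the paper: reduce to Theorem \ref{thm5:ADMM} by verifying \eqref{IBEBC} with $\a=1$, prove boundedness scenario by scenario from Corollary \ref{cor2} and \eqref{eq:cor.1}, and derive \eqref{ADMM.30} by combining the perturbed inclusions \eqref{11.11.1}/\eqref{ADMM-LC.3} with strong convexity, coercivity, and local Lipschitz continuity of the gradients (the paper, like you, only carries out scenario (i) in detail and obtains the stronger bound $\|u^k-u^\dag\|\le C\|\Delta u^k\|_G$, from which both \eqref{ADMM.30} and the full-norm linear rate follow at once). One small correction: in scenario (iii) the boundedness of $\{x^k\}$ comes directly from $\sigma_f>0$ via \eqref{eq:cor.1}, not from the coercivity of $M^*$ on $\N(M^*)^\perp$, which is needed only to control $\|\la^k-\la^\dag\|$ in the error bound.
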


\begin{proof}
We will only consider the scenario (i) since the proofs for other scenarios are similar. 
In the following we will use $C$ to denote a generic constant which may change from line 
to line but is independent of $k$. 

We first show the boundedness of $\{u^k\}$. According to Corollary \ref{cor2}, $\{\|u^k\|_G^2\}$ is 
bounded which implies the boundedness of $\{\la^k\}$. Since $\sigma_g>0$, it follows from 
(\ref{eq:cor.1}) that $\{y^k\}$ is bounded. Consequently, it follows from 
$\Delta \la^k = \rho(A x^k + B y^k -c)$ that $\{A x^k\}$ is bounded. Since $A$ is coercive,
$\{x^k\}$ must be bounded. 

Next we show (\ref{ADMM.30}). Let $u^\dag :=(x^\dag, y^\dag, \la^\dag)$ be a weak cluster 
point of $\{u^k\}$ whose existence is guaranteed by the boundedness of $\{u^k\}$. According to 
Theorem \ref{thm2:ADMM}, $u^\dag$ is a KKT point of (\ref{prob}). Let $(\xi, \eta, \tau) \in F(u^k)$ 
be any element. Then 
$$
\xi - A^* \la^k \in \p f(x^k), \quad \eta - B^* \la^k \in \p g(y^k), \quad 
\tau = A x^k + B y^k - c. 
$$
By using the monotonicity of $\p f$ and $\p g$ we have 
\begin{align}\label{ADMM.31}
& \sigma_f \|x^k - x^\dag\|^2 + \sigma_g \|y^k - y^\dag\|^2 \nonumber \\
& \le \l \xi - A^* \la^k + A^* \la^\dag, x^k - x^\dag\r + \l \eta - B^* \la^k + B^* \la^\dag, y^k - y^\dag\r \nonumber \\
& = \l \xi, x^k - x^\dag\r + \l \eta, y^k - y^\dag\r + \l \la^\dag - \la^k, A(x^k-x^\dag) + B(y^k - y^\dag)\r \nonumber \\
& = \l \xi, x^k - x^\dag\r + \l \eta, y^k - y^\dag\r + \l \la^\dag - \la^k, \tau\r.
\end{align}
Since $\sigma_g>0$, it follows from (\ref{ADMM.31}) and the Cauchy-Schwarz inequality that 
\begin{align}\label{ADMM.32}
\|y^k - y^\dag\|^2 \le C \left(\|\eta\|^2 + \|\xi\| \|x^k - x^\dag\| + \|\tau\| \|\la^k - \la^\dag\|\right).
\end{align}
Note that $A (x^k - x^\dag) = - B(y^k - y^\dag) + \frac{1}{\rho} \Delta\la^k$. Since $A$ is coercive,
we have 
\begin{align}\label{ADMM.34}
\|x^k - x^\dag\|^2 \le C \|A(x^k - x^\dag)\|^2 
\le C\left(\|y^k - y^\dag\|^2 + \|\Delta \la^k\|^2\right). 
\end{align}
By the differentiability of $g$ we have $-B^*\la^\dag = \nabla g(y^\dag)$ and 
$-B^* \la^k - Q \Delta y^k = \nabla g(y^k)$. Since $B^*$ is coercive and $\nabla g$ 
is Lipschitz continuous over bounded sets, we thus obtain 
\begin{align}\label{ADMM.33}
\|\la^k - \la^\dag\|^2 
& \le C \|B^*(\la^k - \la^\dag)\|^2 = \| Q\Delta y^k + \nabla g(y^k) - \nabla g(y^\dag)\|^2 \nonumber \\
& \le C \left(\|\Delta y^k\|_Q^2 + \|y^k - y^\dag\|^2\right).
\end{align}
Adding (\ref{ADMM.34}) and (\ref{ADMM.33}) and then using (\ref{ADMM.32}), it follows
\begin{align*}
\|x^k - x^\dag\|^2 + \|\la^k - \la^\dag\|^2 
& \le C \left(\|\eta\|^2 + \|\Delta u^k\|_G^2 + \|\xi\| \|x^k - x^\dag\| + \|\tau\| \|\la^k - \la^\dag\|\right) 
\end{align*}
which together with the Cauchy-Schwarz inequality then implies 
\begin{align}\label{ADMM.35}
\|x^k - x^\dag\|^2 + \|\la^k - \la^\dag\|^2 
\le C\left(\|\xi\|^2 + \|\eta\|^2 +\|\tau\|^2 + \|\Delta u^k\|_G^2\right).
\end{align}
Combining (\ref{ADMM.32}) and (\ref{ADMM.35}) we can obtain 
\begin{align*}
\|x^k - x^\dag\|^2 + \|y^k - y^\dag\|^2 + \|\la^k - \la^\dag\|^2 
\le C \left(\|\xi\|^2 + \|\eta\|^2 + \|\tau\|^2 + \|\Delta u^k\|_G^2\right). 
\end{align*}
Since $(\xi, \eta, \tau) \in F(u^k)$ is arbitrary, we therefore have
\begin{align*}
\|u^k - u^\dag\|^2 \le C \left([d(0, F(u^k))]^2 + \|\Delta u^k\|_G^2\right).
\end{align*}
With the help of (\ref{ADMM-LC.3}), we then obtain 
\begin{align}\label{admm.71}
\|u^k - u^\dag\|^2 \le C \|\Delta u^k\|_G^2.
\end{align}
Thus
\begin{align*}
[d_G(u^k, F^{-1}(0))]^2 \le C [d(u^k, F^{-1}(0))]^2 \le C\|u^k - u^\dag\|^2 
\le C \|\Delta u^k\|_G^2
\end{align*}
which shows (\ref{ADMM.30}). 

Because $\{u^k\}$ is bounded and (\ref{ADMM.30}) holds, we may use Theorem \ref{thm5:ADMM} 
to conclude the existence of a constant $q \in (0, 1)$ such that 
$$
\|\Delta u^k\|_G \le C q^k \quad \mbox{and} \quad |H(x^k, y^k) - H_*|\le C q^{k}.
$$ 
Finally we may use (\ref{admm.71}) to obtain $\|u^k-u^\dag\| \le C q^k$. 
\end{proof}

\begin{remark}
If $\Z$ is finite-dimensional, the coercivity of $M^*$ restricted on $\N(M^*)^\perp$ 
required in the scenarios (iii) and (iv) holds automatically. If it is not, then there 
exists a sequence $\{z^k\}\subset \N(M^*)^\perp\setminus\{0\}$ such that
$$
\|z^k\| \ge k \|M^* z^k\|, \quad k = 1, 2, \cdots. 
$$
By rescaling we may assume $\|z^k\|=1$ for all $k$. Since $\Z$ is finite-dimensional, by taking 
a subsequence if necessary, we may assume $z^k \to z$ for some $z \in \Z$. Clearly 
$z\in \N(M^*)^\perp$ and $\|z\|=1$. Note that $\|M^* z^k\|\le 1/k$ for all $k$, we have 
$\|M^* z\| = \lim_{k\to \infty} \|M^* z^k\|=0$ which means $z \in \N(M^*)$. Thus 
$z \in \N(M^*) \cap \N(M^*)^\perp = \{0\}$ which is a contradiction. 
\end{remark}

\section{\bf Proximal ADMM for linear inverse problems}\label{sect3}
\setcounter{equation}{0}

In this section we consider the method (\ref{PADMM2}) as a regularization method for 
solving (\ref{ip1.2}) and establish a convergence rate result under a benchmark source 
condition on the sought solution. Throughout this section we make the following 
assumptions on the operators $Q$, $L$, $A$, the constraint set $\C$ and the function $f$:

\begin{assumption}\label{ass:ADMM1}
\begin{enumerate}[leftmargin= 1cm]
\item[\emph{(i)}] $A: \X \to \H$ is a bounded linear operator, $Q: \X \to \X$ is a 
bounded linear positive semi-definite self-adjoint operator, and $\C \subset \X$ is 
a closed convex subset.

\item[\emph{(ii)}] $L$ is a densely defined, closed, linear operator from $\X$ to $\Y$ 
with domain $\emph{dom}(L)$.

\item[\emph{(iii)}] There is a constant $c_0>0$ such that
\begin{equation*}
\|A x\|^2 + \|L x\|^2 \ge c_0 \|x\|^2, \qquad \forall x\in \emph{dom}(L).
\end{equation*}

\item[\emph{(iv)}] $f: \mathcal{Y}\to (-\infty, \infty]$ is proper, lower semi-continuous, 
and strongly convex.
\end{enumerate}
\end{assumption}

This assumptions is standard in the literature on regularization methods and has been used in 
\cite{JJLW2016,JJLW2017}. Based on (iii), we can define the adjoint $L^*$ of $L$ which is 
also closed and densely defined; moreover, $z\in \mbox{dom}(L^*)$ if and only if
$\l L^* z, x\r =\l z, L x\r$ for all $x\in \mbox{dom}(L)$.  Under Assumption \ref{ass:ADMM1},
it has been shown in \cite{JJLW2016,JJLW2017} that the proximal ADMM (\ref{PADMM2}) is 
well-defined and if the exact data $b$ is consistent in the sense that there exists $\hat x\in \X$ 
such that 
$$
\hat x \in \mbox{dom}(L) \cap \C, \quad L \hat x \in \mbox{dom}(f) 
\quad \mbox{ and } \quad A \hat x = b,
$$
then the problem (\ref{ip1.2}) has a unique solution, denoted by $x^\dag$. Furthermore, 
there holds the following monotonicity result, see \cite[Lemma 2.3]{JJLW2017}; 
alternatively, it can also be derived from Lemma \ref{lem:mono}..

\begin{lemma}\label{lem1.1}
Let $\{z^k, y^k, x^k, \la^k, \mu^k, \nu^k\}$ be defined by the proximal ADMM 
(\ref{PADMM2}) with noisy data and let 
\begin{align}\label{eq:25}
E_k & := \frac{1}{2\rho_1} \|\Delta \la^k\|^2 + \frac{1}{2\rho_2} \|\Delta \mu^k\|^2 
+ \frac{1}{2\rho_3} \|\Delta \nu^k\|^2 \nonumber \\ 
& \quad \ + \frac{\rho_2}{2} \|\Delta y^k\|^2 + \frac{\rho_3}{2} \|\Delta x^k\|^2
+ \frac{1}{2} \|\Delta z^k\|_Q^2.
\end{align}
Then $\{E_k\}$ is monotonically decreasing with respect to $k$.
\end{lemma}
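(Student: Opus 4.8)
The plan is to derive the monotonicity of $\{E_k\}$ by the same route used in the proof of Lemma~\ref{lem:mono}, namely by exploiting the first-order optimality conditions for the $z$-, $y$- and $x$-subproblems of (\ref{PADMM2}) together with the monotonicity of the subdifferential $\p f$ and of the normal cone $\p \iota_{\C}$, and then applying the polarization identity. First I would record the optimality conditions. The $z$-subproblem gives, upon inserting the dual updates,
\begin{align*}
0 &= A^*\la^{k+1} + L^*\mu^{k+1} + \nu^{k+1} - \rho_2 L^*(L z^{k+1} - y^k) - \rho_3(z^{k+1} - x^k)\\
& \quad\ \ + \rho_2 L^*(L z^{k+1} - y^k) + \rho_3(z^{k+1}-x^k) + Q\Delta z^{k+1},
\end{align*}
which after rewriting with $\mu^{k+1}-\mu^k = \rho_2(Lz^{k+1}-y^{k+1})$ and $\nu^{k+1}-\nu^k=\rho_3(z^{k+1}-x^{k+1})$ becomes an identity of the form $A^*\la^{k+1} + L^*(\mu^{k+1}-\rho_2 L\Delta y^{k+1}) + (\nu^{k+1}-\rho_3\Delta x^{k+1}) + Q\Delta z^{k+1} = 0$; the $y$-subproblem gives $-\mu^{k+1}\in\p f(y^{k+1})$ and the $x$-subproblem gives $-\nu^{k+1}\in\p\iota_{\C}(x^{k+1})$ (here using that $y$ and $x$ decouple). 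These are the analogues of (\ref{11.11.1}).

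Next I would subtract the optimality conditions at step $k+1$ from those at step $k$ and pair them against $\Delta z^{k+1}$, $\Delta y^{k+1}$, $\Delta x^{k+1}$ respectively. Monotonicity of $\p f$ and $\p\iota_{\C}$ yields $\l \mu^{k+1}-\mu^k, \Delta y^{k+1}\r \ge 0$ and $\l\nu^{k+1}-\nu^k, \Delta x^{k+1}\r\ge 0$, while the $z$-equation contributes the cross terms involving $A\Delta z^{k+1}$, $L\Delta z^{k+1}$ and $\Delta z^{k+1}$. Using the identities $A\Delta z^{k+1} = \frac{1}{\rho_1}(\Delta\la^{k+1}-\Delta\la^k)$, $L\Delta z^{k+1} - \Delta y^{k+1} = \frac{1}{\rho_2}(\Delta\mu^{k+1}-\Delta\mu^k)$ and $\Delta z^{k+1}-\Delta x^{k+1} = \frac{1}{\rho_3}(\Delta\nu^{k+1}-\Delta\nu^k)$, every inner product can be expressed in terms of $\Delta\la$, $\Delta\mu$, $\Delta\nu$, $\Delta y$, $\Delta x$ and $\Delta z$, and then the polarization identity converts each into a telescoping difference of squares plus a nonnegative remainder. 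Collecting terms and recalling the definition of $E_k$ in (\ref{eq:25}), this should produce an inequality of the shape $0 \le E_k - E_{k+1} - (\text{nonnegative remainder})$, whence $E_{k+1}\le E_k$.

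Alternatively, as the statement itself suggests, one can simply invoke Lemma~\ref{lem:mono}: the method (\ref{PADMM2}) is, as explained in the introduction, a two-block proximal ADMM of the form (\ref{alg1}) with the block variable $z$ in the role of $x$, the pair $(y,x)$ in the role of $y$, the operator $z\mapsto(Az,Lz,z)$ in the role of $A$, the constant $(b^\d,0,0)$ in the role of $c$, the proximal operator $Q$ in the role of $P$, zero in the role of the $y$-side proximal operator, and penalty parameters handled by an appropriate block-diagonal reweighting of $\Z$. Under this identification, $\widehat Q = \rho B^*B + Q$ becomes block-diagonal, $\|\Delta u^k\|_G^2$ reduces exactly to $2E_k$ up to the chosen inner-product scaling, and Lemma~\ref{lem:mono} gives the monotonic decrease directly. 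I expect the main obstacle to be purely bookkeeping: making the correspondence between (\ref{PADMM2}) and (\ref{alg1}) precise enough that the three different penalty parameters $\rho_1,\rho_2,\rho_3$ are absorbed correctly into the metric on $\Z=\H\times\Y\times\X$, and checking that the absence of a proximal term on the $(y,x)$-block (i.e. $Q_{\text{block}}=0$ there) is consistent with the hypotheses of Lemma~\ref{lem:mono}, which allows arbitrary positive semi-definite operators including $0$. Once that is in place the result is immediate; otherwise the direct computation above, though longer, is entirely routine.
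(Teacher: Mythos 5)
Your proposal is correct and matches the paper's approach: the paper gives no independent proof of this lemma, citing \cite[Lemma 2.3]{JJLW2017} and remarking that the result also follows from Lemma \ref{lem:mono} via exactly the two-block reduction you describe (under which, with the penalty parameters absorbed into the metric on $\Z = \H\times\Y\times\X$, one gets $\|\Delta u^k\|_G^2 = 2E_k$ and $Q=0$ on the second block, which Lemma \ref{lem:mono} permits). The only issues are bookkeeping slips in your direct computation --- the rewritten $z$-optimality condition should read $0 = A^*\la^{k+1} + L^*\bigl(\mu^{k+1}+\rho_2\Delta y^{k+1}\bigr) + \nu^{k+1}+\rho_3\Delta x^{k+1} + Q\Delta z^{k+1}$ (plus signs, and no factor of $L$ in front of $\Delta y^{k+1}$), and (\ref{ip.7}) gives $\mu^{k+1}\in\p f(y^{k+1})$ and $\nu^{k+1}\in\p\iota_\C(x^{k+1})$ rather than their negatives --- but these do not affect the validity of either route.
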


In the following we will always assume the exact data $b$ is consistent. 
We will derive a convergence rate of $x^k$ to the unique solution $x^\dag$ of (\ref{ip1.2})
under the source condition 
\begin{align}\label{ip.2}
\exists \mu^\dag \in \p f(L x^\dag) \cap \mbox{dom}(L^*) \mbox{ and }
\nu^\dag \in \p \iota_\C(x^\dag) \mbox{ such that }
L^* \mu^\dag + \nu^\dag \in \mbox{Ran}(A^*).
\end{align}
Note that when $L = I$ and $\C = \X$, (\ref{ip.2}) becomes the benchmark source condition 
$$
\p f(x^\dag) \cap \mbox{Ran}(A^*) \ne \emptyset
$$
which has been widely used to derive convergence rate for regularization methods,
see \cite{BO2004,FS2010,Jin2022,RS2006} for instance. We have the following 
convergence rate result. 

\begin{theorem}\label{thm:ip.admm}
Let Assumption \ref{ass:ADMM1} hold, let the exact data $b$ be consistent, and let 
the sequence $\{z^k, y^k, x^k, \la^k, \mu^k, \nu^k\}$ be defined by the proximal 
ADMM (\ref{PADMM2}) with noisy data $b^\d$ satisfying $\|b^\d - b\| \le \d$. Assume the 
unique solution $x^\dag$ of (\ref{ip1.2}) satisfies the source condition (\ref{ip.2}). 
Then for the integer $k_\d$ chosen by $k_\d \sim \d^{-1}$
there hold
$$
\|x^{k_\d} - x^\dag\| = O(\d^{1/4}), \quad \|y^{k_\d} - L x^\dag\| = O(\d^{1/4}) \quad 
\mbox{and} \quad \|z^{k_\d} - x^\dag\| = O(\d^{1/4})
$$
as $\d \to 0$. 
\end{theorem}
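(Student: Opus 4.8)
The plan is to run a standard iterative-regularization argument: first establish a noise-free convergence statement with an explicit rate of the "energy-type" quantity $E_k$ (or a closely related monotone quantity), then feed in a perturbation estimate that accounts for the use of $b^\d$ in place of $b$, and finally balance the two contributions by choosing $k_\d\sim\d^{-1}$. The starting point is Lemma \ref{lem1.1}, which tells us $\{E_k\}$ is monotonically decreasing; combined with a telescoping inequality (obtained the same way as in Lemma \ref{prop9.20}, applied to the sequence generated by (\ref{PADMM2}) with $b$ replaced by $b^\d$) one gets $\sum_k E_{k}<\infty$ for the noise-free run and the $o(1/k)$-type decay $kE_k\to 0$, exactly as in the proof of Proposition \ref{lem3}. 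The source condition (\ref{ip.2}) is what converts this decay of the "increment" energy into a decay of the distance of $x^k$ to $x^\dag$: writing $L^*\mu^\dag+\nu^\dag=A^*w$ for some $w\in\H$, one expresses the Bregman-distance-type quantity $D_f(Lx^k,Lx^\dag)+D_{\iota_\C}(x^k,x^\dag)$ (which, by strong convexity of $f$ in Assumption \ref{ass:ADMM1}(iv), dominates $\|y^k-Lx^\dag\|^2$ up to a constant) in terms of inner products that can be bounded by $\|w\|\,\|Ax^k-b\|+(\text{terms controlled by }E_k)$. Assumption \ref{ass:ADMM1}(iii) then upgrades control of $\|Ax^k-b\|$ and $\|Lx^k-Lx^\dag\|$ (equivalently $\|y^k-Lx^\dag\|$ plus an increment term) to control of $\|x^k-x^\dag\|$ and $\|z^k-x^\dag\|$.

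Concretely, the key steps, in order, are: (1) Write down the first-order optimality conditions for the $z$-, $y$-, $x$-subproblems in (\ref{PADMM2}) and the dual updates, and rewrite them — as was done for (\ref{fg})$\to$(\ref{11.11.1}) — so that residuals appear as increments $\Delta z^k,\Delta y^k,\Delta x^k,\Delta\la^k,\Delta\mu^k,\Delta\nu^k$, all of which are controlled by $\sqrt{E_k}$. (2) Use $-B^*$-type relations and the subgradient inclusions $\mu^k\approx\p f(y^k)$, $\nu^k\approx\p\iota_\C(x^k)$ together with $\mu^\dag\in\p f(Lx^\dag)$, $\nu^\dag\in\p\iota_\C(x^\dag)$ and monotonicity to produce an inequality of the shape
\[
\sigma_f\|y^k-Lx^\dag\|^2 \le \langle L^*\mu^\dag+\nu^\dag,\, x^\dag - z^k\rangle + c_1\sqrt{E_k}\,\big(\|z^k-x^\dag\|+\|x^k-x^\dag\|+\|y^k-Lx^\dag\|\big) + c_2\d\,\|\la^k\|,
\]
where the last term comes from the $b^\d$-versus-$b$ mismatch in the $\la$-update and $\{\la^k\}$ is shown to be bounded (as in Corollary \ref{cor2}, $\{\|u^k\|_G\}$-type boundedness). (3) Insert the source representation $L^*\mu^\dag+\nu^\dag=A^*w$ and $\langle A^*w, x^\dag-z^k\rangle=\langle w, Ax^\dag-Az^k\rangle$, bounding $\|Az^k-b\|\le\|Az^k-Ax^{k+1}\| + \|Ax^{k+1}-b\|\le C(\sqrt{E_k}+\d+\|Ax^{k+1}-b^\d\|)$ and noting $Ax^{k+1}-b^\d=\rho_1^{-1}\Delta\la^{k+1}$. (4) Apply Young's inequality to absorb the cross terms, then invoke Assumption \ref{ass:ADMM1}(iii) to pass from $\|y^k-Lx^\dag\|$ and $\|Az^k-b\|$ (hence $\|Lz^k - Lx^\dag\|$, $\|Az^k-Ax^\dag\|$) to $\|z^k-x^\dag\|$, and then to $\|x^k-x^\dag\|$ via the residual $\|z^k-x^k\|=\rho_3^{-1}\|\Delta\nu^k\|\le C\sqrt{E_k}$. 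This yields
\[
\|x^k-x^\dag\|^2 + \|y^k-Lx^\dag\|^2 + \|z^k-x^\dag\|^2 \le C\big(\sqrt{E_k} + \d\big).
\]
(5) Finally, bound $E_k$: by monotonicity $E_k\le \frac1k\sum_{j\le k}E_j$, and the telescoped estimate gives $\sum_{j\le k}E_j\le C(1+\text{initial data}) + Ck\d^2$ (the $k\d^2$ coming from the noise entering each step of the dissipation inequality), so $E_k\le C(1/k+\d^2)$; hence $\sqrt{E_k}\le C(1/\sqrt k+\d)$. Choosing $k_\d\sim\d^{-1}$ balances $1/\sqrt{k_\d}\sim\d^{1/2}$ against $\d$, giving $\sqrt{E_{k_\d}}+\d = O(\d^{1/2})$, and therefore $\|x^{k_\d}-x^\dag\|^2 = O(\d^{1/2})$, i.e. $\|x^{k_\d}-x^\dag\|=O(\d^{1/4})$, and likewise for $y^{k_\d}$ and $z^{k_\d}$.

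The main obstacle I expect is Step (2)–(3): carefully tracking how the noise $\d$ and the increment energy $\sqrt{E_k}$ enter the estimate for the strongly-convex gap $\sigma_f\|y^k-Lx^\dag\|^2$, and in particular getting the distance terms ($\|z^k-x^\dag\|$, $\|x^k-x^\dag\|$) that appear with a $\sqrt{E_k}$ coefficient onto the left-hand side without circularity — this is where one needs the linear-growth-in-distance structure so that Young's inequality produces a clean absorption. A secondary technical point is justifying the telescoping-with-noise inequality $E_{k+1}+(\text{dissipation})\le E_k + C\d\cdot(\text{bounded increments})$ for the full six-block method (\ref{PADMM2}), which requires repeating the Lemma \ref{lem:mono}/Lemma \ref{prop9.20} computation while keeping the $\d$-dependent terms from the $\la$-update; boundedness of $\{\la^k\}$, $\{\mu^k\}$, $\{\nu^k\}$ under the consistency hypothesis is needed here and follows from a Corollary \ref{cor2}-type argument adapted to (\ref{PADMM2}).
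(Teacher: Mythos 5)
Your overall architecture is the same as the paper's: a per-iteration inequality obtained from the subproblem optimality conditions, the source representation $L^*\mu^\dag+\nu^\dag=-A^*\la^\dag$, and monotonicity of $\p f$ and $\p\iota_\C$, in which $\sigma_f\|y^{k+1}-Lx^\dag\|^2+E_{k+1}$ is controlled by the decrease of a distance-type Lyapunov function $\Phi_k$ (distance of $(\la^k,\mu^k,\nu^k,z^k,y^k,x^k)$ to the exact-problem multipliers and solution) plus a noise term $\d\|\la^{k+1}-\la^\dag\|$; then telescoping, monotonicity of $E_k$, and Assumption \ref{ass:ADMM1}(iii) at $k_\d\sim\d^{-1}$. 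The cosmetic differences (keeping $\l w, b-Az^k\r$ explicit and absorbing distance terms by Young, versus the paper's observation that the source-condition combination is $\le 0$ and the bound $\Phi_k-\Phi_{k+1}\le\sqrt{2(\Phi_k+\Phi_{k+1})E_{k+1}}$) do not matter.

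There is, however, one genuine gap: your justification of the boundedness of $\{\la^k\},\{\mu^k\},\{\nu^k\}$ by ``a Corollary \ref{cor2}-type argument adapted to (\ref{PADMM2})'' does not work, because the iteration is run with $b^\d$ and the perturbed problem generally has no KKT point (this is exactly the point the paper makes about (\ref{perturb})), so the Fej\'{e}r-monotonicity of the distance to a KKT point is lost. What one actually gets, testing against the KKT-type point of the \emph{exact} problem supplied by (\ref{ip.2}), is the implicit recursion $\Phi_{k+1}\le\Phi_k+\sqrt{2\rho_1\Phi_{k+1}}\,\d$, which yields only the linear growth $\Phi_k^{1/2}\le\Phi_0^{1/2}+\sqrt{2\rho_1}\,k\d$ (the paper's Lemma \ref{lem4} and Lemma \ref{lem11}); uniform boundedness in $k$ is in fact false — that is the semi-convergence phenomenon — and what saves the argument is that $\Phi_k$ stays bounded precisely for $k\lesssim\d^{-1}$. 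Your proof needs this elementary induction to close; without it the coefficient of $\d$ in your step (2) is uncontrolled and the telescoping in step (5) cannot be summed. Relatedly, your bookkeeping $\sum_{j\le k}E_j\le C+Ck\d^2$ is off: each step contributes $\sqrt{2\rho_1\Phi_j}\,\d\lesssim(\Phi_0^{1/2}+j\d)\d$, so the correct bound is $\sum_{j\le k}E_j\le C+Ck\d+Ck^2\d^2$, hence $E_k\le C(k^{-1}+\d+k\d^2)$. This still gives $E_{k_\d}=O(\d)$ at $k_\d\sim\d^{-1}$ and therefore does not change the final rates, but as stated the intermediate claim is not what the estimate produces. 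Once the growth lemma for $\Phi_k$ is inserted, the rest of your argument goes through and reproduces the paper's proof.
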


In order to prove this result, let us start from the formulation of the 
algorithm (\ref{PADMM2}) to derive some useful estimates. For simplicity 
of exposition, we set 
\begin{align*}
&\Delta x^{k+1}:= x^{k+1} - x^k, \quad \Delta y^{k+1}:= y^{k+1} - y^k, \quad 
\Delta z^{k+1}:= z^{k+1} - z^k, \\
&\Delta \la^{k+1}:= \la^{k+1} - \la^k, \quad \Delta \mu^{k+1}:= \mu^{k+1} - \mu^k, 
\quad \Delta \nu^{k+1}:= \nu^{k+1} - \nu^k.
\end{align*}
According to the definition of $z^{k+1}$, $y^{k+1}$ and $x^{k+1}$ in (\ref{PADMM2}), 
we have the optimality conditions 
\begin{align}
& 0 = A^* \la^k + \nu^k + \rho_1 A^* (A z^{k+1} - b^\d) + L^* (\mu^k + \rho_2(L z^{k+1} -y^k)) \nonumber \\
& \quad \quad + \rho_3 (z^{k+1} - x^k) + Q(z^{k+1} - z^k), \label{ip.3}\\
& 0 \in \p f(y^{k+1}) - \mu^k - \rho_2(L z^{k+1} - y^{k+1}), \label{ip.4}\\
& 0 \in \p\iota_C(x^{k+1}) -\nu^k - \rho_3 (z^{k+1} - x^{k+1}). \label{ip.5}
\end{align}
By using the last two equations in (\ref{PADMM2}), we have from (\ref{ip.4}) and 
(\ref{ip.5}) that 
\begin{align}
\mu^{k+1} \in \p f(y^{k+1}) \quad \mbox{and} \quad 
\nu^{k+1} \in \p\iota_{\C}(x^{k+1}). \label{ip.7}
\end{align}
Letting $y^\dag = L x^\dag$. From the strong convexity of $f$, the convexity of $\iota_C$,
and (\ref{ip.7}) it follows that  
\begin{align}\label{ip.8}
\sigma_f \|y^{k+1} - y^\dag\|^2 
& \le f(y^\dag) - f(y^{k+1}) - \l \mu^{k+1}, y^\dag - y^{k+1}\r \nonumber \\
& \quad \, + \l \nu^{k+1}, x^{k+1} - x^\dag\r. 
\end{align}
where $\sigma_f$ denotes the modulus of convexity of $f$; we have $\sigma_f>0$ as $f$ is 
strongly convex. By taking the inner product of (\ref{ip.3}) with $z^{k+1} - x^\dag$ we have 
\begin{align*}
0 
& = \l \la^k + \rho_1 (A z^{k+1} - b^\d), A (z^{k+1} - x^\dag)\r \\
& \quad \, + \l \mu^k + \rho_2(L z^{k+1} - y^k), L(z^{k+1} - x^\dag)\r \nonumber \\
& \quad \, + \l \nu^k + \rho_3 (z^{k+1} - x^k), z^{k+1} - x^\dag\r \\
& \quad \, + \l Q(z^{k+1} - z^k), z^{k+1} - x^\dag\r.
\end{align*}
Therefore we may use the definition of $\la^{k+1}, \mu^{k+1}, \nu^{k+1}$ in (\ref{PADMM2}) and the 
fact $A x^\dag = b$ to further obtain
\begin{align}\label{ip.9}
0 & = \l \la^{k+1}, A z^{k+1} - b\r + \l \mu^{k+1} 
+ \rho_2 \Delta y^{k+1}, L z^{k+1} - y^\dag\r \nonumber \\
& \quad \, + \l \nu^{k+1} + \rho_3 \Delta x^{k+1}, z^{k+1} - x^\dag\r \nonumber \\
& \quad \, + \l Q\Delta z^{k+1}, z^{k+1} - x^\dag\r. 
\end{align}
Subtracting (\ref{ip.8}) by (\ref{ip.9}) gives 
\begin{align*}
\sigma_f \|y^{k+1} - y^\dag\|^2 
& \le f(y^\dag) - f(y^{k+1}) - \l \la^{k+1}, A z^{k+1} - b\r + \l \mu^{k+1}, y^{k+1} - L z^{k+1}\r \\
& \quad \, - \rho_2 \l \Delta y^{k+1}, L z^{k+1} - y^\dag\r + \l \nu^{k+1}, x^{k+1} - z^{k+1}\r \\
& \quad \, -\rho_3 \l \Delta x^{k+1}, z^{k+1} - x^\dag\r - \l Q\Delta z^{k+1}, z^{k+1} - x^\dag\r.
\end{align*}
Note that under the source condition (\ref{ip.2}), there exist $\mu^\dag$, $\nu^\dag$ 
and $\la^\dag$ such that 
\begin{align}\label{ip.9.5}
\mu^\dag \in \p f(y^\dag), \quad \nu^\dag \in \p \iota_\C(x^\dag) \quad \mbox{ and } 
\quad L^* \mu^\dag + \nu^\dag + A^* \la^\dag = 0. 
\end{align}
Thus, it follows from the above equation and the last two equations in (\ref{PADMM2}) that 
\begin{align*}
& \sigma_f \|y^{k+1} - y^\dag\|^2 \\
& \le f(y^\dag) - f(y^{k+1}) - \l \la^\dag, A z^{k+1} - b\r 
- \l \mu^\dag,  L z^{k+1} - y^{k+1}\r - \l \nu^\dag, z^{k+1} - x^{k+1}\r\\
& \quad \ - \l \la^{k+1} - \la^\dag, A z^{k+1} - b^\d + b^\d - b\r \\
& \quad \ -\frac{1}{\rho_2} \l \mu^{k+1} - \mu^\dag, \Delta\mu^{k+1} \r 
 - \rho_2 \l \Delta y^{k+1}, L z^{k+1} - y^\dag\r \\
& \quad \ - \frac{1}{\rho_3} \l \nu^{k+1} - \nu^\dag, \Delta\nu^{k+1}\r 
- \rho_3 \l \Delta x^{k+1}, z^{k+1} - x^\dag\r \\
& \quad \, - \l Q\Delta z^{k+1}, z^{k+1} - x^\dag\r. 
\end{align*}
By using (\ref{ip.9.5}), $b = A x^\dag$ and the convexity of $f$, we can see that 
\begin{align*}
& f(y^\dag) - f(y^{k+1}) - \l \la^\dag, A z^{k+1} - b\r 
- \l \mu^\dag,  L z^{k+1} - y^{k+1}\r - \l \nu^\dag, z^{k+1} - x^{k+1}\r\\
& = f(y^\dag) - f(y^{k+1}) + \l \la^\dag, b\r + \l \mu^\dag, y^{k+1}\r 
+ \l \nu^\dag,  x^{k+1}\r\\
& = f(y^\dag) - f(y^{k+1}) + \l A^* \la^\dag, x^\dag\r + \l \mu^\dag, y^{k+1}\r 
+ \l \nu^\dag,  x^{k+1}\r\\
& = f(y^\dag) - f(y^{k+1}) - \l L^* \mu^\dag, x^\dag\r + \l \mu^\dag, y^{k+1}\r 
+ \l \nu^\dag,  x^{k+1} - x^\dag \r\\
& = f(y^\dag) - f(y^{k+1}) + \l \mu^\dag, y^{k+1} - y^\dag\r 
+ \l \nu^\dag,  x^{k+1} - x^\dag \r \le 0.
\end{align*}
Consequently, by using the fourth equation in (\ref{PADMM2}), we have
\begin{align*}
\sigma_f \|y^{k+1} - y^\dag\|^2 
& \le - \l \la^{k+1} - \la^\dag, b^\d - b\r 
- \frac{1}{\rho_1} \l \la^{k+1} - \la^\dag, \Delta \la^{k+1}\r \\
& \quad \ - \frac{1}{\rho_2} \l \mu^{k+1} - \mu^\dag, \Delta\mu^{k+1}\r  
- \frac{1}{\rho_3} \l \nu^{k+1} - \nu^\dag, \Delta\nu^{k+1}\r \\
& \quad \ - \rho_2 \l \Delta y^{k+1}, y^{k+1}-y^\dag + L z^{k+1} - y^{k+1}\r \\
& \quad \ - \rho_3 \l \Delta x^{k+1}, x^{k+1} - x^\dag + z^{k+1} - x^{k+1}\r \\
& \quad \ - \l Q\Delta z^{k+1}, z^{k+1} - x^\dag\r. 
\end{align*}
By using the polarization identity and the last two equations in (\ref{PADMM2}) we 
further have 
\begin{align*}
\sigma_f \|y^{k+1} - y^\dag\|^2 
& \le - \l \la^{k+1} - \la^\dag, b^\d - b\r \\
& \quad \ + \frac{1}{2\rho_1} \left(\|\la^k -\la^\dag\|^2 - \|\la^{k+1} - \la^\dag\|^2 
- \|\Delta \la^{k+1}\|^2\right) \\
& \quad \ + \frac{1}{2\rho_2} \left(\|\mu^k - \mu^\dag\|^2 - \|\mu^{k+1} - \mu^\dag\|^2 
- \|\Delta\mu^{k+1}\|^2\right) \\
& \quad \ + \frac{1}{2\rho_3} \left(\|\nu^k - \nu^\dag\|^2 - \|\nu^{k+1} - \nu^\dag\|^2 
- \|\Delta \nu^{k+1}\|^2 \right) \\
& \quad \ + \frac{1}{2} \left(\|z^k - z^\dag\|_Q^2 - \|z^{k+1} - z^\dag\|_Q^2 
- \|\Delta z^{k+1}\|_Q^2\right) \\
& \quad \ + \frac{\rho_2}{2} \left(\|y^k - y^\dag\|^2 - \|y^{k+1} - y^\dag\|^2 
- \|\Delta y^{k+1}\|^2\right) \\
& \quad \ + \frac{\rho_3}{2} \left(\|x^k - x^\dag\|^2 - \|x^{k+1} - x^\dag\|^2 
- \|\Delta x^{k+1}\|^2\right) \\
& \quad \ - \l \Delta y^{k+1}, \Delta \mu^{k+1} \r 
- \l \Delta x^{k+1}, \Delta \nu^{k+1} \r.
\end{align*}
Let 
\begin{align*}
\Phi_k & := \frac{1}{2\rho_1} \|\la^k -\la^\dag\|^2 
+ \frac{1}{2\rho_2} \|\mu^k - \mu^\dag\|^2 
+ \frac{1}{2\rho_3} \|\nu^k - \nu^\dag\|^2 \\
& \quad \ + \frac{1}{2} \|z^k - x^\dag\|_Q^2 
+ \frac{\rho_2}{2} \|y^k - y^\dag\|^2
+ \frac{\rho_3}{2} \|x^k - x^\dag\|^2. 
\end{align*}
Then 
\begin{align}\label{ip.10}
\sigma_f \|y^{k+1} - y^\dag\|^2 
& \le \Phi_k - \Phi_{k+1} - \l \la^{k+1} - \la^\dag, b^\d - b\r - E_{k+1} \nonumber \\
& \quad \ - \l \Delta y^{k+1}, \Delta \mu^{k+1}\r 
- \l \Delta x^{k+1}, \Delta \nu^{k+1}\r,
\end{align}
where $E_k$ is defined by (\ref{eq:25}). 

\begin{lemma}\label{lem12}
For all $k = 0, 1, \cdots$ there hold 
\begin{align}\label{ip.11}
\sigma_f \|y^{k+1} - y^\dag\|^2 
\le \Phi_k - \Phi_{k+1} - \l \la^{k+1} - \la^\dag, b^\d - b\r - E_{k+1},
\end{align}
\begin{align}\label{ip.12}
E_{k+1} \le \Phi_k - \Phi_{k+1} + \sqrt{2\rho_1 \Phi_{k+1}} \d
\end{align}
and 
\begin{align}\label{ip.13}
\Phi_{k+1} \le \Phi_0 + \left(\sum_{j=1}^{k+1} \sqrt{2\rho_1 \Phi_j}\right) \d. 
\end{align}
\end{lemma}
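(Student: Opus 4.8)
The plan is to read off all three inequalities from the estimate (\ref{ip.10}), which has already been derived just above the statement, by repeatedly discarding nonnegative terms and applying the Cauchy--Schwarz inequality.

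First, for (\ref{ip.11}), I would dispose of the two cross terms $-\l\Delta y^{k+1},\Delta\mu^{k+1}\r$ and $-\l\Delta x^{k+1},\Delta\nu^{k+1}\r$ on the right-hand side of (\ref{ip.10}). By (\ref{ip.7}), together with the way the initial data in (\ref{PADMM2}) is chosen, we have $\mu^k\in\p f(y^k)$ and $\nu^k\in\p\iota_\C(x^k)$ for all $k\ge 0$; hence the monotonicity of $\p f$ and $\p\iota_\C$ yields
\[
\l \Delta y^{k+1},\, \Delta\mu^{k+1}\r = \l y^{k+1}-y^k,\, \mu^{k+1}-\mu^k\r \ge 0
\qquad\text{and}\qquad
\l \Delta x^{k+1},\, \Delta\nu^{k+1}\r \ge 0 .
\]
Discarding these two nonnegative quantities from the right-hand side of (\ref{ip.10}) produces (\ref{ip.11}).

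Next, for (\ref{ip.12}), I would drop the nonnegative term $\sigma_f\|y^{k+1}-y^\dag\|^2$ in (\ref{ip.11}) to get $E_{k+1}\le\Phi_k-\Phi_{k+1}-\l\la^{k+1}-\la^\dag,\,b^\d-b\r$. Since $\|b^\d-b\|\le\d$, the Cauchy--Schwarz inequality bounds the last inner product by $\|\la^{k+1}-\la^\dag\|\,\d$; and since each summand in the definition of $\Phi_{k+1}$ is nonnegative, $\frac{1}{2\rho_1}\|\la^{k+1}-\la^\dag\|^2\le\Phi_{k+1}$, so $\|\la^{k+1}-\la^\dag\|\le\sqrt{2\rho_1\Phi_{k+1}}$. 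Combining these estimates gives (\ref{ip.12}). Finally, for (\ref{ip.13}), I would drop the nonnegative term $E_{k+1}$ in (\ref{ip.12}) to obtain $\Phi_{k+1}-\Phi_k\le\sqrt{2\rho_1\Phi_{k+1}}\,\d$ for every $k\ge 0$; relabelling $k+1$ as $j$ and summing over $j=1,\dots,k+1$ telescopes the left side, giving $\Phi_{k+1}-\Phi_0\le\bigl(\sum_{j=1}^{k+1}\sqrt{2\rho_1\Phi_j}\bigr)\d$, which is (\ref{ip.13}).

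None of these steps is hard once (\ref{ip.10}) is available; the only place requiring a little care is the very first one at $k=0$, where applying monotonicity to the increments $\Delta\mu^1=\mu^1-\mu^0$ and $\Delta\nu^1=\nu^1-\nu^0$ needs $\mu^0\in\p f(y^0)$ and $\nu^0\in\p\iota_\C(x^0)$; for $k\ge 1$ these memberships follow at once from (\ref{ip.7}), while at $k=0$ they are ensured by the choice of the initial data for the iteration.
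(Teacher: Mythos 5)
Your proposal is correct and follows essentially the same route as the paper: the paper likewise kills the cross terms via (\ref{ip.7}) and the monotonicity of $\p f$ and $\p\iota_\C$ (writing $0\le\sigma_f\|\Delta y^{k+1}\|^2\le\l\Delta\mu^{k+1},\Delta y^{k+1}\r+\l\Delta\nu^{k+1},\Delta x^{k+1}\r$ rather than treating the two terms separately), then obtains (\ref{ip.12}) by dropping $\sigma_f\|y^{k+1}-y^\dag\|^2$, applying Cauchy--Schwarz and the bound $\|\la^{k+1}-\la^\dag\|\le\sqrt{2\rho_1\Phi_{k+1}}$, and gets (\ref{ip.13}) by dropping $E_{k+1}$ and telescoping. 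Your remark about needing $\mu^0\in\p f(y^0)$ and $\nu^0\in\p\iota_\C(x^0)$ at $k=0$ is a fair point that the paper's proof glosses over as well.
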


\begin{proof}
By using (\ref{ip.7}) and the monotonicity of the subdifferentials $\p f$ 
and $\p \iota_{\C}$ we have 
$$
0 \le \sigma_f \|\Delta y^{k+1}\|^2 \le \l \Delta \mu^{k+1}, \Delta y^{k+1}\r 
+ \l \Delta \nu^{k+1}, \Delta x^{k+1}\r.
$$
This together with (\ref{ip.10}) implies (\ref{ip.11}). From (\ref{ip.11}) it follows 
immediately that
\begin{align*}
E_{k+1} &\le \Phi_k - \Phi_{k+1} - \l \la^{k+1} - \la^\dag, b^\d - b\r \nonumber\\
& \le \Phi_k - \Phi_{k+1} + \|\la^{k+1} - \la^\dag\| \d \nonumber \\
& \le \Phi_k - \Phi_{k+1} + \sqrt{2\rho_1 \Phi_{k+1}} \d
\end{align*}
which shows (\ref{ip.12}). By the non-negativity of $E_{k+1}$ we then obtain from (\ref{ip.12}) that
$$
\Phi_{k+1} \le \Phi_k + \sqrt{2 \rho_1 \Phi_{k+1}} \d, \quad \forall k \ge 0  
$$
which clearly implies (\ref{ip.13}). 
\end{proof}

In order to derive the estimate on $\Phi_k$ from (\ref{ip.13}), we need the following elementary result.

\begin{lemma}\label{lem4}
Let $\{a_k\}$ and $\{b_k\}$ be two sequences of nonnegative numbers such that
$$
a_k^2 \le b_k^2 + c \sum_{j=1}^{k} a_j, \quad k=0, 1, \cdots,
$$
where $c \ge 0$ is a constant. If $\{b_k\}$ is non-decreasing, then
$$
a_k \le b_k + c k, \quad k=0, 1, \cdots.
$$
\end{lemma}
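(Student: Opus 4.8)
The plan is to avoid a direct induction, which is awkward here because $a_k$ itself occurs inside the sum on the right-hand side and the square term complicates the bookkeeping. Instead I would work with the running maximum. For each $k\ge 0$ set $m_k := \max_{0\le j\le k} a_j$.

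First I would derive a self-referential bound on $m_k$. For any index $j$ with $1\le j\le k$, the hypothesis gives $a_j^2 \le b_j^2 + c\sum_{i=1}^{j} a_i$; since $\{b_k\}$ is non-decreasing we have $b_j\le b_k$, and since $a_i\le m_k$ for every $i$ with $1\le i\le k$ we have $\sum_{i=1}^{j} a_i \le k\, m_k$, so $a_j^2\le b_k^2 + c k\, m_k$. For $j=0$ the hypothesis reads $a_0^2\le b_0^2\le b_k^2$ (empty sum), which is also $\le b_k^2 + c k\, m_k$. Taking the maximum over $0\le j\le k$ yields
$$
m_k^2 \le b_k^2 + c k\, m_k .
$$

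Next I would read this as the quadratic inequality $m_k^2 - (ck)\,m_k - b_k^2 \le 0$ in the single nonnegative unknown $m_k$, so that $m_k \le \tfrac12\big(ck + \sqrt{c^2k^2 + 4 b_k^2}\big)$. Applying the elementary inequality $\sqrt{s+t}\le \sqrt{s}+\sqrt{t}$ (valid for $s,t\ge 0$) with $s=c^2k^2$ and $t=4b_k^2$ gives $\sqrt{c^2k^2+4b_k^2}\le ck+2b_k$, whence $m_k \le b_k + ck$. Since $a_k\le m_k$ by definition, the asserted estimate $a_k\le b_k + ck$ follows for all $k\ge 0$.

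There is no essential obstacle here: the only real idea is to track $m_k=\max_{0\le j\le k}a_j$, which converts an inequality containing an internal sum into a scalar quadratic inequality. The remaining points are purely bookkeeping — the empty-sum convention at $k=0$ (so that the conclusion correctly reduces to $a_0\le b_0$) and the degenerate case $c=0$, where the argument collapses to $m_k^2\le b_k^2$ and the conclusion is immediate.
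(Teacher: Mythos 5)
Your proof is correct. Every step checks out: the bound $\sum_{i=1}^{j} a_i \le k\,m_k$ for $j\le k$, the passage to $m_k^2 \le b_k^2 + ck\,m_k$, the resolution of the quadratic inequality (the lower root is nonpositive, so only the upper root constrains the nonnegative $m_k$), the subadditivity of the square root, and the handling of $k=0$ via the empty-sum convention. The paper instead argues by induction on $k$ with a case split: if $a_{l+1}$ is dominated by some earlier $a_j$ the induction hypothesis and monotonicity of $\{b_k\}$ finish immediately, while if $a_{l+1}$ is the new maximum the paper bounds $\sum_{j=1}^{l+1} a_j \le (l+1)a_{l+1}$ and completes the square. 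Your argument is a non-inductive repackaging of the same two computational ingredients — the running-maximum bound on the sum and the quadratic inequality — applied once to $m_k = \max_{0\le j\le k} a_j$ rather than case-by-case along the induction. What your version buys is a cleaner, shorter write-up with no induction bookkeeping and the mildly stronger-looking conclusion $\max_{0\le j\le k} a_j \le b_k + ck$ (which is in fact equivalent to the stated conclusion, since $b_j + cj \le b_k + ck$ for $j\le k$); what the paper's version buys is that each step is a one-line elementary estimate requiring no discussion of quadratic roots. Either proof is acceptable.
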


\begin{proof}
We show the result by induction on $k$. The result is trivial for $k =0$ since the given condition 
with $k=0$ gives $a_0 \le b_0$. Assume that the result is valid for all $0\le k \le l$ for 
some $l\ge 0$. We show it is also true for $k = l+1$. If $a_{l+1} \le \max\{a_0, \cdots, a_l\}$, 
then $a_{l+1} \le a_j$ for some $0\le j\le l$. Thus, by the induction hypothesis and the 
monotonicity of $\{b_k\}$ we have
$$
a_{l+1} \le a_j \le b_j + c j \le b_{l+1} + c (l+1).
$$
If $a_{l+1} > \max\{a_0, \cdots, a_l\}$, then
\begin{align*}
a_{l+1}^2 \le b_{l+1}^2 + c \sum_{j=1}^{l+1} a_j \le b_{l+1}^2 + c(l+1) a_{l+1}
\end{align*}
which implies that
\begin{align*}
\left(a_{l+1} - \frac{1}{2} c (l+1)\right)^2
& = a_{l+1}^2 - c (l+1) a_{l+1} + \frac{1}{4} c^2 (l+1)^2\\
& \le b_{l+1}^2 + \frac{1}{4} c^2 (l+1)^2 \\
& \le \left(b_{l+1} + \frac{1}{2} c (l+1)\right)^2.
\end{align*}
Taking square roots shows $a_{l+1} \le b_{l+1} + c (l+1)$ again. 
\end{proof}

\begin{lemma}\label{lem11}
There hold
\begin{align}\label{ip.14}
\Phi_k^{1/2} \le \Phi_0^{1/2} + \sqrt{2 \rho_1} k \d, \quad \forall k \ge 0 
\end{align}
and 
\begin{align}\label{ip.15}
E_k \le \frac{2\Phi_0}{k} + \frac{5}{2} \rho_1 k \d^2, \quad \forall k \ge 1.
\end{align}
\end{lemma}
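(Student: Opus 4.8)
The plan is to prove the two estimates of Lemma~\ref{lem11} in order, deriving the first from the calculus Lemma~\ref{lem4} and the second from the first together with the monotonicity of $\{E_k\}$ recorded in Lemma~\ref{lem:mono}.

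For the bound (\ref{ip.14}) I would simply invoke Lemma~\ref{lem4} with $a_k := \Phi_k^{1/2}$, $b_k := \Phi_0^{1/2}$ and $c := \sqrt{2\rho_1}\,\d$. Since $\{b_k\}$ is constant it is non-decreasing, and the hypothesis $a_k^2 \le b_k^2 + c\sum_{j=1}^k a_j$ is exactly the estimate (\ref{ip.13}) from Lemma~\ref{lem12} (after a shift of index; for $k=0$ it reduces to the trivial $\Phi_0\le\Phi_0$). The conclusion $a_k\le b_k + ck$ of Lemma~\ref{lem4} is then precisely (\ref{ip.14}).

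For the bound (\ref{ip.15}) I would start from the one-step inequality (\ref{ip.12}), rewritten with running index $j$, and sum it over $j=0,1,\dots,k-1$. The differences $\Phi_j-\Phi_{j+1}$ telescope, so this gives $\sum_{i=1}^k E_i \le \Phi_0 - \Phi_k + \sqrt{2\rho_1}\,\d\sum_{i=1}^k \Phi_i^{1/2} \le \Phi_0 + \sqrt{2\rho_1}\,\d\sum_{i=1}^k\Phi_i^{1/2}$. By Lemma~\ref{lem:mono} the sequence $\{E_i\}$ is non-increasing, hence $kE_k \le \sum_{i=1}^k E_i$. Plugging the already-proved estimate (\ref{ip.14}) into the sum and using $\sum_{i=1}^k i = k(k+1)/2$ yields $kE_k \le \Phi_0 + k\,\d\sqrt{2\rho_1\Phi_0} + \rho_1\d^2 k(k+1)$.

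To finish I would divide by $k$, bound $k+1\le 2k$ for $k\ge1$, and absorb the cross term using the weighted Young inequality $\d\sqrt{2\rho_1\Phi_0} \le \Phi_0/k + \tfrac12\rho_1 k\d^2$; this produces exactly $E_k \le 2\Phi_0/k + \tfrac52\rho_1 k\d^2$. I do not expect any genuine obstacle here: all the analytic content (the recursions in Lemma~\ref{lem12} and the monotonicity in Lemma~\ref{lem:mono}) is already available, so the argument is essentially bookkeeping of constants. The only mildly delicate point is that the coefficients $2$ and $\tfrac52$ appearing in the statement are calibrated so that the Young step closes; one should also be careful with the summation ranges in the telescoping step.
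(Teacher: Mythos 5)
Your proof is correct and follows essentially the same route as the paper: Lemma~\ref{lem4} applied to (\ref{ip.13}) with $a_k=\Phi_k^{1/2}$, $b_k=\Phi_0^{1/2}$, $c=\sqrt{2\rho_1}\,\d$ for the first bound, and summation of (\ref{ip.12}), monotonicity of $\{E_k\}$ and insertion of (\ref{ip.14}) for the second, with the same constants emerging. The only slip is a citation: the monotonicity of $\{E_k\}$ is Lemma~\ref{lem1.1} (the statement for the scheme (\ref{PADMM2})), not Lemma~\ref{lem:mono}, which concerns $\|\Delta u^k\|_G^2$ for the optimization ADMM (\ref{alg1}).
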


\begin{proof} 
Based on (\ref{ip.13}), we may use Lemma \ref{lem4} with $a_k = \Phi_k^{1/2}$, $b_k = \Phi_0^{1/2}$ 
and $c = (2\rho_2)^{1/2} \d$ to obtain (\ref{ip.14}) directly. 
Next, by using the monotonicity of $\{E_k\}$, (\ref{ip.12}) and (\ref{ip.14}) we have 
\begin{align*}
k E_k &\le \sum_{j=1}^k E_j 
\le \sum_{j=1}^k \left(\Phi_{j-1} - \Phi_j + \sqrt{2\rho_1 \Phi_j}\d \right) \\
& \le \Phi_0 - \Phi_k + \sum_{j=1}^k \sqrt{2\rho_1 \Phi_j} \d \\
& \le \Phi_0 + \sum_{j=1}^k \sqrt{2\rho_1} \left(\sqrt{\Phi_0} + \sqrt{2\rho_1} j \d\right) \d \\
& =  \Phi_0 + \sqrt{2\rho_1 \Phi_0} k \d + \rho_1 k(k+1) \d^2 \\
& \le  2 \Phi_0 + \frac{5}{2} \rho_1 k^2 \d^2
\end{align*}
which shows (\ref{ip.15}). 
\end{proof}

Now we are ready to complete the proof of Theorem \ref{thm:ip.admm}.

\begin{proof}[Proof of Theorem \ref{thm:ip.admm}]
Let $k_\d$ be an integer such that $k_\d \sim \d^{-1}$. From (\ref{ip.14}) and (\ref{ip.15}) 
in Lemma \ref{lem11} it follows that  
\begin{align}\label{ip.16}
E_{k_\d} \le C_0 \d \quad \mbox{and} \quad \Phi_k \le C_1 \mbox{ for all } k\le k_\d,
\end{align}
where $C_0$ and $C_1$ are constants independent of $k$ and $\d$. In order to use (\ref{ip.11}) 
in Lemma \ref{lem12} to estimate $\|y^{k_\d} - y^\dag\|$, we first consider $\Phi_k - \Phi_{k+1}$
for all $k\ge 0$. By using the definition of $\Phi_k$ and the inequality 
$\|u\|^2 - \|v\|^2 \le (\|u\| + \|v\|) \|u - v\|$, we have for $k \ge 0$ that  
\begin{align*}
\Phi_{k} - \Phi_{k+1}  
& \le \frac{1}{2\rho_1} \left(\|\la^{k} - \la^\dag\| + \|\la^{k+1} - \la^\dag\|\right) \|\Delta \la^{k+1}\| \\
& \quad \ + \frac{1}{2\rho_2} \left(\|\mu^{k} - \mu^\dag\| + \|\mu^{k+1} - \mu^\dag\|\right)\|\Delta \mu^{k+1}\| \\ 
& \quad \ + \frac{1}{2\rho_3} \left(\|\nu^{k} - \nu^\dag\| + \|\nu^{k+1} - \nu^\dag\|\right) \|\Delta \nu^{k+1}\| \\
& \quad \ + \frac{1}{2}\left( \|z^k - x^\dag\|_Q + \|z^{k+1}-x^\dag\|_Q\right)\|\Delta z^{k+1}\|_Q \\ 
& \quad \ + \frac{\rho_2}{2} \left(\|y^k - y^\dag\| + \|y^{k+1} - y^\dag\|\right) \|\Delta y^{k+1}\| \\
& \quad \ + \frac{\rho_3}{2} \left(\|x^k - x^\dag\| + \|x^{k+1} - x^\dag\|\right) \|\Delta x^{k+1}\|.
\end{align*}
By virtue of the Cauchy-Schwarz inequality and the inequality $(a+b)^2 \le 2(a^2 + b^2)$ for any numbers 
$a, b \in {\mathbb R}$ we can further obtain 
\begin{align*}
\Phi_k - \Phi_{k+1} \le \sqrt{2(\Phi_k + \Phi_{k+1}) E_{k+1}}, \quad \forall k\ge 0. 
\end{align*}
This together with (\ref{ip.16}) in particular implies 
$$
\Phi_{k_\d-1} - \Phi_{k_\d} \le \sqrt{4 C_0 C_1 \d}.  
$$
Therefore, it follows from (\ref{ip.11}) that  
\begin{align*}
\sigma_f \|y^{k_\d}-y^\dag\|^2 
& \le \Phi_{k_\d-1} - \Phi_{k_\d} + \|\la^{k_\d}-\la^\dag\| \d \\
& \le \sqrt{4 C_0C_1 \d} + \sqrt{2\rho_1 \Phi_{k_\d}} \d \\
& \le \sqrt{4 C_0 C_1 \d} + \sqrt{2\rho_1 C_1} \d.
\end{align*}
Thus 
\begin{align*}
\|y^{k_\d} - y^\dag\|^2 \le C_2 \d^{1/2},
\end{align*}
where $C_2$ is a constant independent of $\d$ and $k$. By using the estimate 
$E_{k_\d} \le C_0 \d$ in (\ref{ip.16}), the definition of $E_{k_\d}$, and the last 
three equations in (\ref{PADMM2}), we can see that 
\begin{align*}
& \|A z^{k_\d}- b^\d\|^2 = \frac{1}{\rho_1^2} \|\Delta \la^{k_\d}\|^2 
\le \frac{2}{\rho_1} E_{k_\d} \le \frac{2 C_0}{\rho_1} \d,\\
& \|L z^{k_\d} - y^{k_\d}\|^2 = \frac{1}{\rho_2^2} \|\Delta \mu^{k_\d}\|^2 
\le \frac{2}{\rho_2} E_{k_\d} \le \frac{2C_0}{\rho_2} \d,\\
& \|z^{k_\d} - x^{k_\d}\|^2 = \frac{1}{\rho_3^2} \|\Delta \nu^{k_\d}\|^2 
\le \frac{2}{\rho_3} E_{k_\d} \le \frac{2 C_0}{\rho_3} \d. 
\end{align*}
Therefore 
\begin{align*}
&\|L(z^{k_\d}-x^\dag)\|^2 \le 2\left(\|L z^{k_\d} - y^{k_\d}\|^2 + \|y^{k_\d} - y^\dag\|^2\right) 
\le \frac{4 C_0}{\rho_2} \d + 2 C_2 \d^{1/2}, \\
& \|A(z^{k_\d}- x^\dag)\|^2 \le 2\left(\|A z^{k_\d} - b^\d\|^2 + \|b^\d - b\|^2\right) 
\le 2 \left(\frac{2 C_0}{\rho_1} + 1\right) \d.
\end{align*}
By virtue of (iii) in Assumption \ref{ass:ADMM1} on $A$ and $L$ we thus obtain 
\begin{align*}
c_0 \|z^{k_\d}-x^\dag\|^2 
& \le \|A(z^{k_\d} - x^\dag)\|^2 + \|L(z^{k_\d}- x^\dag)\|^2 \\
& \le 2 \left(\frac{2 C_0}{\rho_1} + \frac{2 C_0}{\rho_1} + 1\right) \d + 2 C_2 \d^{1/2}. 
\end{align*}
This means there is a constant $C_3$ independent of $\d$ and $k$ such that 
\begin{align*}
\|z^{k_\d} - x^\dag\|^2 \le C_3 \d^{1/2}. 
\end{align*}
Finally we obtain 
$$
\|x^{k_\d} - x^\dag\|^2 \le 2\left(\|x^{k_\d}- z^{k_\d}\|^2 + \|z^{k_\d} - x^\dag\|^2\right) 
\le \frac{4 C_0}{\rho_3} \d + 2 C_3 \d^{1/2}. 
$$
The proof is thus complete. 
\end{proof}

\begin{remark}
Under the benchmark source condition (\ref{ip.2}), we have obtained in Theorem 
\ref{thm:ip.admm} the convergence rate $O(\d^{1/4})$ for the proximal ADMM (\ref{PADMM2}). 
This rate is not order optimal. It is not yet clear if the order optimal rate $O(\d^{1/2})$ 
can be achieved. 
\end{remark}

\begin{remark}
When using the proximal ADMM to solve (\ref{ip1.2}) with $L = I$, i.e.
\begin{align}\label{ip}
\min\left\{f(x): A x = b \mbox{ and } x \in \C\right\}, 
\end{align}
it is not necessary to introduce the $y$-variable as is done in (\ref{PADMM2})
and thus (\ref{PADMM2}) can be simplified to the scheme
\begin{align}\label{PADMM2.5}
\begin{split}
& z^{k+1} = \arg\min_{z\in \X}
\left\{{\mathscr L}_{\rho_1, \rho_2}(z, x^k, \la^k, \nu^k) + \frac{1}{2} \|z-z^k\|_Q^2\right\},\\
& x^{k+1} = \arg\min_{x\in \X} 
\left\{{\mathscr L}_{\rho_1, \rho_2}(z^{k+1}, x, \la^k, \nu^k)\right\}, \\
& \la^{k+1} = \la^k + \rho_1 (A z^{k+1} - b^\d), \\
& \nu^{k+1} = \nu^k + \rho_2 (z^{k+1} - x^{k+1}),
\end{split}
\end{align}
where
$$
{\mathscr L}_{\rho_1, \rho_2}(z, x, \la, \nu) 
:= f(z) + \iota_\C(x) + \l \la, A z- b^\d\r + \l \nu, z-x\r 
+ \frac{\rho_1}{2}\|A z- b^\d\|^2 + \frac{\rho_2}{2} \|z-x\|^2.
$$
The source condition (\ref{ip1.2}) reduces to the form 
\begin{align}\label{sc}
\exists \mu^\dag \in \p f(x^\dag) \mbox{ and } \nu^\dag \in \p \iota_\C(x^\dag) 
\mbox{ such that } \mu^\dag + \nu^\dag \in \mbox{Ran}(A^*).
\end{align}
If the unique solution $x^\dag$ of (\ref{ip}) satisfies the source condition (\ref{sc}), 
one may follow the proof of Theorem \ref{thm:ip.admm} with minor modification
to deduce for the method (\ref{PADMM2.5}) that 
$$
\|x^{k_\d} - x^\dag\| = O(\d^{1/4}) \quad \mbox{ and } \quad 
\|z^{k_\d} - x^\dag\| = O(\d^{1/4})
$$
whenever the integer $k_\d$ is chosen such that $k_\d \sim \d^{-1}$. 
\end{remark}

We conclude this section by presenting a numerical result to illustrate the 
semi-convergence property of the proximal ADMM and the convergence rate. We consider 
finding a solution of (\ref{ip1.1}) with minimal norm. This is equivalent to solving 
(\ref{ip}) with $f(y) = \frac{1}{2} \|y\|^2$. With a noisy data $b^\d$ satisfying 
$\|b^\d - b\| \le \d$, the corresponding proximal ADMM (\ref{PADMM2.5}) takes the form 
\begin{align}\label{PADMM2.6}
\begin{split}
z^{k+1} &= \left((1 + \rho_2) I + Q  + \rho_1 A^*A\right)^{-1} 
\left(\rho_1 A^* b^\d + \rho_2 x^k + Q z^k - A^* \la^k - \nu^k\right), \\
x^{k+1} &= P_\C\left(z^{k+1} + \nu^k/\rho_2\right), \\
\la^{k+1} & = \la^k + \rho_1(A z^{k+1} - b^\d), \\
\nu^{k+1} & = \nu^k + \rho_2(z^{k+1} - x^{k+1}), 
\end{split}
\end{align}
where $P_\C$ denotes the orthogonal projection of $\X$ onto $\C$. The source condition (\ref{sc}) 
now takes the form 
\begin{align}\label{sc.2}
\exists \nu^\dag \in \p \iota_\C (x^\dag) \mbox{ such that } x^\dag + \nu^\dag \in \mbox{Ran}(A^*)
\end{align}
which is equivalent to the projected source condition $x^\dag \in P_\C(\mbox{Ran}(A^*))$. 

\begin{example}
In our numerical simulation we consider the first kind integral equation
\begin{align}\label{IE}
(A x)(t) := \int_0^1 \kappa(s, t) x(s) ds = b(t), \quad t \in [0,1]
\end{align}
on $L^2[0,1]$, where the kernel $\kappa$ is continuous on $[0,1]\times [0,1]$. Such 
equations arise naturally in many linear ill-posed inverse problems, see 
\cite{EHN1996,G1984}. Clearly $A$ is a compact linear operator from $L^2[0,1]$ 
to itself. We will use 
$$
\kappa(s, t) = d \left(d^2 + (s-t)^2\right)^{-3/2}
$$
with $d = 0.1$. The corresponding equation is a 1-D model problem in gravity 
surveying. Assume the equation (\ref{IE}) has a nonnegative solution. We will 
employ the method (\ref{PADMM2.6}) to determine the unique nonnegative solution 
of (\ref{IE}) with minimal norm in case the data is corrupted by noise.  
Here $\C:=\{x \in L^2[0,1]: x\ge 0 \mbox{ a.e.}\}$ and thus $P_\C(x) = \max\{x, 0\}$. 

\begin{figure}[ht]
        \centering
            \includegraphics[width=0.31\textwidth]{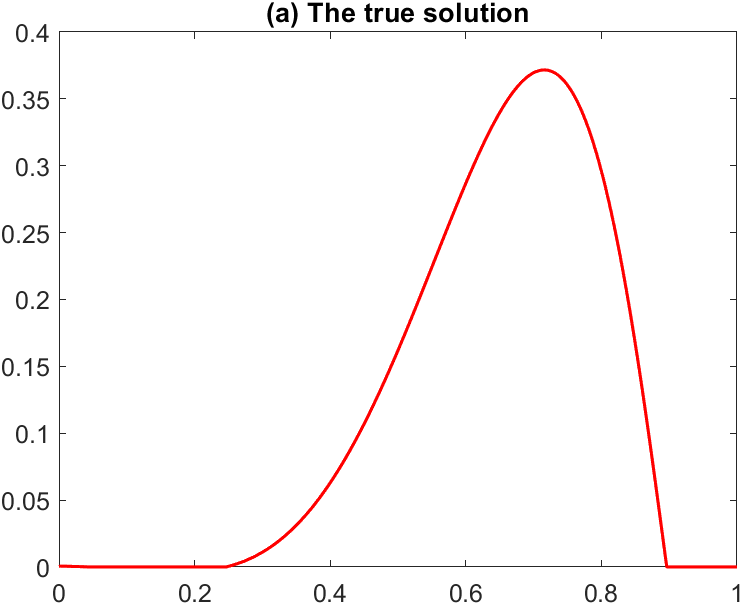}
            \includegraphics[width=0.31\textwidth]{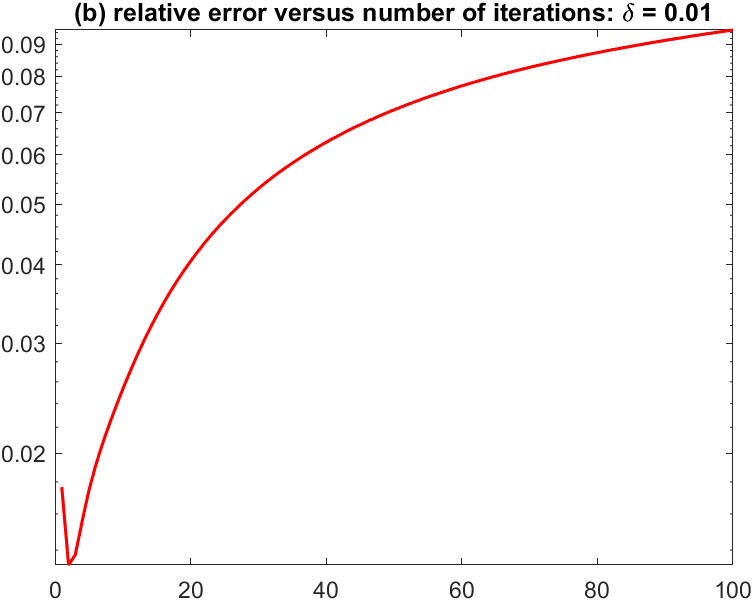}
            \includegraphics[width=0.31\textwidth]{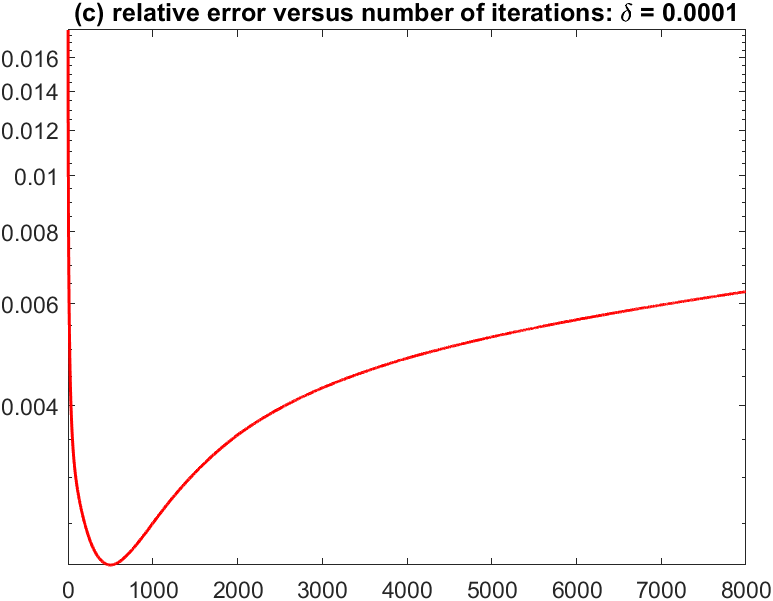}
        \caption{(a) plots the true solution $x^\dag$, (b) and (c) plot the relative errors 
        versus the number of iterations for the method (\ref{PADMM2.6}) using noisy data 
        with noise level $\d = 10^{-2}$ and 
        $10^{-4}$ respectively}\label{fig1}
\end{figure}

In order to investigate the convergence rate of the method, we generate our data as 
follows. First take $\omega^\dag \in L^2[0,1]$, set $x^\dag := \max\{A^* \omega^\dag, 0\}$ 
and define $b := A x^\dag$. Thus $x^\dag$ is a nonnegative solution of $A x = b$ satisfying 
$x^\dag = P_\C(A^* \omega^\dag)$, i.e. the source condition (\ref{sc.2}) holds. 
We use $\omega^\dag = t^3(0.9-t)(t-0.35)$, the corresponding $x^\dag$ is plotted 
in Figure \ref{fig1} (a). We then pick a random data $\xi$ with $\|\xi\|_{L^2[0,1]} = 1$ 
and generate the noisy data $b^\d$ by $b^\d := b + \d \xi$. Clearly 
$\|b^\d - b\|_{L^2[0,1]} = \d$. 

\begin{table}[ht]
\caption{Numerical results for the method (\ref{PADMM2.6}) using noisy data with diverse 
noise levels, where $\texttt{err}_{\min}$ and $\texttt{iter}_{\min}$ denote respectively 
the the smallest relative error and the required  number of iterations. } \label{table1}
\begin {center}
\begin{tabular}{|c|c|c|c|c|c|c|}
     \hline
 $\d$    &$\texttt{err}_{\min}$& $\texttt{iter}_{\min}$ &  $\texttt{err}_{\min}/\d^{1/2}$ & $\texttt{err}_{\min}/\d^{1/4}$  \\
\hline
1e-1     & 4.9307e-2  & 1      & 0.155922 & 0.087681 \\  
1e-2     & 1.3255e-2  & 2      & 0.132553 & 0.041917 \\ 
1e-3     & 5.2985e-3  & 19     & 0.167552 & 0.029796 \\ 
1e-4     & 2.1196e-3  & 501    & 0.211957 & 0.021196 \\ 
1e-5     & 7.2638e-4  & 2512   & 0.229702 & 0.012917 \\ 
1e-6     & 2.7450e-4  & 31447  & 0.274496 & 0.008680 \\ 
1e-7     & 7.4693e-5  & 329542 & 0.236199 & 0.004200 \\ 
\hline
\end{tabular}\\[5mm]
\end{center}
\end{table}

For numerical implementation, we discretize the equation by the trapzoidal rule based on 
partitioning $[0, 1]$ into $N-1$ subintervals of equal length with $N = 600$. We then 
execute the method (\ref{PADMM2.6}) with $Q =0$, $\rho_1 = 10$, $\rho_2=1$ and the 
initial guess $x^0 = \la^0 = \nu^0 = 0$ using the noisy data $b^\d$ for several distinct 
values of $\d$. In Figure \ref{fig1} (b) and (c) we plot the relative error 
$\|x^k- x^\dag\|_{L^2}/\|x^\dag\|_{L^2}$ versus $k$, the number of iterations, 
for $\d = 10^{-2}$ and $\d = 10^{-4}$ respectively. These plots demonstrate that 
the proximal ADMM always exhibits the semi-convergence phenomenon when used to solve 
ill-posed problems, no matter how small the noise level is. Therefore, properly 
terminating the iteration is important to produce useful approximate solutions. 
This has been done in \cite{JJLW2016,JJLW2017}. 

In Table \ref{table1} we report further numerical results. For the noisy data $b^\d$ 
with each noise level $\d = 10^{-i}$, $i = 1, \cdots, 7$, we execute the method and 
determine the smallest relative error, denoted by $\texttt{err}_{\min}$, and the 
required  number of iterations, denoted by $\texttt{iter}_{\min}$. The ratios 
$\texttt{err}_{\min}/\d^{1/2}$ and $\texttt{err}_{\min}/\d^{1/4}$ are then calculated. 
Since $x^\dag$ satisfies the source condition (\ref{sc.2}), our theoretical result 
predicts the convergence rate $O(\d^{1/4})$. However, Table \ref{table1} illustrates 
that the value of $\texttt{err}_{\min}/\d^{1/2}$ does not change much while the value of 
$\texttt{err}_{\min}/\d^{1/4}$ tends to decrease to 0 as $\d \to 0$. This strongly 
suggests that the proximal ADMM admits the order optimal convergence rate $O(\d^{1/2})$ 
if the source condition (\ref{sc.2}) holds. However, how to derive this order optimal 
rate remains open. 
\end{example}






\begin{thebibliography}{999}

\bibitem{AB2009} H. Attouch and J. Bolte, {\it On the convergence of the proximal algorithm 
for nonsmooth functions involving analytic features}, Math. Program., Ser. B, 116 (2009), 5--16.

\bibitem{AS2009b} H. Attouch and M. Soueycatt, {\it Augmented Lagrangian 
and proximal alternating direction methods of multipliers in Hilbert spaces. Applications to games, PDE’s and control}, Pac. J. Optim., 5 (2009), no. 1,  17--37.

\bibitem{BC2011} H. H. Bauschke and P. L. Combettes, {\it Convex Analysis and Monotone 
Operator Theory in Hilbert Spaces}, Springer, New York, 2011.

\bibitem{B2017} A. Beck, {\it First-Order Methods in Optimization}, MOS-SIAM Series on Optimization.
Society for Industrial and Applied Mathematics, 2017.

\bibitem{BPCPE2011} S. Boyd, N. Parikh, E. Chu, B. Peleato, and J. Eckstein, {\it Distributed optimization and statistical
learning via the alternating direction method of multipliers}, Foundations and Trends
in Machine Learning, 3(1): 1--122, 2011.

\bibitem{BS2017} K. Bredies and H. Sun, {\it A proximal point analysis of the preconditioned 
alternating direction method of multipliers}, J. Optim. Theory Appl., 173 (2017), no. 3, 878--907.

\bibitem{BO2004}  M. Burger and S. Osher, {\it Convergence rates of convex variational regularization}, 
Inverse Problems 20 (2004), no. 5, 1411--1421.

\bibitem{DY2016} D. Davis and W. Yin, {\it Convergence rate analysis of several splitting schemes}, Splitting methods in communication, imaging, science, and engineering, 115--163, Sci. Comput., Springer, Cham, 2016.

\bibitem{DY2016b} W. Deng and W. Yin, {\it On the global and linear convergence of the generalized alternating
direction method of multipliers},  J. Sci. Comput., 66 (2016), 889--916.

\bibitem{EP1992} J. Eckstein and D. P. Bertsekas, {\it On the douglas-rachford splitting method and the proximal
point algorithm for maximal monotone operators}, Math. Program., 55(1-3): 293--318, 1992.

\bibitem{ET1976} I. Ekeland and R. Temam, {\it Convex Analysis and Variational Problems},  
Amsterdam: North Holland, 1976. 

\bibitem{EHN1996} H. W. Engl, M. Hanke and A. Neubauer, {\it Regularization of Inverse Problems}, Dordrecht,
Kluwer, 1996.

\bibitem{FS2010}
K. Frick and O. Scherzer, {\it Regularization of ill-posed linear equations by the non-stationary
augmented Lagrangian method}, J. Integral Equ. Appl., 22 (2010), no. 2, 217--257.

\bibitem{G1983} D. Gabay, {\it Applications of the method of multipliers to variational inequalities}, Studies in
Mathematics and its Applications, 15, 299--331, 1983.

\bibitem{GM1976} D. Gabay and B. Mercier, {\it A dual algorithm for the solution of nonlinear variational problems via finite
element approximation}, Comput. Math. Appl.,  2 (1976), 17--40.

\bibitem{GM1975} R. Glowinski and A. Marrocco, {\it Sur l'approximation, par \'{e}l\'{e}ments finis d'ordre un,
et la r\'{e}solution, par p\'{e}nalisation-dualit\'{e}, d'une classe de probl\'{e}mes de Dirichlet non lin\'{e}aires},
ESAIM: Math. Model. Numer. Anal., 9(R2):41--76, 1975.

\bibitem{GBO2010} T. Goldstein, X. Bresson and S. Osher, {\it Geometric applications 
of the split Bregman method: segmentation and surface reconstruction}, J. Sci. Comput.,
45 (2010), no. 1, 272--293.

\bibitem{G1984} C. W. Groetsch, {\it The theory of Tikhonov regularization for Fredholm 
equations of the first kind}, Research Notes in Mathematics, 105. Pitman (Advanced 
Publishing Program), Boston, MA, 1984.

\bibitem{HY2012} B. He and X. Yuan, {\it On the $O(1/n)$ convergence rate of Douglas–Rachford alternating direction
method}, SIAM J. Numer. Anal.,  50 (2012), 700--709.

\bibitem{HY2015} B. He and X. Yuan, {\it  On non-ergodic convergence rate of Douglas–Rachford
alternating direction method of multipliers}, Numer. Math., 130 (2015), 567--577.

\bibitem{JJLW2016} Y. Jiao, Q. Jin, X. Lu and W. Wang, {\it Alternating direction method of multipliers 
for linear inverse problems}, SIAM J. Numer. Anal., 54 (2016), no. 4, 2114--2137.

\bibitem{JJLW2017} Y. Jiao, Q. Jin, X. Lu and W. Wang, {\it Preconditioned alternating direction method of multipliers for
solving inverse problems with constraints}, Inverse Problems, 33 (2017), 025004 (34pp).

\bibitem{Jin2022} Q.  Jin, {\it Convergence rates of a dual gradient method for constrained 
linear ill-posed problems}, Numer. Math. 151 (2022), no. 4, 841--871. 

\bibitem{LP2022} J. H. Lee and T. S. Pham, {\it Openness, H\"{o}lder metric regularity, and H\"{o}lder 
continuity properties of semialgebraic set-valued maps}, SIAM J. Optim., 32 (2022), no. 1, pp. 56--74.

\bibitem{LMZ2015} T. Lin, S. Ma and S. Zhang, {\it On the Sublinear Convergence Rate
of Multi-Block ADMM},  J. Oper. Res. Soc. China, 3 (2015), no. 3, 251--274. 

\bibitem{LM1979} P. L. Lions and B. Mercier, {\it Splitting algorithms for the sum of two nonlinear operators}, SIAM
J. Numer. Anal., 16(6), 964--979, 1979.

\bibitem{LYZZ2018} Y. Liu, X. Yuan, S. Zeng and J. Zhang, {\it Partial error bound conditions 
and the linear convergence rate of the alternating direction method of multipliers}, 
SIAM J. Numer. Anal., 56 (2018), no. 4, 2095--2123. 

\bibitem{N2001} F. Natterer,{\it  The Mathematics of Computerized Tomography}, SIAM, Philadelphia, 2001.

\bibitem{RS2006} E. Resmerita and O. Scherzer, {\it Error estimates for non-quadratic regularization 
and the relation to enhancement}, Inverse Problems, 22 (2006), no. 3, 801--814.

\bibitem{R1981} S. M. Robinson, {\it Some continuity properties of polyhedral multifunctions}, Math. Program.
Study, 14 (1981), pp. 206--214.

\bibitem{Sun2019} H. Sun, {\it Analysis of Fully Preconditioned Alternating Direction Method 
of Multipliers with Relaxation in Hilbert Spaces}, J. Optim. Theory Appl., 183 (2019), no. 1, 199--229.

\bibitem{YH2016} W. H. Yang and D. R. Han, {\it Linear convergence of alternating direction method of multipliers for a class of
convex optimization problems}, SIAM J. Numer. Anal., 54 (2016), no. 2, 625--640.

\bibitem{ZBO2011} X. Zhang, M. Burger and S. Osher, {\it A unified primal–dual algorithm framework based on Bregman iteration},
J. Sci. Comput., 46 (2011), 20--46.

\bibitem{ZN2014} X. Y. Zheng and K. F. Ng, {\it Metric subregularity of piecewise linear multifunctions 
and applications to piecewise linear multiobjective optimization}, SIAM J. Optim., 24 (2014), no. 1, 154--174.
\end{thebibliography}
\end{document}